\providecommand{\U}[1]{\protect\rule{.1in}{.1in}}
\providecommand{\U}[1]{\protect\rule{.1in}{.1in}}
\providecommand{\U}[1]{\protect\rule{.1in}{.1in}}
\newtheorem{theorem}{Theorem}[section]
\newtheorem{corollary}[theorem]{Corollary}
\newtheorem{lemma}[theorem]{Lemma}
\newtheorem{proposition}[theorem]{Proposition}
\newenvironment{proof}[1][Proof]{\textbf{#1.} }{\ \rule{0.5em}{0.5em}}
\begin{document}

\title{Pl\"{u}nnecke and Kneser type theorems for dimension estimates}
\date{}
\author{C\'{e}dric Lecouvey}
\maketitle

\begin{abstract}
Given a division ring $K$ containing the field $\mathrm{k}$ in its center and
two finite subsets $A$ and $B$ of $K^{\ast}$, we give some analogues of
Pl\"{u}nnecke and Kneser Theorems for the dimension of the $\mathrm{k}%
$-linear span of the Minkowski product $AB$ in terms of the dimensions of the
$\mathrm{k}$-linear spans of $A$ and $B$.\ We also explain how they imply the
corresponding more classical theorems for abelian groups. These Pl\"{u}nnecke
type estimates are then generalized to the case of associative algebras. We
also obtain an analogue in the context of division rings of a theorem by Tao
describing the sets of small doubling in a group.

\noindent AMS classification: 05E15, 12E15, 11P70.

\noindent Keywords: division ring, Kneser's theorem, Pl\"{u}nnecke-Ruzsa's inequalities.

\end{abstract}

\section{Introduction}

A classical problem in additive number theory is to evaluate the cardinality
of sumsets in $\mathbb{Z}$ in terms of the cardinality of their summands. Many
results and methods used to obtain such evaluations are in fact also suited
for studying the cardinality of any sumset in abelian groups. In this paper,
we will write group operations multiplicatively.\ Among numerous interesting
results in this area are the Pl\"{u}nnecke-Ruzsa and Kneser theorems.

\noindent Pl\"{u}nnecke-Ruzsa's theorem gives an upper bound for the
cardinality of $A^{n}$ knowing such a bound for $\left|  A^{2}\right|  $.

\begin{theorem}
\label{Th_plum}Let $A$ and $B$ be two finite subsets in an abelian group $G$.
Assume $\alpha$ is a positive real such that $\left\vert AB\right\vert
\leq\alpha\left\vert A\right\vert $. Then there exists a subset $X$ of $A$
such that, for any integer $n$, $\left\vert XB^{n}\right\vert \leq\alpha
^{n}\left\vert X\right\vert $.\ In particular $\left\vert A^{2}\right\vert
\leq\alpha\left\vert A\right\vert $ implies that $\left\vert A^{n}\right\vert
\leq\alpha^{n}\left\vert A\right\vert $.
\end{theorem}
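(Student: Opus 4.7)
The plan is to follow Petridis's streamlined approach (2012), which avoids Pl\"{u}nnecke's original graph-theoretic machinery and makes the proof essentially a one-step inclusion-exclusion argument. First, I would choose a nonempty $X \subseteq A$ that minimizes the ratio $K := |XB|/|X|$ among all nonempty subsets of $A$; since $A$ itself is a candidate, $K \leq \alpha$. The minimality property then guarantees that $|X'B| \geq K|X'|$ for every nonempty $X' \subseteq X$, and this is the single fact that will be used throughout.

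The heart of the proof is the following \emph{Petridis Lemma}: for every finite nonempty $C \subseteq G$,
\[
|CXB| \leq K\, |CX|.
\]
I would prove this by induction on $|C|$. The case $|C|=1$ is the definition of $K$. For the inductive step, write $C = C_0 \cup \{c\}$ with $c \notin C_0$ and set $Y := \{x \in X : cx \in C_0 X\}$, so that $cY \subseteq C_0 X$ and therefore $cYB \subseteq C_0 XB$. Since left multiplication by $c$ is injective, $cX \setminus C_0 X = c(X \setminus Y)$, which gives $|CX| = |C_0 X| + |X| - |Y|$. By inclusion-exclusion,
\[
|CXB| = |C_0 XB| + |cXB| - |C_0 XB \cap cXB| \leq K|C_0 X| + K|X| - |YB|,
\]
using the induction hypothesis on $C_0$, the equality $|cXB|=|XB|=K|X|$, and the inclusion $cYB \subseteq C_0 XB \cap cXB$. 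Finally, the minimality of $K$ applied to the subset $Y \subseteq X$ yields $|YB| \geq K|Y|$, and substitution gives exactly $|CXB| \leq K|CX|$.

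With the lemma available, the theorem follows by induction on $n$. Taking $C = B^{n-1}$ (which is legitimate here because the group is abelian, so $B^{n-1}X \cdot B = XB^{n}$) one obtains
\[
|XB^{n}| = |B^{n-1}X B| \leq K\, |B^{n-1} X| = K\, |XB^{n-1}|,
\]
and iterating yields $|XB^{n}| \leq K^{n}|X| \leq \alpha^{n} |X|$, as required.

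The main obstacle, and essentially the only nonroutine ingredient, is the Petridis Lemma. The delicate point is the construction of the set $Y$ together with the observation that $cYB$ sits inside the intersection $C_{0}XB \cap cXB$: this is the mechanism that converts the \emph{extremal} choice of $X$ (which is only an inequality in one direction) into the desired upper bound for all iterated products. Once this is in place, the rest is bookkeeping and the use of commutativity to re-order $XB^n = B^{n-1}XB$.
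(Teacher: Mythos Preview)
Your proof is correct; it is exactly Petridis's argument carried out directly in the abelian group $G$, and every step (the extremal choice of $X$, the set $Y$, the inclusion $cYB\subseteq C_{0}XB\cap cXB$, and the use of minimality to bound $|YB|\geq K|Y|$) is sound.

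The paper does not give a self-contained proof of Theorem~\ref{Th_plum} in the group setting. Its route is instead: adapt Petridis's lemma to the \emph{linear} setting (Proposition~\ref{prop-CXB}), deduce the linear Pl\"{u}nnecke theorem (Theorem~\ref{Th_plunn}), and then recover Theorem~\ref{Th_plum} either via the field embedding of \S\ref{subsec-linkabelina} and Proposition~\ref{prop_link}, or via the group algebra $\mathbb{C}[G]$ as in \S\ref{Sec_alge}. The substance of the two approaches is identical---your sets $Y$ and $X\setminus Y$ correspond to the subspaces $V_{r}$ and $\mathrm{k}\langle X_{r}\rangle$ in the proof of Proposition~\ref{prop-CXB}, and your inclusion-exclusion identity $|CX|=|C_{0}X|+|X|-|Y|$ is the cardinality shadow of the direct-sum decomposition~(\ref{deco}). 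What you gain is directness; what the paper's detour buys is that the linear statement is strictly more general and the group case becomes a formal corollary.
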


\noindent Kneser's theorem gives a lower bound for the cardinality of the
product set $AB$ where $A$ and $B$ are finite nonempty subsets in an abelian
group $G$.

\begin{theorem}
Let $A$ and $B$ be finite subsets of the abelian group $G$. Write $H$ for the
stabilizer of $AB$ in $G$. Then%
\[
\left\vert AB\right\vert \geq\left\vert A\right\vert +\left\vert B\right\vert
-\left\vert H\right\vert .
\]

\end{theorem}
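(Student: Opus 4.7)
My approach is to prove the theorem by induction on $|B|$, after reducing to the case where $\mathrm{stab}(AB)$ is trivial. Setting $H = \mathrm{stab}(AB)$, observe that $H \cdot AB = AB$ forces $(HA)(HB) = AB$, with $HA$ and $HB$ unions of cosets of $H$. In the quotient group $G/H$, the images of $HA$ and $HB$ have product equal to the image of $AB$, whose stabilizer in $G/H$ is trivial. Consequently, if one proves
\[ \mathrm{stab}(AB) = \{1\} \implies |AB| \geq |A| + |B| - 1, \]
then applying this to the images in $G/H$ and multiplying through by $|H|$ yields $|AB| \geq |HA| + |HB| - |H| \geq |A| + |B| - |H|$, which is the desired bound.

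The engine of the proof is Dyson's $e$-transform. Given $e \in G$, define
\[ A' = A \cup eB, \qquad B' = B \cap e^{-1}A. \]
A routine inclusion-exclusion argument gives $|A'| + |B'| = |A| + |B|$, and using commutativity of $G$ one verifies $A'B' \subseteq AB$. With these tools in hand I induct on $|B|$. The base case $|B| = 1$ gives $|AB| = |A|$, which trivially satisfies the inequality. For the inductive step, after translating so that $1 \in A \cap B$, I pick $e \in A$. If there exists such an $e$ with $\emptyset \neq B' \subsetneq B$, the inductive hypothesis applied to $(A', B')$---combined with $A'B' \subseteq AB$ and $|A'|+|B'|=|A|+|B|$---should produce the desired inequality. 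Otherwise, $eB \subseteq A$ for every $e \in A$, so $AB \subseteq A$; since $1 \in B$ this forces $AB = A$, and then $B \subseteq \mathrm{stab}(A) = \mathrm{stab}(AB) = H$ gives $|B| \leq |H|$ and $|AB| = |A| \geq |A| + |B| - |H|$ directly.

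The main obstacle is the transition through the Dyson transform. The inductive hypothesis yields $|A'B'| \geq |A'| + |B'| - |\mathrm{stab}(A'B')|$, but there is no a priori reason that $|\mathrm{stab}(A'B')| \leq |\mathrm{stab}(AB)|$, since $A'B'$ is in general a strict subset of $AB$ and stabilizers need not behave monotonically under taking subsets. I would address this via a minimal-counterexample argument: among all pairs $(A, B)$ violating the inequality, choose one minimizing $|B|$ and then $|A|+|B|$. Dyson's transform then produces a strictly smaller pair that must satisfy the bound, and contradiction should emerge after careful bookkeeping of stabilizer sizes---possibly by iterating the quotient reduction on the transformed pair to return to the trivial-stabilizer regime where the Cauchy--Davenport-type inequality $|AB| \geq |A|+|B|-1$ closes the argument.
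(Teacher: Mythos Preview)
Your approach is entirely different from the paper's. The paper does not give a self-contained combinatorial proof of Kneser's theorem; instead it states the theorem in the introduction and later (Remark~2 after Theorem~\ref{Th_HL}) explains that it follows from the \emph{linear} Kneser theorem via the field-embedding construction of \S\ref{subsec-linkabelina}: one embeds the finitely generated abelian group $G$ into the multiplicative group of a suitable field $K$ so that group elements are $\mathbb{C}$-linearly independent, applies Theorem~\ref{Th_HL} to the images, and uses Proposition~\ref{prop_link} to convert the dimension inequality back into a cardinality inequality (with $\Phi(S(AB))=H_{\mathbb{C}}(AB)$ handling the stabilizer). So the paper's route is ``prove the linear version first, then specialize,'' whereas you attempt the classical direct argument with the Dyson transform.

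Your outline is the standard skeleton of a direct proof, but the step you flag as ``the main obstacle'' is a genuine gap, and the fix you propose does not close it. Concretely: after reducing to $\mathrm{stab}(AB)=\{1\}$ and applying the transform to obtain $(A',B')$ with $|B'|<|B|$, your induction hypothesis (applied in $G/H'$ with $H'=\mathrm{stab}(A'B')$) yields
\[
|AB|\ \ge\ |A'B'|\ \ge\ |A'|+|B'|-|H'|\ =\ |A|+|B|-|H'|.
\]
When $H'\neq\{1\}$ this is strictly weaker than the target $|AB|\ge|A|+|B|-1$, and nothing in your scheme forces $H'$ to be trivial or to stabilize $AB$. ``Iterating the quotient reduction on the transformed pair'' just reproduces the same bound $|A|+|B|-|H'|$; it does not improve it. The classical proofs that make the Dyson-transform strategy work require an additional idea at exactly this point---for instance, proving the stronger statement that if $|AB|<|A|+|B|$ then $|AB|=|HA|+|HB|-|H|$ with $H=\mathrm{stab}(AB)$, and then arguing that a nontrivial period $H'$ of $A'B'$ must actually propagate to a nontrivial period of $AB$ (using that $B\subseteq B(e)=A'$ and a careful choice of $e$), contradicting $\mathrm{stab}(AB)=\{1\}$. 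Without such an argument your induction does not close.
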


It is then natural to ask for analogous results in the general case of
possibly non abelian groups.\ The question of finding lower and upper bounds
for product sets in non abelian groups is considerably more difficult than in
the abelian case.\ Nevertheless, there is a growing literature on this subject
due to Diderrich \cite{Die}, Hamidoune \cite{Hami}, Kemperman \cite{Kem},
Olson \cite{Ols}, Ruzsa \cite{Ruz}, Tao \cite{Tao3} and many others. Let us
mention that Kneser's theorem does not hold for non abelian groups as noticed
in \cite{Kem}.\ Nevertheless, there exist in this case weaker versions due to
Diderrich \cite{Die} and Olson \cite{Ols}.

The Pl\"{u}nnecke-Ruzsa and Kneser theorems give informations on the structure
of groups.\ In this paper, we establish analogous results in the context of
division rings. According to the usual terminology, a field is a commutative
division ring.\ It is very easy to produce fields as extensions of simpler
ones.\ Recall that Representation Theory of groups gives also a general
procedure yielding division rings which are not necessarily fields.\ Indeed,
given a field $\mathrm{k}$ and a group $G$, the Schur lemma implies that the
endomorphism algebra $K$ of any irreducible finite-dimensional $\mathrm{k}%
[G]$-module ($\mathrm{k}[G]$ is the ring algebra of $G$ over $\mathrm{k}$) is
a division ring. When $\mathrm{k}$ is not algebraically closed, $K\supsetneqq
\mathrm{k}$ and is not commutative in general. So incidentally, we will obtain
structural information on the division ring of automorphisms of any
finite-dimensional $\mathrm{k}[G]$-module.

Let us make our notation precise. In what follows, $K$ is a division ring
containing the field $\mathrm{k}$ in its center. We address the question of
finding upper and lower bounds for the dimension of the $\mathrm{k}$-span
$\mathrm{k}\langle A_{1}\cdots A_{n}\rangle$ of product sets $A_{1}\cdots
A_{n},$ where $A_{1},\ldots,A_{n}$ are nonempty subsets of $K^{\ast
}=K\setminus\{0\}$. Note that this problem also makes sense for any algebra
$\mathcal{A}$ defined over $\mathrm{k}$.\ The estimates we obtain can thus
also be applied when $\mathcal{A}$ is contained in a division ring. In the
commutative case, this happens in particular for any algebra $\mathcal{A}%
:=\mathrm{k}[\alpha_{1},\ldots,\alpha_{n}]$, where $\alpha_{1},\ldots
,\alpha_{n}$ are elements of the field $K$, or for any sub-algebra of a field
of rational functions in several variables.

As far as we are aware, this kind of problems was considered for the first
time by Hou, Leung and Xiang \cite{Xian}, and Kainrath \cite{Kain}. Let $X$ be
a finite subset in $K$.\ We denote by $\mathrm{k}\langle X\rangle$ the
$\mathrm{k}$-subspace of $K$ generated by $X$ and write $\dim_{\mathrm{k}}(X)$
its dimension. For $X,Y$ two subsets of $K$, we set $XY=\{xy\mid x\in X,y\in
Y\}.$ Consider $A,B$ finite subsets of $K$. Then $\mathrm{k}\langle
AB\rangle=\mathrm{k}\langle\mathrm{k}\langle A\rangle\mathrm{k}\langle
B\rangle\rangle$ and $\dim_{\mathrm{k}}(AB)$ is finite. The following analogue
of Kneser's theorem for fields is proved in \cite{Xian}.

\begin{theorem}
Let $K$ be a commutative extension of $\mathrm{k}$. Assume every algebraic
element in $K$ is separable over $\mathrm{k}$.\ Let $A$ and $B$ be two
nonempty finite subsets of $K^{\ast}$. Then
\[
\dim_{\mathrm{k}}(AB)\geq\dim_{\mathrm{k}}(A)+\dim_{\mathrm{k}}(B)-\dim
_{\mathrm{k}}(H)
\]
where $H:=\{x\in K\mid x\mathrm{k}\langle AB\rangle\subseteq\mathrm{k}\langle
AB\rangle\}$.
\end{theorem}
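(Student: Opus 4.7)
I would follow the pattern of Kneser's classical argument for abelian groups, with the $e$-transform replaced by its subspace analogue and the stabilizer subgroup replaced by the field $H$. Set $V_A=\mathrm{k}\langle A\rangle$, $V_B=\mathrm{k}\langle B\rangle$ and $T=\mathrm{k}\langle AB\rangle$. A first observation is that $H$ is a subfield of $K$ containing $\mathrm{k}$: its definition makes it a $\mathrm{k}$-subalgebra that embeds into the finite-dimensional algebra $\mathrm{End}_{\mathrm{k}}(T)$, hence is finite-dimensional over $\mathrm{k}$; being a subring of the commutative field $K$, it is an integral domain, and a finite-dimensional $\mathrm{k}$-domain is a field.

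Next I would reduce to the case of trivial stabilizer. Since $HT=T$, replacing $V_A$ by $HV_A$ and $V_B$ by $HV_B$ leaves $T$ unchanged and only enlarges $\dim_{\mathrm{k}}V_A$ and $\dim_{\mathrm{k}}V_B$, so it is enough to prove the inequality when $V_A$ and $V_B$ are $H$-subspaces of $K$. Viewing everything over $H$, the tower formula $\dim_{\mathrm{k}}=\dim_H\cdot\dim_{\mathrm{k}}H$ turns the inequality into
\[
\dim_H(VW)\geq\dim_H V+\dim_H W-1,
\]
to be established for finite-dimensional $H$-subspaces $V,W$ of $K$ whose product span has stabilizer equal to $H$ in $K$.

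For the base case $\dim_H V=1$ the bound is immediate. For the induction on $\dim_HV+\dim_HW$, after scaling so that $1\in V$ I would pick $v\in V\setminus H$ and apply the linear Dyson transform $V':=V+vW$, $W':=W\cap v^{-1}V$. A direct computation yields $V'W'\subseteq VW$ and $\dim_HV'+\dim_HW'=\dim_HV+\dim_HW$ (from the modular law applied to $V$ and $vW$). Induction on $(V',W')$ closes the argument as soon as $W'\subsetneq W$. The main obstacle is the terminal case where every such $v$ forces $vW\subseteq V$: one must show that this rigidity produces a nontrivial subfield of $K$ stabilizing $VW$, contradicting the assumption. Here is where the separability hypothesis enters: the conditions $vW\subseteq V$ produce an element algebraic over $H$ whose minimal polynomial governs a candidate stabilizing subalgebra, and separability ensures that this polynomial has distinct roots, so that the algebra it generates is \'etale and splits as a product of fields, yielding an honest stabilizing subfield of $K$. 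In the purely inseparable setting the analogous algebra carries nilpotents and the construction collapses, which is why such a hypothesis is needed. Resolving this obstacle closes the induction and proves the theorem.
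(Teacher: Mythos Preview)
Your sketch has a genuine gap at the heart of the argument: the account of where separability enters is incorrect. You say the terminal case produces an algebraic element whose generated subalgebra might carry nilpotents in the inseparable setting. But everything lives inside the field $K$; any $\mathrm{k}$-subalgebra of $K$ is a domain, and a finite-dimensional domain over $\mathrm{k}$ is a field, separable or not. There are no nilpotents in sight, so separability cannot be doing the work you describe. (There is also a smaller problem: you announce an induction on $\dim_HV+\dim_HW$ while noting yourself that the transform preserves this sum, so the induction as written is not well-founded; and even repaired to induct on $\dim_HW$, the reduction to trivial stabilizer need not survive the transform, since $\mathrm{k}\langle V'W'\rangle$ may be strictly smaller than $\mathrm{k}\langle VW\rangle$ and hence have a larger stabilizer.)

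The paper does not reprove this theorem directly (it is cited from \cite{Xian}), but its proof of the generalization Theorem~\ref{Th_KL_Acom} exhibits the actual mechanism, which is quite different from a straight Kneser-style induction. The Dyson transform (Lemma~\ref{Lem_diff}) only yields, for each nonzero $a\in\mathrm{k}\langle A\rangle$, a subfield $H_a$ and a space $V_a\subseteq\mathrm{k}\langle AB\rangle$ with $H_aV_a=V_a$, $a\mathrm{k}\langle B\rangle\subseteq V_a$, and $\dim_{\mathrm{k}}V_a+\dim_{\mathrm{k}}H_a\geq\dim_{\mathrm{k}}A+\dim_{\mathrm{k}}B$; crucially the fields $H_a$ depend on $a$. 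The key extra idea is to run $a$ over the one-parameter family $x_\alpha=x_1+\alpha x_2+\cdots+\alpha^{n-1}x_n$, observe that every $H_{x_\alpha}$ lies in a fixed finite extension of $\mathrm{k}$, and invoke separability to guarantee that this extension has only finitely many intermediate fields. Pigeonhole then produces $n$ values of $\alpha$ sharing a common $H_{x_\alpha}=H$; by the Vandermonde lemma (Lemma~\ref{Lemma_VDM}) these $x_\alpha$ span $\mathrm{k}\langle A\rangle$, so $\mathrm{k}\langle AB\rangle=\sum_iV_{x_{\alpha_i}}$ is $H$-stable and the inequality follows. This Vandermonde/pigeonhole step is the essential use of separability and is entirely absent from your outline.
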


\noindent Here $H$ is an intermediate field containing $\mathrm{k}$.
Remarkably, the authors showed that their theorem implies Kneser's theorem for
abelian groups.\ It essentially suffices to use the characterization of all
finitely generated abelian groups and the Galois correspondence. In
\cite{EL2}, we obtain an analogue of a theorem by Olson for division rings
without any separability hypothesis. In what follows, we shall refer to these
analogues as linear Kneser and linear Olson theorems. The combinatorial
methods used in the linear setting (that is, for fields or division rings) are
often very similar to their counterparts in groups. Nevertheless, there are
some restrictions and complications mostly due to the fact that

\begin{itemize}
\item $\mathrm{k}\langle A\rangle$ and $\mathrm{k}\langle A^{-1}\rangle$ do
not have the same dimension in general, whereas $A$ and $A^{-1}$ have the same cardinality,

\item a $\mathrm{k}$-subspace $V$ in $K$ may admit infinitely many
$\mathrm{k}$-subspaces $W$ such that $V\oplus W=K$, whereas a subset $A$ in a
group $G$ has a unique complement,

\item when $K$ is finite-dimensional over $\mathrm{k}$, there may exist
infinitely many intermediate division rings $H$ such that $\mathrm{k}\subseteq
H\subseteq K$, whereas a finite group $G$ has only a finite number of subgroups,

\item given $H_{1}$ and $H_{2}$ subfields of the (commutative field) $K$,
$H_{1}H_{2}$ is not a field in general, whereas the product set of two
subgroups of an abelian group is always a group.
\end{itemize}

\noindent So, to avoid gaps or ambiguities, we have completely written down
the proofs of our linear statements.\ These proofs sometimes differ from their
analogues in groups. For example, the possible existence of an infinite number
of intermediate fields seems to impose a separability hypothesis in the
previous linear Kneser theorem. It is nevertheless conjectured in \cite{Hou}
that this hypothesis can be relaxed as in the linear Olson theorem. Also, in
order to adapt the arguments used to establish the estimates in groups, we
often need, in our division ring context, to carefully chose the decomposition
in direct summands of the spaces we consider in our proofs.

\bigskip

The paper is organized as follows. In Section 2, we make our notation precise
and give equivalent forms of the linear Kneser theorem.\ Section 3 is devoted
to some linear analogues of results by Ruzsa. In particular, we derive a
Pl\"{u}nnecke-Ruzsa type theorem for fields.\ The arguments we use here are
adaptations to the context of division rings of some very elegant and
elementary proofs recently obtained by Petridis in \cite{Petr}. We also
establish that our Pl\"{u}nnecke-Ruzsa theorem for fields implies the
corresponding theorem for abelian groups. In Section 4, we generalize the
Pl\"{u}nnecke-Ruzsa type theorems of Section 2 in the context of associative
unital algebras. In Section 5, we establish different Kneser type estimates
for division rings. More precisely, we first study the case where $A$ is
assumed commutative (that is, the elements of $A$ are pairwise commutative)
and obtain a linear analogue of a theorem by Diderrich \cite{Die}. Finally, we
adapt Hamidoune connectivity to the context of division rings and obtain a
linear version of a theorem by Tao describing the sets of small doubling in a group.

\section{The division ring setting}

\subsection{Vector span in a division ring}

Let $K$ be a division ring and $\mathrm{k}$ a subfield (thus commutative) of
$K$ contained in its center. We denote by $K^{\ast}=K\setminus\{0\}$ the group
of invertible elements in $K$.

For any subset $A$ of $K^{\ast}$, let $\mathrm{k}\langle A\rangle$ be the
$\mathrm{k}$-subspace of $K$ generated by $A$. We write $\left\vert
A\right\vert $ for the cardinality of $A$, and $\dim_{\mathrm{k}}(A)$ for the
dimension of $\mathrm{k}\langle A\rangle$ over $\mathrm{k}$.\ When $\left\vert
A\right\vert $ is finite, $\dim_{\mathrm{k}}(A)$ is also finite and we have
$\dim_{\mathrm{k}}(A)\leq\left\vert A\right\vert $. We denote by
$\mathbb{D}(A)\subseteq K$ the sub division ring generated by $A$ in $K$.

Given subsets $A$ and $B$ of $K$, we thus have $\mathrm{k}\langle A\cup
B\rangle=\mathrm{k}\langle A\rangle+\mathrm{k}\langle B\rangle$, the sum of
the two spaces $\mathrm{k}\langle A\rangle$ and $\mathrm{k}\langle B\rangle$.
We have also $\mathrm{k}\langle A\cap B\rangle\subseteq\mathrm{k}\langle
A\rangle\cap\mathrm{k}\langle B\rangle$ and $\mathrm{k}\langle AB\rangle
=\mathrm{k}\langle\mathrm{k}\langle A\rangle\mathrm{k}\langle B\rangle\rangle
$.\ We write as usual
\[
AB:=\{ab\mid a\in A,b\in B\}
\]
for the Minkowski product of the sets $A$ and $B$.\ Given a family of nonempty
subsets $A_{1},\ldots,A_{n}$ of $K^{\ast}$, we define $A_{1}\cdots A_{n}$
similarly.\ We also set $A^{-1}:=\{a^{-1}\mid a\in A\}$. Observe that any
finite-dimensional $\mathrm{k}$-subspace $V$ of $K$ can be realized as
$V=\mathrm{k}\langle A\rangle$, where $A$ is any finite subset of nonzero
vectors spanning $V$. Also, when $V_{1}$ and $V_{2}$ are two $\mathrm{k}%
$-vector spaces in $K$, $V_{1}V_{2}\subseteq\mathrm{k}\langle V_{1}%
V_{2}\rangle$ but $V_{1}V_{2}$ is not a vector space in general.

In what follows we aim to give some estimates of $\dim_{\mathrm{k}}(AB)$ or
more generally of $\dim_{\mathrm{k}}(A_{1}\cdots A_{r})$ where $A_{1}%
,\ldots,A_{r}$ are finite subsets of $K^{\ast}$. The following is
straightforward%
\[
\max(\dim_{\mathrm{k}}(A),\dim_{\mathrm{k}}(B))\leq\dim_{\mathrm{k}}%
(AB)\leq\dim_{\mathrm{k}}(A)\dim_{\mathrm{k}}(B).
\]
The methods we will use to estimate $\dim_{\mathrm{k}}(AB)$ are quite
analogous to the tools used to estimate the cardinality $\left\vert
AB\right\vert $ of the product set $AB$ where $A$ and $B$ are subsets of a
given group. Many results on estimates of product sets have a linear analogue
for the dimension of the space generated by such products. Nevertheless, there
are crucial differences due notably to the fact that $\mathrm{k}\langle
A\rangle$ and $\mathrm{k}\langle A^{-1}\rangle$ have not the same dimension in
general, whereas $A$ and $A^{-1}$ has the same cardinality. This can be easily
verified by taking $\mathrm{k}=\mathbb{C}$, $K=\mathbb{C}(t)$ the function
field in the indeterminate $t$ and $A_{n}:=\{(t-a)\mid a=1,\ldots,n\}$.
Indeed, $\dim_{\mathbb{C}}(A_{n})=2$, but $\dim_{\mathbb{C}}(A_{n}^{-1}%
)=n$.\ Also observe that a finite abelian group has only a finite number of
subgroups, whereas a finite commutative extension of a commutative field have
an infinite number of intermediate extensions when it is not separable.

The two following elementary lemmas will be useful.

\begin{lemma}
\label{lem_stab}Let $A$ be a finite subset of $K^{\ast}$ such that
$\mathrm{k}\langle A^{2}\rangle=\mathrm{k}\langle A\rangle$.\ Then
$\mathrm{k}\langle A\rangle$ is a division ring.
\end{lemma}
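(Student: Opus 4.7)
Let $V := \mathrm{k}\langle A\rangle$. My plan is to show directly that $V$ is closed under multiplication, contains the identity $1$, and contains the inverses of its nonzero elements; since $V$ is already a $\mathrm{k}$-subspace of $K$, this will exhibit it as a sub division ring of $K$.

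First I would use the hypothesis to get multiplicative closure. By a remark already made in the text, $\mathrm{k}\langle A^{2}\rangle = \mathrm{k}\langle \mathrm{k}\langle A\rangle\,\mathrm{k}\langle A\rangle\rangle = \mathrm{k}\langle VV\rangle$, so the assumption $\mathrm{k}\langle A^{2}\rangle = V$ gives $VV \subseteq V$. Since $A$ is finite, $V$ is a finite-dimensional $\mathrm{k}$-vector subspace of $K$.

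Next, for each nonzero $a \in V$ I would consider the left-multiplication map $L_{a}\colon V \to V$, $v \mapsto av$, which is well defined thanks to $VV \subseteq V$ and is $\mathrm{k}$-linear because $\mathrm{k}$ lies in the center of $K$. Since $K$ is a division ring it has no zero divisors, so $L_{a}$ is injective; because $V$ is finite-dimensional over $\mathrm{k}$, $L_{a}$ is therefore bijective. Applying surjectivity of $L_{a}$ to the element $a \in V$ itself gives some $v \in V$ with $av = a$, and cancelling the invertible element $a$ in $K$ yields $v = 1$, so $1 \in V$. Applying surjectivity again to $1 \in V$ gives $w \in V$ with $aw = 1$, whence $a^{-1} = w \in V$.

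Putting these facts together, $V$ is a $\mathrm{k}$-subspace of $K$ which is closed under products, contains $1$, and contains inverses of all its nonzero elements, hence is a sub division ring of $K$. There is no real obstacle here; the only subtlety to watch is that one cannot assume $1 \in V$ a priori, which is why the surjectivity of $L_{a}$ must be used twice (once to produce $1$, and once to produce $a^{-1}$).
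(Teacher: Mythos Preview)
Your proof is correct and follows essentially the same line as the paper's own argument: both observe that the hypothesis gives multiplicative closure of $V=\mathrm{k}\langle A\rangle$, then use that left multiplication by a nonzero $a\in V$ is an injective (hence bijective) $\mathrm{k}$-linear endomorphism of the finite-dimensional space $V$, applying surjectivity once to produce $1\in V$ and once more to produce $a^{-1}\in V$. Your write-up is in fact slightly more explicit than the paper's about why $1\in V$ is not a priori known and must be obtained from surjectivity.
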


\begin{proof}
Observe that $\mathrm{k}\langle A^{2}\rangle=\mathrm{k}\langle A\rangle$ means
that $\mathrm{k}\langle A\rangle$ is closed under multiplication. Then, for
any nonzero $a\in\mathrm{k}\langle A\rangle$, the map $\varphi_{a}%
:\mathrm{k}\langle A\rangle\rightarrow\mathrm{k}\langle A\rangle$ which sends
$\alpha\in\mathrm{k}\langle A\rangle$ on $\varphi_{a}(\alpha)=a\alpha$ is a
$\mathrm{k}$-linear automorphism of the space $\mathrm{k}\langle A\rangle
$.\ In particular, $\varphi_{a}$ is surjective. Since $a\in\mathrm{k}\langle
A\rangle$, $1$ belongs to $\mathrm{k}\langle A\rangle$. Now since
$1\in\mathrm{k}\langle A\rangle$, $\alpha=a^{-1}\in A$ and $\mathrm{k}\langle
A\rangle$ is a field.
\end{proof}

\begin{lemma}
\label{lemma_stab}Consider a finite-dimensional $\mathrm{k}$-subspace $V$ of
$K$ and a sub division ring $H$ of $K$ containing $\mathrm{k}$ such that
$HV=V$. Then there exists a finite subset $S$ of $K^{\ast}$ such that
\begin{equation}
V=\bigoplus_{s\in S}Hs\text{.} \label{decom}%
\end{equation}
In particular, $V$ is a left $H$-module of dimension $\left\vert S\right\vert
\dim_{\mathrm{k}}(H)$, and (\ref{decom}) gives its decomposition into
irreducible components.
\end{lemma}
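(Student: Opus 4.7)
The plan is to recognize that the hypothesis $HV = V$, combined with the fact that $V$ is a $\mathrm{k}$-subspace (hence an additive subgroup of $K$), turns $V$ into a left $H$-module via the multiplication of $K$. All the module axioms hold for free because they already hold in the ambient ring $K$. The whole statement then reduces to the fact that a finitely generated module over a division ring is free.

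First I would check finite generation of $V$ as a left $H$-module. Since $\mathrm{k} \subseteq H$, any $H$-linearly independent family in $V$ is a fortiori $\mathrm{k}$-linearly independent, so the $H$-dimension of $V$ is bounded by $\dim_{\mathrm{k}}(V) < \infty$. A standard argument (inductively extracting a maximal $H$-linearly independent subfamily from a spanning set) then produces a finite subset $S = \{s_{1}, \ldots, s_{n}\} \subseteq V$ which is an $H$-basis of $V$, yielding
\[
V = \bigoplus_{s \in S} Hs.
\]
Each $s_i$ is nonzero since it is a basis element, so $S \subseteq K^{\ast}$.

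For the dimension count, I would use that $K$ has no zero divisors: for each $s \in S$, the map $H \to Hs$ defined by $h \mapsto hs$ is surjective by definition of $Hs$, and injective because $s \neq 0$ and $K$ is a division ring. Hence it is a $\mathrm{k}$-linear (and left $H$-linear) isomorphism, giving $\dim_{\mathrm{k}}(Hs) = \dim_{\mathrm{k}}(H)$, and summing over $s \in S$ yields $\dim_{\mathrm{k}}(V) = |S|\dim_{\mathrm{k}}(H)$.

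Finally, irreducibility of each summand $Hs$ as an $H$-module is immediate from the $H$-linear isomorphism $H \cong Hs$: the only left $H$-submodules of $H$ are $\{0\}$ and $H$ since $H$ is a division ring. I do not expect any real obstacle here; the only point requiring mild care is justifying that $V$ is finite-dimensional over $H$ (to know a basis exists), which boils down to the elementary observation that $\mathrm{k}$-independence is weaker than $H$-independence when $\mathrm{k} \subseteq H$.
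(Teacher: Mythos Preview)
Your proposal is correct and follows essentially the same approach as the paper. The paper's proof is the hands-on version of the ``standard argument'' you invoke: it picks a nonzero $s$ outside the current partial sum $Hs_{1}\oplus\cdots\oplus Hs_{t}$, observes that $Hs\cap(Hs_{1}\oplus\cdots\oplus Hs_{t})=\{0\}$ (since $H$ is a division ring), and iterates until $V$ is exhausted, using finiteness of $\dim_{\mathrm{k}}(V)$ for termination; this is exactly your extraction of an $H$-basis, just phrased without the module-theoretic vocabulary.
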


\begin{proof}
For any nonzero vector $v$ in $V$, $Hv$ is a $\mathrm{k}$-subspace of $V$. In
particular $\dim_{\mathrm{k}}(Hv)=\dim_{\mathrm{k}}(H)$ is finite. Moreover if
$s$ is a nonzero vector in $V\setminus(Hs_{1}\oplus\cdots\oplus Hs_{t})$, we
have $Hs\cap(Hs_{1}\oplus\cdots\oplus Hs_{t})=\{0\}$.\ The lemma easily follows.
\end{proof}

\bigskip

\noindent\textbf{Remarks:}

\begin{enumerate}
\item From the previous lemma applied to $V=K$, we deduce that $\dim
_{\mathrm{k}}(H)$ divides $\dim_{\mathrm{k}}(K)$ when $\dim_{\mathrm{k}}(K)$
is finite.

\item When $VH=V$, we similarly obtain a decomposition into right $H$-modules
$V=\bigoplus_{s\in S}sH$.
\end{enumerate}

\bigskip

For any subset $X$ in $K^{\ast}$, we set%
\[
H_{\mathrm{k},l}(X):=\{h\in K\mid h\,\mathrm{k}\langle X\rangle\subseteq
\mathrm{k}\langle X\rangle\}\text{ and }H_{\mathrm{k},r}(X):=\{h\in
K\mid\mathrm{k}\langle X\rangle\,h\subseteq\mathrm{k}\langle X\rangle\}
\]
for the left and right stabilizers of $\mathrm{k}\langle X\rangle$ in $K$.
Clearly $H_{\mathrm{k},l}(X)$ and $H_{\mathrm{k},r}(X)$ are division rings
containing $\mathrm{k}$.\ In particular, when $K$ is a field, $H_{\mathrm{k}%
,l}(X)=H_{\mathrm{k},r}(X)$ is a commutative extension of $\mathrm{k}$ that we
simply write as $H_{\mathrm{k}}(X)$.\ If $H_{\mathrm{k},l}(X)$ (resp.
$H_{\mathrm{k},r}(X)$) is not equal to $\mathrm{k}$, we says that
$\mathrm{k}\langle X\rangle$ is \emph{left periodic} (resp. \emph{right
periodic}). When $\mathrm{k}\langle X\rangle$ is finite-dimensional,\ there
exists by Lemma \ref{lemma_stab} a finite subset $S$ in $\mathrm{k}\langle
X\rangle$ such that $\mathrm{k}\langle X\rangle=\oplus_{s\in S}H_{\mathrm{k}%
,l}(X)s$ (resp. $\mathrm{k}\langle X\rangle=\oplus_{s\in S}sH_{\mathrm{k}%
,r}(X)$). Observe also for $X$ and $Y$ two finite subsets of $K$, if $\langle
X\rangle$ is left periodic (resp. $\langle Y\rangle$ is right periodic), then
$\langle XY\rangle$ is left periodic (resp. right periodic). Indeed, for
$\langle X\rangle$ left periodic, we have $H_{\mathrm{k},l}(X)\neq\mathrm{k}$
and $H_{\mathrm{k},l}(X)\langle X\rangle\subseteq\langle X\rangle$.\ By
linearity of the multiplication in $K$, this gives $H_{\mathrm{k},l}(X)\langle
XY\rangle\subseteq\langle XY\rangle$ thus $H_{\mathrm{k},l}(X)\subseteq
H_{\mathrm{k},l}(XY)$ and $H_{\mathrm{k},l}(XY)\neq\mathrm{k}$.\ The case
$\langle Y\rangle$ right periodic is similar.

\subsection{Product sets in abelian group and linear setting}

\label{subsec-linkabelina}As observed in \cite{Xian}, estimates for
cardinality of product sets in abelian groups can be obtained from their
linear counterparts in fields. To do this consider an abelian group $G$ and
assume%
\[
G\simeq\mathbb{Z}^{l}\times\mathbb{Z}/t_{1}\mathbb{Z\times}\cdots
\times\mathbb{Z}/t_{r}\mathbb{Z}\text{.}%
\]
The idea is to construct a field extension $K$ of $\mathrm{k}=\mathbb{C}$ such
that the multiplicative group of $K$ contains a subgroup isomorphic to $G$
whose elements are linearly independent over $\mathbb{C}$. Consider first the
field $E=\mathbb{C}(x_{1},\ldots,x_{r})$ of rational functions over the
indeterminates $x_{1},\ldots,x_{r}$ ($x_{1},\ldots,x_{r}$ are thus assumed
algebraically independent).\ Let $F$ be the decomposition field of the
polynomial
\[
P(Z)=(Z-x_{1}^{t_{1}})\cdots(Z-x_{r}^{t_{r}})\in E[Z]
\]
over $E$.\ There exist elements $\sqrt[t_{1}]{x_{1}},\ldots,\sqrt[t_{r}]%
{x_{r}}$ in $F$ such that $F=E[\sqrt[t_{1}]{x_{1}},\ldots,\sqrt[t_{r}]{x_{r}%
}]$. Now set $K=F(T_{1},\ldots,T_{l})$ the field of rational functions over
$E$ in the algebraically independent indeterminates $T_{1},\ldots,T_{l}$.

There exists in $G$ non torsion elements $u_{1},\ldots,u_{l}$ and elements
$\gamma_{1},\ldots,\gamma_{r}$ respectively of order $t_{1},\ldots,t_{r}$ such
that each element $g\in G$ can be written uniquely on the form $g=u_{1}%
^{a_{1}}\cdots u_{l}^{a_{l}}\gamma_{1}^{b_{1}}\cdots\gamma_{r}^{b_{r}}$ where
$a_{i}\in\mathbb{Z}$ for any $i=1,\ldots,l$ and $b_{j}\in\{0,\ldots,t_{j}-1\}$
for any $j=1,\ldots,r$. One associates to $g\in G$ the element $\eta(g)\in
K\setminus\{0\}$ such that%
\[
\eta(g)=T_{1}^{a_{1}}\cdots T_{l}^{a_{l}}(\sqrt[t_{1}]{x_{1}})^{b_{1}}%
\cdots(\sqrt[t_{r}]{x_{r}})^{b_{r}}.
\]
Clearly $\eta$ is a group embedding from $G$ to the multiplicative group of
$K$ and the elements of $\eta(G)$ are linearly independent over $\mathbb{C}$
in $K$. Consider a finite subset $X\subseteq G$. Let $\Phi(X)=\mathbb{C}%
\langle\eta(X)\rangle$. We also define
\[
S(X)=\{g\in G\mid gX=X\}\text{ and }H_{\mathbb{C}}(X)=\{x\in K\mid
x\Phi(X)\subseteq\Phi(X)\}\text{.}%
\]
Observe that $S(X)$ is a finite subgroup of $X$ and $H_{\mathbb{C}%
}(X)\subseteq K$ is a field extension of $\mathbb{C}$.

We refer to \cite{Xian} for the proof of the following proposition (which uses
Galois correspondence for Assertion 3). This proposition provides a
correspondence between cardinality and dimension which allows one to recover
classical results from their dimensional counterparts. This applies for
instance to Theorem 2.7, or Theorem 3.3.'

\begin{proposition}
\label{prop_link}Given $X,Y$ two finite subsets in an abelian $G$, we have

\begin{enumerate}
\item $\left\vert X\right\vert =\dim_{\mathbb{C}}(\Phi(X))$,

\item $\left\vert XY\right\vert =\dim_{\mathbb{C}}(\Phi(X)\Phi(Y)),$

\item $\Phi(S(X))=H_{\mathbb{C}}(X)$.
\end{enumerate}
\end{proposition}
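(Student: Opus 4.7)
The plan is to dispatch assertions (1) and (2) directly from the construction of $\eta$, and to prove (3) in two steps, the first by direct computation and the second by Galois correspondence.

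For (1), the injectivity of $\eta$ together with the $\mathbb{C}$-linear independence of $\eta(G)$ ensure that $\eta(X)$ is a family of $|X|$ linearly independent vectors spanning $\Phi(X)$, whence $\dim_{\mathbb{C}}\Phi(X)=|X|$. For (2), I would use that $\eta$ is a group embedding to obtain $\eta(X)\eta(Y)=\eta(XY)$ as subsets of $K^{*}$, so
\[\mathbb{C}\langle\Phi(X)\Phi(Y)\rangle=\mathbb{C}\langle\mathbb{C}\langle\eta(X)\rangle\mathbb{C}\langle\eta(Y)\rangle\rangle=\mathbb{C}\langle\eta(X)\eta(Y)\rangle=\mathbb{C}\langle\eta(XY)\rangle=\Phi(XY),\]
and assertion (1) applied to $XY$ gives $\dim_{\mathbb{C}}(\Phi(X)\Phi(Y))=|XY|$.

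For (3), the inclusion $\Phi(S(X))\subseteq H_{\mathbb{C}}(X)$ is direct: for $g\in S(X)$, $\eta(g)\Phi(X)=\mathbb{C}\langle\eta(gX)\rangle=\mathbb{C}\langle\eta(X)\rangle=\Phi(X)$, so $\eta(g)\in H_{\mathbb{C}}(X)$, and $\mathbb{C}$-linearity gives the inclusion. Note also that $\Phi(S(X))$ is itself a subfield: $\eta(S(X))$ is multiplicatively closed because $S(X)$ is a subgroup, and Lemma \ref{lem_stab} then applies. To prepare the reverse inclusion, decompose $X=\bigsqcup_{i=1}^{k}g_{i}S(X)$ with $k=|X|/|S(X)|$; the $\mathbb{C}$-linear independence of $\eta(G)$ yields the direct-sum decomposition $\Phi(X)=\bigoplus_{i=1}^{k}\eta(g_{i})\Phi(S(X))$. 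Since $\eta(g_{i})\in\Phi(X)$ and $H_{\mathbb{C}}(X)\Phi(X)\subseteq\Phi(X)$, this gives $\Phi(X)=\sum_{i=1}^{k}\eta(g_{i})H_{\mathbb{C}}(X)$, so Lemma \ref{lemma_stab} forces $\dim_{H_{\mathbb{C}}(X)}\Phi(X)\leq k$ and hence $\dim_{\mathbb{C}}H_{\mathbb{C}}(X)\geq|S(X)|=\dim_{\mathbb{C}}\Phi(S(X))$.

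The main obstacle is the matching upper bound $\dim_{\mathbb{C}}H_{\mathbb{C}}(X)\leq|S(X)|$, which is where Galois correspondence enters. My approach is to exploit the explicit structure of $K$: it arises as a finite abelian Galois extension of the purely transcendental field $\mathbb{C}(x_{1},\ldots,x_{r},T_{1},\ldots,T_{l})$ via the radicals $\sqrt[t_{j}]{x_{j}}$, with Galois group $\prod_{j}\mathbb{Z}/t_{j}\mathbb{Z}$ acting on each element of $\eta(G)$ by a $\mathbb{C}$-scalar character determined by the torsion part of the corresponding $g$. I would then identify both $H_{\mathbb{C}}(X)$ and $\Phi(S(X))$ as fixed fields, in $K$ over a suitable base, of the same subgroup of this action: namely the stabilizer of the subspace $\Phi(X)$. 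The delicate step, where the $\mathbb{C}$-linear independence of $\eta(G)$ is essential, is to verify that any Galois automorphism stabilizing $\Phi(X)$ must act by a character trivial on $S(X)$; once this matching is established, the Galois correspondence yields $\dim_{\mathbb{C}}H_{\mathbb{C}}(X)=|S(X)|$, and the easy inclusion together with equality of dimensions upgrades to the desired equality $\Phi(S(X))=H_{\mathbb{C}}(X)$.
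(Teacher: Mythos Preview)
The paper does not prove this proposition itself; it refers to \cite{Xian}, noting only that assertion~3 uses Galois correspondence. Your arguments for (1), (2), the inclusion $\Phi(S(X))\subseteq H_{\mathbb{C}}(X)$, and the bound $\dim_{\mathbb{C}}H_{\mathbb{C}}(X)\geq|S(X)|$ are correct and are exactly what one expects from the construction.

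The Galois sketch for the reverse inclusion, however, does not work as written. Neither $H_{\mathbb{C}}(X)$ nor $\Phi(S(X))$ contains the base $L=\mathbb{C}(x_{1},\ldots,x_{r},T_{1},\ldots,T_{l})$ --- both are finite-dimensional over $\mathbb{C}$ --- so they are not intermediate fields of $K/L$ and cannot literally be realised as fixed fields there. More seriously, every $\sigma\in\mathrm{Gal}(K/L)$ multiplies each $\eta(g)$ by a root of unity and therefore already stabilises the subspace $\Phi(X)$; your proposed subgroup ``the stabiliser of $\Phi(X)$'' is thus all of $\mathrm{Gal}(K/L)$ and carries no information about $S(X)$. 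Whatever Galois argument \cite{Xian} runs must be organised differently. In fact the reverse inclusion follows directly from linear independence, with no Galois input: for $h\in H_{\mathbb{C}}(X)$ and each $g_{0}\in X$ one has $h\,\eta(g_{0})\in\Phi(X)$, hence $h\in\eta(g_{0})^{-1}\Phi(X)=\Phi(g_{0}^{-1}X)$; since spans of subsets of the independent family $\eta(G)$ intersect in the span of the set-theoretic intersection, this yields $h\in\Phi\bigl(\bigcap_{g_{0}\in X}g_{0}^{-1}X\bigr)=\Phi(S(X))$, completing (3).
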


\subsection{Product sets in nonabelian groups and linear setting}

\label{subsec-linknonabelina}It would be desirable to have an analogue of
Proposition \ref{prop_link} in the non abelian setting.\ Unfortunately, the
construction of the previous paragraph can not be generalized. Indeed, there
are finite subgroups which cannot be embedded in the group of invertible of a
division ring (see \cite{Am}).

In this paragraph, we will focus on a weaker connection between cardinality of
product sets in a (possible non abelian) group $G$ and dimension of subspaces
in the group algebra $\mathbb{C}[G]$.

Consider $G$ a (possible non abelian) group. Recall that the group algebra
$\mathbb{C}[G]$ is the associative algebra generated over $\mathbb{C}$ by the
elements $e_{g},g\in G$ and the relations $e_{g}e_{g^{\prime}}=e_{gg^{\prime}%
}$ for any $g,g^{\prime}\in G$. To any finite subset $X$ of $G$ we associate
the $\mathbb{C}$-subspace $\mathrm{k}\langle A_{X}\rangle$ of $\mathbb{C}[G]$
where $A_{X}=\{e_{g}\mid g\in X\}\subset\mathbb{C}[G]$.

Given two finite subspaces $X$ and $Y$ of $\mathbb{C}[G]$, we have clearly%
\[
\left\vert X\right\vert =\dim_{\mathbb{C}}(A_{X})\text{ and }\left\vert
XY\right\vert =\dim_{\mathbb{C}}(A_{X}A_{Y}).
\]
In Section \ref{Sec_alge}, we will see that it is possible to recover some
estimates for the cardinality of product sets in groups from analogous
statements in ring algebras.

\subsection{The linear Kneser theorem}

We now recall the linear Kneser theorem stated in \cite{Xian} for fields.

\begin{theorem}
\label{Th_HL}Let $K$ be a commutative extension of $\mathrm{k}$. Assume every
algebraic element in $K$ is separable over $\mathrm{k}$.\ Let $A$ and $B$ be
two nonempty, finite subsets of $K^{\ast}$. Then
\[
\dim_{\mathrm{k}}(AB)\geq\dim_{\mathrm{k}}(A)+\dim_{\mathrm{k}}(B)-\dim
_{\mathrm{k}}(H_{\mathrm{k}}(AB)).
\]

\end{theorem}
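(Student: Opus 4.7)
\medskip
\noindent\textbf{Proof plan.} My plan is to proceed by induction on $\dim_{\mathrm{k}}(B)$, following the classical Kneser proof via a Dyson-type transform, after a preliminary reduction to the case where the stabilizer equals the base field.

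Since $K$ is commutative, replacing $A$ by $a_{0}^{-1}A$ and $B$ by $b_{0}^{-1}B$ for fixed $a_{0}\in A$, $b_{0}\in B$ leaves every dimension and the stabilizer $H_{\mathrm{k}}(AB)$ unchanged, so one may assume $1\in A\cap B$. Write $V=\mathrm{k}\langle A\rangle$, $W=\mathrm{k}\langle B\rangle$, $U=\mathrm{k}\langle AB\rangle$ and $H=H_{\mathrm{k}}(AB)$; this gives $V,W\subseteq U$. The base case $\dim_{\mathrm{k}}(W)=1$ forces $W=\mathrm{k}$, $U=V$, and the bound follows from $\dim_{\mathrm{k}}(H)\geq 1$.

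The first reduction passes to the trivial-stabilizer case $H=\mathrm{k}$. The spaces $HV$ and $HW$ are $H$-subspaces of $U$ (by Lemma \ref{lemma_stab}) containing $V$ and $W$, with $(HV)(HW)\subseteq HU=U$. Since $HU=U$ yields $H_{H}(AB)=H_{\mathrm{k}}(AB)=H$, and $K/\mathrm{k}$ separable implies $K/H$ separable, the theorem over base field $H$ applied to $A,B$ reads $\dim_{H}(U)\geq\dim_{H}(HV)+\dim_{H}(HW)-1$. Multiplying by $\dim_{\mathrm{k}}(H)$ and using $\dim_{\mathrm{k}}(HV)\geq\dim_{\mathrm{k}}(V)$, $\dim_{\mathrm{k}}(HW)\geq\dim_{\mathrm{k}}(W)$ recovers the original bound, so it suffices to prove the case $H=\mathrm{k}$.

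Assume now $H=\mathrm{k}$. If $VW\subseteq V$ then $U=V$, and commutativity of $K$ gives $wV=Vw\subseteq V$ for each $w\in W$, so $W\subseteq H_{\mathrm{k}}(V)=H_{\mathrm{k}}(U)=\mathrm{k}$, which reduces to the base case. Otherwise pick $x\in V$ with $xW\not\subseteq V$ and form the Dyson transform $V'=V+xW$, $W'=W\cap x^{-1}V$. A direct computation using commutativity checks that $V'W'\subseteq VW\subseteq U$ (the cross term $xww'$ equals $w(xw')\in WV$), while the standard dimension formula gives $\dim_{\mathrm{k}}(V')+\dim_{\mathrm{k}}(W')=\dim_{\mathrm{k}}(V)+\dim_{\mathrm{k}}(W)$ together with $\dim_{\mathrm{k}}(W')<\dim_{\mathrm{k}}(W)$. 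The induction hypothesis applied to $(V',W')$ yields $\dim_{\mathrm{k}}\mathrm{k}\langle V'W'\rangle\geq\dim_{\mathrm{k}}(V)+\dim_{\mathrm{k}}(W)-\dim_{\mathrm{k}}(H')$ with $H'=H_{\mathrm{k}}(\mathrm{k}\langle V'W'\rangle)$.

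The main obstacle is closing the induction when $H'\supsetneq\mathrm{k}$: the stabilizer of the smaller product space may be strictly larger than $H_{\mathrm{k}}(U)=\mathrm{k}$. The resolution is to re-enter the first reduction with base field $H'$ and iterate, producing an ascending chain of intermediate fields $\mathrm{k}\subsetneq H'\subsetneq H''\subsetneq\cdots$ contained in $K$. The separability hypothesis is precisely what forces termination: any finitely generated intermediate subextension of a separable extension $K/\mathrm{k}$ contains only finitely many intermediate fields, by the primitive element theorem and the Galois correspondence, so the chain must stabilize and the estimate closes. Without separability, infinitely many intermediate fields can occur and the descent would fail, which is why the hypothesis appears in the statement.
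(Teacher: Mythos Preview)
The paper does not give its own proof of Theorem \ref{Th_HL}; it is quoted from \cite{Xian}. The closest argument actually carried out in the paper is the proof of Theorem \ref{Th_KL_Acom}, which follows the Hou--Leung--Xiang strategy and specializes to a proof of Theorem \ref{Th_HL} when $K$ is a field.

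Your reduction to the case $H_{\mathrm{k}}(AB)=\mathrm{k}$ is correct (this is exactly part of Theorem \ref{Th_Lin_Kne_n}), and your Dyson transform computation is fine. The genuine gap is in the last paragraph. After the transform you have $U'=\mathrm{k}\langle V'W'\rangle\subseteq U$ with stabilizer $H'$, and the induction only yields $\dim_{\mathrm{k}}U\geq\dim_{\mathrm{k}}U'\geq\dim_{\mathrm{k}}V+\dim_{\mathrm{k}}W-\dim_{\mathrm{k}}H'$. When $H'\supsetneq\mathrm{k}$ you propose to ``re-enter the first reduction with base field $H'$'', but that reduction replaces the base field by the stabilizer of the \emph{product space at hand}. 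Here $H'$ stabilizes $U'$, not $U$; since you have already arranged $H_{\mathrm{k}}(U)=\mathrm{k}$, the field $H'$ does not act on $U$ at all, so passing to base field $H'$ gives no control over $\dim_{\mathrm{k}}U$. It is also not explained why iterating would produce a \emph{strictly} ascending tower $\mathrm{k}\subsetneq H'\subsetneq H''\subsetneq\cdots$: the stabilizer at the next stage contains $H'$ but need not be larger, so your invocation of separability to force termination is not justified as stated.

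The route taken in \cite{Xian} (and reproduced in the paper's proof of Theorem \ref{Th_KL_Acom}) avoids this obstruction by proving a stronger auxiliary statement, Lemma \ref{Lem_diff}: for \emph{every} nonzero $a\in\mathrm{k}\langle A\rangle$ there exist a subfield $H_a\subseteq\mathbb{D}(A)$ and a space $V_a\subseteq\mathrm{k}\langle AB\rangle$ with $a\mathrm{k}\langle B\rangle\subseteq V_a$, $H_aV_a=V_a$, and $\dim_{\mathrm{k}}V_a+\dim_{\mathrm{k}}H_a\geq\dim_{\mathrm{k}}A+\dim_{\mathrm{k}}B$. Separability is then used, not to terminate a chain, but to guarantee that the algebraic closure of $\mathrm{k}$ in $\mathbb{D}(A)$ has only finitely many intermediate subfields; hence among the infinitely many parameters $\alpha\in\mathrm{k}$ (here $\mathrm{k}$ infinite) the fields $H_{x_\alpha}$ must repeat. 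The Vandermonde trick (Lemma \ref{Lemma_VDM}) then produces a basis $x_{\alpha_1},\ldots,x_{\alpha_n}$ of $\mathrm{k}\langle A\rangle$ sharing a common field $H$, whence $\mathrm{k}\langle AB\rangle=\sum_i V_{\alpha_i}$ is itself $H$-stable. That is the missing idea in your plan: one must manufacture a field that stabilizes $\mathrm{k}\langle AB\rangle$, not merely a proper subspace of it.
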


In \S \ \ref{subsec-linTao}, we will give a noncommutative version of the
following corollary due to Tao \cite{Tao} where the separability hypothesis
can be relaxed.

\begin{corollary}
Let $K$ be a commutative extension of $\mathrm{k}$. Assume every algebraic
element in $K$ is separable over $\mathrm{k}$.\ Let $A$ be a nonempty, finite
subset of $K^{\ast}$ such that $\dim_{\mathrm{k}}(A^{2})\leq(2-\varepsilon
)\dim_{\mathrm{k}}(A)$ for a real $\varepsilon$ with $0<\varepsilon\leq1$.
Then there exists a field $H$ that is finite-dimensional over $\mathrm{k}$ and
a finite non empty, subset $X$ of $K^{\ast}$ with $\left\vert X\right\vert
\leq\frac{2}{\varepsilon}-1$ such that $\mathrm{k}\langle A^{2}\rangle
\subseteq\bigoplus_{x\in X}xH$.
\end{corollary}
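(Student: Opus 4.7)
The plan is to extract the conclusion directly from the linear Kneser theorem (Theorem~\ref{Th_HL}) applied to the pair $(A,A)$, together with Lemma~\ref{lemma_stab} which produces the direct-sum decomposition. In particular, the separability hypothesis enters only through the invocation of Theorem~\ref{Th_HL}.

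First I would set $H:=H_{\mathrm{k}}(A^{2})$, which (since $K$ is commutative) is a field extension of $\mathrm{k}$ contained in $K$. Applying Theorem~\ref{Th_HL} with $A=B$ gives
\[
\dim_{\mathrm{k}}(A^{2}) \geq 2\dim_{\mathrm{k}}(A) - \dim_{\mathrm{k}}(H).
\]
Combined with the hypothesis $\dim_{\mathrm{k}}(A^{2}) \leq (2-\varepsilon)\dim_{\mathrm{k}}(A)$, this yields the key lower bound
\[
\dim_{\mathrm{k}}(H) \geq \varepsilon\,\dim_{\mathrm{k}}(A).
\]
In particular $\dim_{\mathrm{k}}(H)$ is finite, since for any nonzero $v\in\mathrm{k}\langle A^{2}\rangle$ one has $Hv\subseteq\mathrm{k}\langle A^{2}\rangle$, so $\dim_{\mathrm{k}}(H)=\dim_{\mathrm{k}}(Hv)\leq\dim_{\mathrm{k}}(A^{2})<\infty$.

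Next I would apply Lemma~\ref{lemma_stab} to the finite-dimensional $\mathrm{k}$-subspace $V:=\mathrm{k}\langle A^{2}\rangle$ and to the sub division ring $H\supseteq\mathrm{k}$, noting that by the very definition of $H=H_{\mathrm{k}}(A^{2})$ we have $HV\subseteq V$, and in fact $HV=V$ because $1\in H$. The lemma provides a finite subset $X\subseteq K^{\ast}$ such that
\[
\mathrm{k}\langle A^{2}\rangle \;=\; \bigoplus_{x\in X} Hx \;=\; \bigoplus_{x\in X} xH,
\]
the second equality holding because $K$ is commutative. This immediately gives the desired inclusion $\mathrm{k}\langle A^{2}\rangle\subseteq\bigoplus_{x\in X}xH$.

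Finally, counting dimensions in this decomposition, $\dim_{\mathrm{k}}(A^{2}) = |X|\cdot \dim_{\mathrm{k}}(H)$, so
\[
|X| \;=\; \frac{\dim_{\mathrm{k}}(A^{2})}{\dim_{\mathrm{k}}(H)} \;\leq\; \frac{(2-\varepsilon)\dim_{\mathrm{k}}(A)}{\varepsilon\,\dim_{\mathrm{k}}(A)} \;=\; \frac{2}{\varepsilon}-1,
\]
which is the required bound on $|X|$. There is no real obstacle here: the argument is essentially a one-shot application of linear Kneser to bound the stabilizer from below, followed by the standard direct-sum decomposition of a stable subspace. The only minor point requiring care is verifying that $H$ is finite-dimensional over $\mathrm{k}$ so that Lemma~\ref{lemma_stab} yields a \emph{finite} indexing set $X$, which was handled above via the inclusion $Hv\subseteq\mathrm{k}\langle A^{2}\rangle$.
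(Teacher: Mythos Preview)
Your proof is correct and follows essentially the same route as the paper: apply the linear Kneser theorem with $B=A$ to bound $\dim_{\mathrm{k}}(H_{\mathrm{k}}(A^{2}))$ from below by $\varepsilon\dim_{\mathrm{k}}(A)$, then invoke Lemma~\ref{lemma_stab} to decompose $\mathrm{k}\langle A^{2}\rangle$ and bound $|X|$ by the dimension ratio. You have in fact added a couple of clarifying details (finite-dimensionality of $H$, and why $HV=V$) that the paper leaves implicit.
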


\begin{proof}
By the previous theorem, we must have $\dim_{\mathrm{k}}(H_{\mathrm{k}}%
(A^{2}))\geq2\dim_{\mathrm{k}}(A)-\dim_{\mathrm{k}}(A^{2})\geq\varepsilon
\dim_{\mathrm{k}}(A)$. By Lemma \ref{lemma_stab}, there exists a finite subset
$X$ of $K^{\ast}$ such that $\mathrm{k}\langle A^{2}\rangle=\bigoplus_{x\in
X}xH$. We thus have
\[
\left\vert X\right\vert =\frac{\dim_{\mathrm{k}}(A^{2})}{\dim_{\mathrm{k}}%
(H)}\leq\frac{(2-\varepsilon)\dim_{\mathrm{k}}(A)}{\varepsilon\dim
_{\mathrm{k}}(A)}\leq\frac{2}{\varepsilon}-1
\]
as desired.
\end{proof}

\bigskip

\noindent\textbf{Remarks:}

\begin{enumerate}
\item Theorem \ref{Th_HL} can be regarded as a linear version of Kneser's
theorem.\ Recall that this theorem establishes that for any nonempty, finite
subsets $A$ and $B$ in an abelian group $G$ (written multiplicatively), we
have $\left\vert AB\right\vert \geq\left\vert A\right\vert +\left\vert
B\right\vert -\left\vert H\right\vert $, where $H$ is the stabilizer of $AB$
in $G$.\ 

\item As proved in \cite{Xian}, the linear Kneser theorem implies Kneser's
theorem. This follows from Proposition \ref{prop_link}.

\item The separability hypothesis is crucial in the proof of the theorem as
the finite extensions of $\mathrm{k}$ should have a finite number of
intermediate extensions. Nevertheless, it is conjectured in \cite{Hou} that
the separability hypothesis can be relaxed. Also observe that the separability
hypothesis is always satisfied in characteristic zero.
\end{enumerate}

Like the original Kneser theorem, Theorem \ref{Th_HL} can be generalized for
Minkowski products of any finite number of finite subsets of $K^{\ast}$. The
following theorem is not explicitly stated in \cite{Xian}. We give its proof
below for completion. We first need the following easy lemma.

\begin{lemma}
\label{lemma_util}Let $K$ be a commutative extension of $\mathrm{k}$. Consider
an integer $n\geq2$ and a collection of finite, nonempty subsets $A_{1}%
,\ldots,A_{n}$ of $K^{\ast}$ such that
\begin{equation}
\dim_{\mathrm{k}}(\prod_{i=1}^{j}A_{i})\geq\dim_{\mathrm{k}}(\prod_{i=1}%
^{j-1}A_{i})+\dim_{\mathrm{k}}(A_{j})-1 \label{Hr}%
\end{equation}
holds for $j=2,\ldots,n$.\ Then
\[
\dim_{\mathrm{k}}(\prod_{i=1}^{n}A_{i})\geq\sum_{i=1}^{n}\dim_{\mathrm{k}%
}(A_{i})-n+1\text{.}%
\]

\end{lemma}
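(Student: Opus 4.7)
The statement is purely combinatorial: a chain of step-by-step inequalities is being summed into one global inequality, so no structural facts about $K$, $\mathrm{k}$, or the spans are needed beyond what is already in the hypothesis. My plan is to dispose of it by a telescoping argument, which I find slightly cleaner than induction but is of course equivalent.

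Concretely, I would rewrite the hypothesis (\ref{Hr}) in the form
\[
\dim_{\mathrm{k}}\!\Bigl(\prod_{i=1}^{j}A_{i}\Bigr)-\dim_{\mathrm{k}}\!\Bigl(\prod_{i=1}^{j-1}A_{i}\Bigr)\geq \dim_{\mathrm{k}}(A_{j})-1
\]
for each $j=2,\ldots,n$, and then sum these $n-1$ inequalities over $j$. The left-hand side telescopes to $\dim_{\mathrm{k}}(\prod_{i=1}^{n}A_{i})-\dim_{\mathrm{k}}(A_{1})$, while the right-hand side equals $\sum_{j=2}^{n}\dim_{\mathrm{k}}(A_{j})-(n-1)$. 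Rearranging gives exactly
\[
\dim_{\mathrm{k}}\!\Bigl(\prod_{i=1}^{n}A_{i}\Bigr)\geq \sum_{i=1}^{n}\dim_{\mathrm{k}}(A_{i})-n+1,
\]
as desired.

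Alternatively, one can proceed by induction on $n$: the base case $n=2$ is just the hypothesis for $j=2$; for the inductive step, apply the induction hypothesis to $A_{1},\ldots,A_{n-1}$ (which get the bound $\sum_{i=1}^{n-1}\dim_{\mathrm{k}}(A_{i})-n+2$), then use (\ref{Hr}) with $j=n$ to add $\dim_{\mathrm{k}}(A_{n})-1$. The arithmetic matches up. Either way, there is no real obstacle here; the only thing to be careful about is bookkeeping the constant $-n+1$ versus $-(n-1)$ correctly when combining the telescoped terms, but this is arithmetic and not a genuine difficulty.
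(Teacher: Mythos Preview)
Your proof is correct and essentially the same as the paper's: the paper proceeds by induction on $j$, which is exactly your alternative argument, and your telescoping version is just the same computation packaged without the inductive phrasing. There is nothing to add.
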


\begin{proof}
We proceed by induction on $j$.\ For $j=2$, we have $\dim_{\mathrm{k}}%
(A_{1}A_{2})\geq\dim_{\mathrm{k}}(A_{1})+\dim_{\mathrm{k}}(A_{2})-1$ by
(\ref{Hr}). Assume we have%
\begin{equation}
\dim_{\mathrm{k}}(\prod_{i=1}^{j}A_{i})\geq\sum_{i=1}^{j}\dim_{\mathrm{k}%
}(A_{i})-j+1\text{.} \label{indu}%
\end{equation}
Writing (\ref{Hr}) with $j+1$ gives%
\[
\dim_{\mathrm{k}}(\prod_{i=1}^{j+1}A_{i})\geq\dim_{\mathrm{k}}(\prod_{i=1}%
^{j}A_{i})+\dim_{\mathrm{k}}(A_{j+1})-1.
\]
Combining with (\ref{indu}), one obtains%
\[
\dim_{\mathrm{k}}(\prod_{i=1}^{j+1}A_{i})\geq\sum_{i=1}^{j}\dim_{\mathrm{k}%
}(A_{i})-j+\dim_{\mathrm{k}}(A_{j+1})
\]
as desired.
\end{proof}

\begin{theorem}
\label{Th_Lin_Kne_n}Let $K$ be a commutative extension of $\mathrm{k}$.
Consider a collection of finite, nonempty subsets $A_{1},\ldots,A_{n}$ of
$K^{\ast}$ Set $H:=H_{\mathrm{k}}(A_{1}\cdots A_{n})$. The following
statements are equivalent:

\begin{enumerate}
\item $\dim_{\mathrm{k}}(A_{1}\cdots A_{n})\geq\sum_{i=1}^{n}\dim_{\mathrm{k}%
}(A_{i}H)-(n-1)\dim_{\mathrm{k}}(H)$,

\item $\dim_{\mathrm{k}}(A_{1}\cdots A_{n})\geq\sum_{i=1}^{n}\dim_{\mathrm{k}%
}(A_{i})-(n-1)\dim_{\mathrm{k}}(H)$,

\item either $\dim_{\mathrm{k}}(A_{1}\cdots A_{n})\geq\sum_{i=1}^{n}%
\dim_{\mathrm{k}}(A_{i})-(n-1)$ or $\mathrm{k}\langle A_{1}\cdots A_{n}%
\rangle$ is periodic,

\item any one of the above three statements in the case $n=2$.
\end{enumerate}
\end{theorem}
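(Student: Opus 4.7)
My plan is to prove the cycle $(1)\Rightarrow(4)\Rightarrow(3)\Rightarrow(2)\Rightarrow(1)$, reading each of $(1),(2),(3)$ as a universal assertion over all finite collections of nonempty subsets of $K^{\ast}$ and over all admissible base fields (and $(4)$ as the same assertion restricted to $n=2$). The implication $(1)\Rightarrow(4)$ is immediate specialization to $n=2$; the remaining three require actual work, and $(3)\Rightarrow(2)$ in the periodic case will be the most delicate.

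For $(4)\Rightarrow(3)$, I would induct on $n$ using Lemma~\ref{lemma_util} for the bookkeeping. Applied to each pair $\bigl(\prod_{i=1}^{j-1}A_{i},A_{j}\bigr)$ for $j=2,\ldots,n$, the $n=2$ case of $(3)$ returns one of two alternatives: either the incremental estimate
\[
\dim_{\mathrm{k}}\Bigl(\prod_{i=1}^{j}A_{i}\Bigr)\geq\dim_{\mathrm{k}}\Bigl(\prod_{i=1}^{j-1}A_{i}\Bigr)+\dim_{\mathrm{k}}(A_{j})-1
\]
holds, or the intermediate span $\mathrm{k}\langle\prod_{i=1}^{j}A_{i}\rangle$ is periodic. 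If the incremental estimate holds for every $j$, Lemma~\ref{lemma_util} delivers the first alternative of $(3)$; if periodicity occurs at some $j$, the observation from Section~2.1 that periodicity propagates under multiplication forces $\mathrm{k}\langle A_{1}\cdots A_{n}\rangle$ itself to be periodic, which is the second alternative.

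For $(3)\Rightarrow(2)$, I would split on whether $\mathrm{k}\langle A_{1}\cdots A_{n}\rangle$ is periodic. If not, then $H=\mathrm{k}$, $\dim_{\mathrm{k}}(H)=1$, and the first alternative of $(3)$ is literally $(2)$. In the periodic case I would change base field from $\mathrm{k}$ to $H$: the embedding $h\mapsto hv$ for a fixed nonzero $v\in\mathrm{k}\langle A_{1}\cdots A_{n}\rangle$ shows $\dim_{\mathrm{k}}(H)\leq\dim_{\mathrm{k}}(A_{1}\cdots A_{n})<\infty$, so $H/\mathrm{k}$ is finite algebraic and the separability hypothesis underlying Theorem~\ref{Th_HL} transfers to $K/H$. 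By the very definition of $H$ one has $H_{H}(A_{1}\cdots A_{n})=H$, so $H\langle A_{1}\cdots A_{n}\rangle$ is not periodic over $H$. Invoking $(3)$ at base $H$ therefore gives
\[
\dim_{H}(A_{1}\cdots A_{n})\geq\sum_{i=1}^{n}\dim_{H}(A_{i})-(n-1),
\]
and multiplying by $\dim_{\mathrm{k}}(H)$, together with $\dim_{H}(A_{i})\dim_{\mathrm{k}}(H)=\dim_{\mathrm{k}}(A_{i}H)\geq\dim_{\mathrm{k}}(A_{i})$, recovers $(2)$ (in fact $(1)$).

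Finally, $(2)\Rightarrow(1)$ reduces to a coset-replacement trick: for each $i$ choose a finite $B_{i}\subseteq K^{\ast}$ with $\mathrm{k}\langle B_{i}\rangle=\mathrm{k}\langle A_{i}H\rangle$. Since $\mathrm{k}\langle A_{1}\cdots A_{n}\rangle$ is already $H$-stable, $\mathrm{k}\langle B_{1}\cdots B_{n}\rangle=\mathrm{k}\langle A_{1}\cdots A_{n}\rangle$ and $H_{\mathrm{k}}(B_{1}\cdots B_{n})=H$; applying $(2)$ to $(B_{1},\ldots,B_{n})$ and using $\dim_{\mathrm{k}}(B_{i})=\dim_{\mathrm{k}}(A_{i}H)$ yields $(1)$ for the original tuple. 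The hard part will be justifying cleanly that $(3)$ may be re-applied at the new base $H$ in the periodic case; it is precisely this step that forces the universal reading of the four statements.
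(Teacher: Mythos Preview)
Your proof is correct and follows essentially the same route as the paper. The paper organizes the equivalences as $1\Rightarrow 2\Rightarrow 3\Rightarrow 1$ (with $3\Rightarrow 1$ via the base change to $H$, exactly as in your $(3)\Rightarrow(2)$ step, which you rightly note already gives $(1)$) and then separately $3\Leftrightarrow 3'$ (the $n=2$ case) via Lemma~\ref{lemma_util} and propagation of periodicity, exactly as in your $(4)\Rightarrow(3)$. Two small remarks: your coset-replacement argument for $(2)\Rightarrow(1)$ is correct and pleasant but not in the paper, which instead takes the trivial reverse direction $1\Rightarrow 2$; and your invocation of separability in the base-change step is superfluous, since (as the paper's Remark after the theorem notes) the equivalence of the four statements holds without any separability hypothesis.
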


\begin{proof}
(I): We obtain $1\Rightarrow2$ by using $\dim_{\mathrm{k}}(A_{i}H)\geq
\dim_{\mathrm{k}}(A_{i})$ for any $i=1,\ldots,n$.\ The implication
$2\Longrightarrow3$ is immediate.\ To prove $3\Longrightarrow1$, we first
observe the implication is true when $H=\mathrm{k}$. Note that $H(A_{1}\cdots
A_{n})$ is, by definition, not periodic. When $\mathrm{k}\langle A_{1}\cdots
A_{n}\rangle$ is periodic using the base field $\mathrm{k}$, it is not
periodic using the base field $H$. We thus can apply assertion $3$ by
considering $K$ as a commutative extension of $H$. This gives $\dim_{H}%
(A_{1}\cdots A_{n})\geq\sum_{i=1}^{n}\dim_{H}(A_{i})-(n-1)$.\ Multiplying this
inequality by $\dim_{\mathrm{k}}(H)$ yields%
\[
\dim_{H}(A_{1}\cdots A_{n})\dim_{\mathrm{k}}(H)\geq\sum_{i=1}^{n}\dim
_{H}(A_{i})\dim_{\mathrm{k}}(H)-(n-1)\dim_{\mathrm{k}}(H)
\]
which is equivalent to $1$ since $\dim_{H}(A_{1}\cdots A_{n})\dim_{\mathrm{k}%
}(H)=\dim_{\mathrm{k}}(A_{1}\cdots A_{n})$ and $\dim_{H}(A_{i})\dim
_{\mathrm{k}}(H)=\dim_{\mathrm{k}}(A_{i}H)$ for any $i=1,\ldots,n$.

(II): It remains to prove that assertion $3$ is equivalent to the following:

\begin{enumerate}
\item[3'] Given two finite, nonempty subsets $A$ and $B$ of $K^{\ast}$, either
$\dim_{\mathrm{k}}(AB)\geq\dim_{\mathrm{k}}(A)+\dim_{\mathrm{k}}(B)-1$ or
$\mathrm{k}\langle AB\rangle$ is periodic$.$
\end{enumerate}

Clearly $3\Longrightarrow3^{\prime}$.\ Now assume $3^{\prime}$ holds and
$\dim_{\mathrm{k}}(A_{1}\cdots A_{n})<\sum_{i=1}^{n}\dim_{\mathrm{k}}%
(A_{i})-(n-1)$.\ Then, by Lemma \ref{lemma_util}, there exists $j\in
\{2,\ldots,n\}$ such that
\[
\dim_{\mathrm{k}}(\prod_{i=1}^{j}A_{i})<\dim_{\mathrm{k}}(\prod_{i=1}%
^{j-1}A_{i})+\dim_{\mathrm{k}}(A_{j})-1.
\]
By applying $3^{\prime}$ with $A=\prod_{i=1}^{j-1}A_{i}$ and $B=A_{j}$, we
obtain that $\mathrm{k}\langle AB\rangle=\mathrm{k}\langle\prod_{i=1}^{j}%
A_{i}\rangle$ is periodic. Therefore $\mathrm{k}\langle\prod_{i=1}^{n}%
A_{i}\rangle$ is also periodic. This shows that $3^{\prime}\Longrightarrow3$.
\end{proof}

\bigskip

\noindent\textbf{Remark:} The four assertions of the theorem are equivalent
without the separability hypothesis of Theorem \ref{Th_HL}.

\section{Pl\"{u}nnecke-type estimates in division rings}

\label{Sec_Plum_K}

Pl\"{u}nnecke-type estimates permit one to obtain upper bounds on the
cardinality of sumsets in abelian groups as stated in Theorem \ref{Th_plum}.
Pl\"{u}nnecke's result was first stated for $G=\mathbb{Z}$, but his proof,
based on a graph-theoretic method, can be extended to arbitrary abelian
groups. Very recently, Petridis gave a surprisingly elegant and short proof of
Theorem \ref{Th_plum}.\ This proof can be adapted to the context of division
rings as explained in the following paragraphs. In Section \ref{Sec_alge}, we
will consider the case of associative unital algebras.

\subsection{Minimal growth under multiplication}

Let $K$ be a division ring containing the field $\mathrm{k}$ in its center.
Consider two finite subsets $A$ and $B$ of $K^{\ast}$. For any $\mathrm{k}%
$-subspace $V\neq\{0\}$ of $\mathrm{k}\langle A\rangle$ (hence $V$ is
finite-dimensional), we set
\begin{equation}
r(V):=\frac{\dim_{\mathrm{k}}(VB)}{\dim_{\mathrm{k}}(V)}\label{r(Z)}%
\end{equation}
for the growth of $V$ under multiplication by $B$. Write $\rho:=\min
_{V\subseteq\mathrm{k}\langle A\rangle,V\neq\{0\}}r(V)$. Since the image of
the map $r$ is contained in a discrete set of positive numbers, there exists a
nonempty set $X\subseteq\mathrm{k}\langle A\rangle$ such that $r(\mathrm{k}%
\langle X\rangle)=\rho$. We thus have, $\dim_{\mathrm{k}}(XB)=\rho
\dim_{\mathrm{k}}(X)$ and $\dim_{\mathrm{k}}(XB)/\dim_{\mathrm{k}}(X)\leq
\dim_{\mathrm{k}}(ZB)/\dim_{\mathrm{k}}(Z)$ for any $Z\subseteq\mathrm{k}%
\langle A\rangle$.

\begin{proposition}
\label{prop-CXB}Under the previous hypotheses, we have, for any finite set $C$
in $K^{\ast}$,%
\[
\dim_{\mathrm{k}}(CXB)\leq\rho\dim_{\mathrm{k}}(CX)=\frac{\dim_{\mathrm{k}%
}(CX)\dim_{\mathrm{k}}(XB)}{\dim_{\mathrm{k}}(X)}.
\]

\end{proposition}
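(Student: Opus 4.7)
The plan is to adapt Petridis' induction on $\left\vert C\right\vert $ to the division ring setting. The base case $\left\vert C\right\vert =1$, say $C=\{c\}$, is immediate: since $\mathrm{k}$ lies in the center of $K$, left multiplication by $c\in K^{\ast}$ is a $\mathrm{k}$-linear automorphism of $K$, so $\dim_{\mathrm{k}}(cX)=\dim_{\mathrm{k}}(X)$ and $\dim_{\mathrm{k}}(cXB)=\dim_{\mathrm{k}}(XB)=\rho\dim_{\mathrm{k}}(X)$ by the defining property of $X$. For the inductive step I would pick some $c\in C$, set $C^{\prime}:=C\setminus\{c\}$, and expand $\mathrm{k}\langle CXB\rangle=\mathrm{k}\langle C^{\prime}XB\rangle+c\,\mathrm{k}\langle XB\rangle$, so that inclusion--exclusion gives
\[
\dim_{\mathrm{k}}(CXB)=\dim_{\mathrm{k}}(C^{\prime}XB)+\dim_{\mathrm{k}}(XB)-\dim_{\mathrm{k}}\bigl(\mathrm{k}\langle C^{\prime}XB\rangle\cap c\,\mathrm{k}\langle XB\rangle\bigr).
\]
The induction hypothesis handles the first term by $\dim_{\mathrm{k}}(C^{\prime}XB)\leq\rho\dim_{\mathrm{k}}(C^{\prime}X)$, and the base case gives $\dim_{\mathrm{k}}(XB)=\rho\dim_{\mathrm{k}}(X)$; the real work lies in producing a sharp lower bound for the intersection term.

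Here I would invoke Petridis' trick and introduce
\[
Z:=\{v\in\mathrm{k}\langle X\rangle\mid cv\in\mathrm{k}\langle C^{\prime}X\rangle\},
\]
the largest $\mathrm{k}$-subspace of $\mathrm{k}\langle X\rangle$ that $c$ sends into $\mathrm{k}\langle C^{\prime}X\rangle$. Multiplying on the right by $\mathrm{k}\langle B\rangle$ yields $cZB\subseteq\mathrm{k}\langle C^{\prime}XB\rangle\cap c\,\mathrm{k}\langle XB\rangle$, so the intersection has dimension at least $\dim_{\mathrm{k}}(ZB)$. Since $Z\subseteq\mathrm{k}\langle A\rangle$, the minimality of $\rho$ applied to any finite $\mathrm{k}$-basis of $Z$ yields $\dim_{\mathrm{k}}(ZB)\geq\rho\dim_{\mathrm{k}}(Z)$ (trivial when $Z=\{0\}$). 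To turn this into the required estimate I would appeal to the short exact sequence
\[
0\longrightarrow Z\longrightarrow\mathrm{k}\langle X\rangle\overset{c\,\cdot\,}{\longrightarrow}\mathrm{k}\langle CX\rangle/\mathrm{k}\langle C^{\prime}X\rangle\longrightarrow 0,
\]
whose surjectivity reflects $\mathrm{k}\langle CX\rangle=\mathrm{k}\langle C^{\prime}X\rangle+c\,\mathrm{k}\langle X\rangle$; this gives $\dim_{\mathrm{k}}(Z)=\dim_{\mathrm{k}}(X)-\bigl(\dim_{\mathrm{k}}(CX)-\dim_{\mathrm{k}}(C^{\prime}X)\bigr)$. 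Substituting everything back into the inclusion--exclusion formula, the $\rho\dim_{\mathrm{k}}(C^{\prime}X)$ contributions cancel and one is left with $\dim_{\mathrm{k}}(CXB)\leq\rho\dim_{\mathrm{k}}(CX)$, completing the induction.

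The delicate point, and the only place where noncommutativity could conceivably bite, is the need for left multiplication by $c$ to be a genuine $\mathrm{k}$-linear map and for $c\,\mathrm{k}\langle XB\rangle$ to coincide with $\mathrm{k}\langle cXB\rangle$; both facts rest squarely on the hypothesis $\mathrm{k}\subseteq Z(K)$. One must also observe that $Z$, although defined abstractly as a subspace rather than as the span of an explicit finite set, is still a legitimate competitor for the minimum defining $\rho$: this is automatic on choosing a finite $\mathrm{k}$-basis of $Z$ inside $\mathrm{k}\langle A\rangle$.
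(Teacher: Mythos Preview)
Your proof is correct and follows essentially the same route as the paper's: the subspace you call $Z$ is exactly the paper's $V_{r}=c_{r}^{-1}\bigl(\sum_{a<r}c_{a}\mathrm{k}\langle X\rangle\bigr)\cap\mathrm{k}\langle X\rangle$, and the induction together with the minimality bound $\dim_{\mathrm{k}}(ZB)\geq\rho\dim_{\mathrm{k}}(Z)$ is identical. Your presentation via inclusion--exclusion and the short exact sequence is slightly more streamlined than the paper's explicit construction of the auxiliary subsets $X_{1},\ldots,X_{r}$ and the complement $W$, but the substance is the same.
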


\begin{proof}
Write $C=\{c_{1},\ldots,c_{r}\}$.\ We construct by induction subsets
$X_{1},\ldots,X_{r}$ of $X$ such that%
\[
\sum_{i=1}^{j}c_{i}\mathrm{k}\langle X\rangle=\bigoplus_{i=1}^{j}%
c_{i}\mathrm{k}\langle X_{i}\rangle
\]
for any $j=1,\ldots r$. Set $X_{1}=X$. Now assume we have constructed subsets
$X_{1},\ldots,X_{j-1}$ of $X$ such that%
\[
\sum_{i=1}^{j-1}c_{i}\mathrm{k}\langle X\rangle=\bigoplus_{i=1}^{j-1}%
c_{i}\mathrm{k}\langle X_{i}\rangle.
\]
We have
\[
\sum_{i=1}^{j}c_{i}\mathrm{k}\langle X\rangle=\bigoplus_{i=1}^{j-1}%
c_{i}\mathrm{k}\langle X_{i}\rangle+c_{j}\mathrm{k}\langle X\rangle\text{.}%
\]
Let $B_{j-1}$ be any basis of $\bigoplus_{i=1}^{j-1}c_{i}\mathrm{k}\langle
X_{i}\rangle$. Then $B_{j-1}\cup c_{j}X$ generates $\sum_{i=1}^{j}%
c_{i}\mathrm{k}\langle X\rangle$. There thus exists a subset $X_{j}\subseteq
X$ such that

$B_{j-1}\cup c_{j}X_{j}$ is a basis of $\sum_{i=1}^{j}c_{i}\mathrm{k}\langle
X\rangle$. We then have%
\[
\sum_{i=1}^{j}c_{i}\mathrm{k}\langle X\rangle=\bigoplus_{i=1}^{j}%
c_{i}\mathrm{k}\langle X_{i}\rangle
\]
as desired.

By construction of the sets $X_{i}$, we have
\begin{equation}
\dim_{\mathrm{k}}(\sum_{i=1}^{j}c_{i}\mathrm{k}\langle X\rangle)=\sum
_{i=1}^{j}\dim_{\mathrm{k}}(c_{i}X_{i})=\sum_{i=1}^{j}\dim_{\mathrm{k}}(X_{i})
\label{deco}%
\end{equation}
for any $j=1,\ldots,r$.

As in the proof of Petridis, we now proceed by induction on $r$. When $r=1$,
$\dim_{\mathrm{k}}(c_{1}XB)=\dim_{\mathrm{k}}(XB)=\rho\dim_{\mathrm{k}%
}(X)=\rho\dim_{\mathrm{k}}(c_{1}X)$. Assume $r>1$. Set $V_{r}:=c_{r}^{-1}%
\sum_{a=1}^{r-1}c_{a}\mathrm{k}\langle X\rangle\cap\mathrm{k}\langle X\rangle$
and $\mathrm{k}\langle X\rangle=\mathrm{k}\langle X_{r}\rangle\oplus V_{r}$.
We then have $c_{r}V_{r}\subseteq\sum_{a=1}^{r-1}c_{a}\mathrm{k}\langle
X\rangle$ and $c_{r}\mathrm{k}\langle V_{r}B\rangle\subseteq\sum_{a=1}%
^{r-1}c_{a}\mathrm{k}\langle XB\rangle$. Since $\mathrm{k}\langle
V_{r}B\rangle$ is a subspace of $\mathrm{k}\langle XB\rangle$, this gives
\[
\mathrm{k}\langle CXB\rangle=\sum_{a=1}^{r}c_{a}\mathrm{k}\langle
XB\rangle=\sum_{a=1}^{r-1}c_{a}\mathrm{k}\langle XB\rangle+c_{r}%
\mathrm{k}\langle XB\rangle=\sum_{a=1}^{r-1}c_{a}\mathrm{k}\langle
XB\rangle+c_{r}W
\]
where $W$ is a $\mathrm{k}$-subspace of $\mathrm{k}\langle XB\rangle$ such
that $\mathrm{k}\langle XB\rangle=W\oplus\mathrm{k}\langle V_{r}B\rangle$.\ We
have, in particular, $\dim_{\mathrm{k}}(c_{r}W)=\dim_{\mathrm{k}}%
(W)=\dim_{\mathrm{k}}(XB)-\dim_{\mathrm{k}}(V_{r}B)$. We thus obtain%
\begin{equation}
\dim_{\mathrm{k}}(CXB)\leq\dim_{\mathrm{k}}(\sum_{a=1}^{r-1}c_{a}%
\mathrm{k}\langle XB\rangle)+\dim_{\mathrm{k}}(XB)-\dim_{\mathrm{k}}(V_{r}B).
\label{ineq2}%
\end{equation}
By the induction hypothesis, we have%
\[
\dim_{\mathrm{k}}(\sum_{a=1}^{r-1}c_{a}\mathrm{k}\langle XB\rangle
)=\dim_{\mathrm{k}}(C^{\prime}XB)\leq\rho\dim_{\mathrm{k}}(C^{\prime}%
X)=\rho\dim_{\mathrm{k}}(\sum_{a=1}^{r-1}c_{a}\mathrm{k}\langle X\rangle)
\]
with $C^{\prime}=C\setminus\{c_{r}\}$. Thus by using (\ref{deco}) with
$j=r-1,$ one gets%
\[
\dim_{\mathrm{k}}(\sum_{a=1}^{r-1}c_{a}\mathrm{k}\langle XB\rangle)\leq
\rho\sum_{a=1}^{r-1}\dim_{\mathrm{k}}(c_{a}X_{a}).
\]
Since $V_{r}\subseteq\mathrm{k}\langle X\rangle,$ we have $\dim_{\mathrm{k}%
}(V_{r}B)\geq\rho\dim_{\mathrm{k}}(V_{r})$. By definition of $X$, we have
$\dim_{\mathrm{k}}(XB)=\rho\dim_{\mathrm{k}}(X)$.\ Therefore
\[
\dim_{\mathrm{k}}(XB)-\dim_{\mathrm{k}}(V_{r}B)\leq\rho\dim_{\mathrm{k}%
}(X)-\rho\dim_{\mathrm{k}}(V_{r})\leq\rho\dim_{\mathrm{k}}(X_{r}).
\]
Combining the two previous inequalities with (\ref{ineq2}), we finally obtain%
\begin{multline*}
\dim_{\mathrm{k}}(CXB)\leq\rho\sum_{a=1}^{r-1}\dim_{\mathrm{k}}(c_{a}%
X_{a})+\rho\dim_{\mathrm{k}}(c_{r}X_{r})\leq\\
\rho\sum_{a=1}^{r}\dim_{\mathrm{k}}(c_{a}X_{a})=\rho\dim_{\mathrm{k}}%
(\sum_{a=1}^{r}c_{a}\mathrm{k}\langle X\rangle)=\rho\dim_{\mathrm{k}}(CX).
\end{multline*}
where the second to last equality is obtained by (\ref{deco}) with $j=r$.
\end{proof}

\bigskip

\begin{corollary}
\label{Cor_petri}Let $K$ be a division ring containing the field $\mathrm{k}$
in its center. Consider two finite subsets $A$ and $B$ of $K^{\ast}$. Assume
$\alpha$ is a positive real such that $\dim_{\mathrm{k}}(AB)\leq\alpha
\dim_{\mathrm{k}}(A)$. Then there exists a subset $X\subseteq\mathrm{k}\langle
A\rangle$ such that, for any finite subset $C$ of $K^{\ast}$, $\dim
_{\mathrm{k}}(CXB)\leq\alpha\dim_{\mathrm{k}}(CX)$.
\end{corollary}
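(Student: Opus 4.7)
The plan is to deduce the corollary almost immediately from Proposition \ref{prop-CXB}, which has already done the real work. The only thing left is to observe that the subspace $\mathrm{k}\langle A\rangle$ itself is a candidate in the minimization defining $\rho$, and hence $\rho \le \alpha$.

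More concretely, I would first recall the setup from the paragraph preceding Proposition \ref{prop-CXB}: set $\rho := \min_{V \subseteq \mathrm{k}\langle A\rangle,\, V \neq \{0\}} r(V)$, where $r(V) = \dim_{\mathrm{k}}(VB)/\dim_{\mathrm{k}}(V)$. Since the values of $r$ lie in a discrete set of positive rationals bounded below by $1$, the minimum is attained, so one can pick a finite subset $X \subseteq \mathrm{k}\langle A\rangle$ (obtained, say, as a basis of a minimizing subspace) with $r(\mathrm{k}\langle X\rangle) = \rho$, i.e.\ $\dim_{\mathrm{k}}(XB) = \rho \dim_{\mathrm{k}}(X)$.

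Next, I would apply Proposition \ref{prop-CXB} to this particular $X$ and to an arbitrary finite subset $C$ of $K^\ast$, obtaining
\[
\dim_{\mathrm{k}}(CXB) \le \rho \dim_{\mathrm{k}}(CX).
\]
It then remains to bound $\rho$ by $\alpha$. For this I would simply take $V = \mathrm{k}\langle A\rangle$ as a test subspace in the definition of $\rho$: by hypothesis,
\[
r(\mathrm{k}\langle A\rangle) = \frac{\dim_{\mathrm{k}}(AB)}{\dim_{\mathrm{k}}(A)} \le \alpha,
\]
so $\rho \le \alpha$ and consequently $\dim_{\mathrm{k}}(CXB) \le \alpha \dim_{\mathrm{k}}(CX)$, as required.

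There is no serious obstacle here, since Proposition \ref{prop-CXB} already provides the substantive inequality; the corollary is just the packaging that replaces the optimal ratio $\rho$ by the given upper bound $\alpha$. The only minor point to be a little careful about is that $X$ is produced as a (finite) subset whose $\mathrm{k}$-span realizes the minimum, which is why the statement asserts $X \subseteq \mathrm{k}\langle A\rangle$ rather than $X \subseteq A$.
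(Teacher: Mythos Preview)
Your proposal is correct and follows essentially the same approach as the paper: choose $X$ so that $r(\mathrm{k}\langle X\rangle)=\rho$, use $V=\mathrm{k}\langle A\rangle$ as a test subspace to get $\rho\le\alpha$, and then apply Proposition~\ref{prop-CXB}. This is exactly what the paper does.
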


\begin{proof}
Let $X\subseteq\mathrm{k}\langle A\rangle$ be such that $\rho=r(X)$.\ We have
\[
\rho=\frac{\dim_{\mathrm{k}}(XB)}{\dim_{\mathrm{k}}(X)}\leq\frac
{\dim_{\mathrm{k}}(AB)}{\dim_{\mathrm{k}}(A)}\leq\alpha
\]
by definition of $\rho$. We then apply Proposition \ref{prop-CXB}, which
yields $\dim_{\mathrm{k}}(CXB)\leq\alpha\dim_{\mathrm{k}}(CX)$.
\end{proof}

\subsection{Pl\"{u}nnecke upper bounds for $\dim_{\mathrm{k}}(AB)$}

We assume in this paragraph that $K$ is a \emph{commutative extension} of
$\mathrm{k}$. The following theorem can be regarded as a linear version of
Theorem \ref{Th_plum}. In fact, it is a linear version of the slightly
stronger result obtained by Petridis where $X$ is the same for any positive
integer $n$.

\begin{theorem}
\label{Th_plunn}Let $A$ and $B$ be nonempty, finite subsets in $K^{\ast}%
$.\ Assume that $\dim_{\mathrm{k}}(AB)\leq\alpha\dim_{\mathrm{k}}(A)$ where
$\alpha$ is a positive real. Then there exists a subset $X\subseteq
\mathrm{k}\langle A\rangle$ such that, for any positive integer $n$,
\[
\dim_{\mathrm{k}}(XB^{n})\leq\alpha^{n}\dim_{\mathrm{k}}(X)\text{.}%
\]
In particular, $\dim_{\mathrm{k}}(A^{2})\leq\alpha\dim_{\mathrm{k}}(A)$
implies that $\dim(A^{n})\leq\alpha^{n}\dim(A)$.
\end{theorem}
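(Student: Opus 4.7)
The plan is to combine Corollary~\ref{Cor_petri} with a straightforward induction on $n$, exploiting the commutativity of $K$ to rewrite the relevant Minkowski products at each stage.

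First I would apply Corollary~\ref{Cor_petri} to the pair $(A,B)$ to obtain a nonempty subset $X\subseteq\mathrm{k}\langle A\rangle$ such that, for every finite subset $C$ of $K^{\ast}$,
\[
\dim_{\mathrm{k}}(CXB)\leq\alpha\dim_{\mathrm{k}}(CX).
\]
Taking $C=\{1\}$ gives the base case $\dim_{\mathrm{k}}(XB)\leq\alpha\dim_{\mathrm{k}}(X)$, and I claim the same $X$ works for every $n$.

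For the inductive step, suppose $\dim_{\mathrm{k}}(XB^{n})\leq\alpha^{n}\dim_{\mathrm{k}}(X)$. Apply the displayed inequality with $C=B^{n}$, a finite subset of $K^{\ast}$. This gives $\dim_{\mathrm{k}}(B^{n}XB)\leq\alpha\dim_{\mathrm{k}}(B^{n}X)$. Since $K$ is commutative over $\mathrm{k}$, we have the set equalities $B^{n}X=XB^{n}$ and $B^{n}XB=XB^{n+1}$, hence
\[
\dim_{\mathrm{k}}(XB^{n+1})\leq\alpha\dim_{\mathrm{k}}(XB^{n})\leq\alpha^{n+1}\dim_{\mathrm{k}}(X).
\]
For the ``in particular'' statement, specialize $B=A$: the hypothesis reads $\dim_{\mathrm{k}}(A^{2})\leq\alpha\dim_{\mathrm{k}}(A)$ and yields $X\subseteq\mathrm{k}\langle A\rangle$ with $\dim_{\mathrm{k}}(XA^{n})\leq\alpha^{n}\dim_{\mathrm{k}}(X)$. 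Picking any nonzero $x\in X$, the map $y\mapsto xy$ is a $\mathrm{k}$-linear bijection of $K$, so $\dim_{\mathrm{k}}(A^{n})=\dim_{\mathrm{k}}(xA^{n})\leq\dim_{\mathrm{k}}(XA^{n})\leq\alpha^{n}\dim_{\mathrm{k}}(X)\leq\alpha^{n}\dim_{\mathrm{k}}(A)$, the last inequality because $X\subseteq\mathrm{k}\langle A\rangle$ forces $\dim_{\mathrm{k}}(X)\leq\dim_{\mathrm{k}}(A)$.

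The only subtlety is the use of commutativity to identify $B^{n}XB$ with $XB^{n+1}$: Corollary~\ref{Cor_petri} only permits tacking on a single copy of $B$ on the right of $X$, so without the freedom to shuffle $X$ past an arbitrary block of $B$'s the inductive step would not close. This is precisely why the theorem is stated for commutative extensions rather than for general division rings, even though Corollary~\ref{Cor_petri} itself holds in the noncommutative setting.
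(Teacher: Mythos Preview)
Your proof is correct and essentially identical to the paper's: both choose the minimizing $X$, handle $n=1$ directly, and for the inductive step apply the key bound with $C=B^{n-1}$ (you write it as $C=B^{n}$ because your induction is indexed one step higher), then invoke commutativity to swap $X$ past the block of $B$'s. The only cosmetic differences are that you quote Corollary~\ref{Cor_petri} (the $\alpha$-form) where the paper quotes Proposition~\ref{prop-CXB} (the $\rho$-form) and then bounds $\rho\leq\alpha$, and that you spell out the ``in particular'' clause via the injection $y\mapsto xy$ while the paper leaves it implicit.
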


\begin{proof}
The proof is by induction on $n$.\ Let $X$ be such that $\rho=r(X)$. For
$n=1$, we have
\[
\dim_{\mathrm{k}}(XB)=\dim_{\mathrm{k}}(X)\rho\leq\dim_{\mathrm{k}}%
(X)\frac{\dim_{\mathrm{k}}(AB)}{\dim_{\mathrm{k}}(A)}\leq\alpha\dim
_{\mathrm{k}}(X).
\]
For any $n>1$, we set $C=B^{n-1}$.\ By applying Proposition \ref{prop-CXB}
with $C=B^{n-1}$, we have
\[
\dim_{\mathrm{k}}(XB^{n})=\dim_{\mathrm{k}}(B^{n-1}XB)\leq\frac{\dim
_{\mathrm{k}}(B^{n-1}X)\dim_{\mathrm{k}}(XB)}{\dim_{\mathrm{k}}(X)}.
\]
Next, by the induction hypothesis, $\dim_{\mathrm{k}}(B^{n-1}X)=\dim
_{\mathrm{k}}(XB^{n-1})\leq\alpha^{n-1}\dim(X)$. Therefore%
\[
\dim_{\mathrm{k}}(XB^{n})\leq\alpha^{n-1}\dim_{\mathrm{k}}(XB)\leq\alpha
^{n}\dim_{\mathrm{k}}(X)
\]
where the last inequality follows from the case $n=1$.
\end{proof}

\bigskip

\noindent\textbf{Remarks:}

\begin{enumerate}
\item Observe that commutativity is crucial in the previous proof.
Nevertheless, the conclusion of Theorem \ref{Th_plunn} remains valid in a
division ring with the additional assumption%
\[
ab=ba\text{ for any }a\in A\text{ and }b\in B\text{.}%
\]
Indeed, we then have $xy=yx$ for any $x\in\mathrm{k}\langle A\rangle$ and
$y\in\mathrm{k}\langle B\rangle$. So in the previous proof we still have
$\mathrm{k}\langle XB^{n}\rangle=\mathrm{k}\langle B^{n-1}XB\rangle$ and
$\mathrm{k}\langle B^{n-1}X\rangle=\mathrm{k}\langle XB^{n-1}\rangle$ since
$X\subset\mathrm{k}\langle A\rangle$.

\item Assertions 1 and 2 of Proposition \ref{prop_link} permits one to recover
the Pl\"{u}nnecke-Ruzsa cardinality estimates for abelian groups.
\end{enumerate}

\subsection{Double and triple product}

We now show how to estimate, in a field $K$, the dimension of the vector span
generated by a triple products set in terms of the dimensions of the vector
spans of the corresponding double product set. This is a linear version of
Theorem 1.9.2 in \cite{Ruz} :

\begin{theorem}
Let $A,B$ and $C$ be finite subsets of a group $G$. Then%
\[
\left\vert ABC\right\vert ^{2}\leq\left\vert AB\right\vert \left\vert
BC\right\vert \max_{b\in B}\left\vert AbC\right\vert .
\]

\end{theorem}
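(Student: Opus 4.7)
The plan is to adapt the standard Ruzsa-type double counting argument. For each $x \in ABC$ fix once and for all a decomposition $x = a_x b_x c_x$ with $a_x \in A$, $b_x \in B$, $c_x \in C$, and consider the map
\[
\phi : ABC \times ABC \longrightarrow AB \times BC, \qquad \phi(x, x') = (a_x b_{x'},\; b_x c_{x'}).
\]
By construction $|ABC|^2 = \sum_{(y, z) \in AB \times BC} |\phi^{-1}(y, z)|$, so the task reduces to controlling the fiber sizes of $\phi$.

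A first attempt would be to establish the pointwise bound $|\phi^{-1}(y, z)| \leq M$ (with $M = \max_{b \in B} |AbC|$), which would prove the theorem immediately. However this is false in general: already for $A = B = C = \{0, 1\} \subset \mathbb{Z}$ with the decompositions $a_k + b_k + c_k$ chosen in some natural way, one can exhibit a fiber of size strictly larger than $M$. Instead, I would apply Cauchy--Schwarz
\[
|ABC|^4 \;\leq\; |AB| \cdot |BC| \cdot \sum_{(y,z) \in AB \times BC} |\phi^{-1}(y,z)|^2
\]
and reduce the theorem to the second-moment estimate
\[
\sum_{(y,z) \in AB \times BC} |\phi^{-1}(y,z)|^2 \;\leq\; |AB| \cdot |BC| \cdot M^2. \qquad (\star)
\]

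The left-hand side of $(\star)$ enumerates quadruples $(x_1, x_2, x_1', x_2') \in (ABC)^4$ satisfying the two coincidence conditions $a_{x_1} b_{x_1'} = a_{x_2} b_{x_2'}$ and $b_{x_1} c_{x_1'} = b_{x_2} c_{x_2'}$. To prove $(\star)$, pick $b^* \in B$ realising $|A b^* C| = M$ and construct an injection from this set of quadruples into $AB \times BC \times (A b^* C)^2$, sending each such quadruple to $(a_{x_1} b_{x_1'},\, b_{x_1} c_{x_1'},\, a_{x_1} b^* c_{x_1'},\, a_{x_2} b^* c_{x_2'})$ or a similar composition. The two coincidence equations together with the uniqueness of the chosen decompositions should permit one to recover $(x_1, x_2, x_1', x_2')$ from this image.

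The main obstacle is verifying that such an auxiliary injection indeed exists in the non-commutative setting: the auxiliary elements $a_{x_i} b^* c_{x_j'}$ need to carry enough information to separate quadruples while lying in a single $A b^* C$, and this requires careful tracking of multiplication order and the role of $b^*$ in bridging the two coincidence relations. This is where the argument is most delicate and where the maximality of $|A b^* C|$ enters in an essential way.
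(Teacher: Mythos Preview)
Your proposed injection does not work, and this is a genuine gap rather than a detail. Take $G=\mathbb{Z}$, $A=B=C=\{0,1\}$, and fix the decompositions $0=0+0+0$, $1=0+0+1$, $2=0+1+1$, $3=1+1+1$. The fiber $\phi^{-1}(1,1)$ consists of the five pairs $(0,2),(0,3),(1,2),(1,3),(3,0)$; for every one of them $a_x+b^*+c_{x'}=1$ (with either choice of $b^*$), so all $25$ quadruples in $\phi^{-1}(1,1)^2$ land on the single point $(1,1,1,1)$ under your map. No cosmetic variant of the map using only the data $a_{x_i},b_{x_i},c_{x_i'}$ and a fixed $b^*$ will separate these, because the five preimages already share identical $(a_x,c_{x'})$-values. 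More broadly, passing through Cauchy--Schwarz trades the original inequality for the second-moment bound $(\star)$, which is at least as hard and has no evident injection proof; the maximality of $|Ab^*C|$ cannot help once you have collapsed to a single $b^*$, since the fibers of $\phi$ mix all the $b$'s.

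The paper proceeds entirely differently, by induction on $|B|$ (it proves the linear analogue over a division ring, then specializes to $\mathbb{C}[G]$ to recover the group statement). One removes an element $b\in B$ realizing the maximum $m=\max_{u\in B}|AuC|$, sets $B'=B\setminus\{b\}$, and writes $|AB|=|AB'|+\beta$, $|BC|=|B'C|+\gamma$, $|ABC|=|AB'C|+\alpha$ for the numbers of ``new'' elements. A direct check gives $\alpha\le\beta\gamma$ and $\alpha\le m$, hence $\alpha^2\le m\beta\gamma$; combining this with the induction hypothesis $|AB'C|^2\le m\,|AB'|\,|B'C|$ and the AM--GM inequality $2\alpha|AB'C|\le 2m\sqrt{\beta\gamma\,|AB'|\,|B'C|}\le m(\gamma|AB'|+\beta|B'C|)$ yields exactly $(|AB'C|+\alpha)^2\le m(|AB'|+\beta)(|B'C|+\gamma)$. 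No double counting or Cauchy--Schwarz is needed; the whole argument is an elementary expansion of a square.
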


\begin{theorem}
\label{Th_triple}Consider a division ring $K$ containing the field
$\mathrm{k}$ in its center.\ Let $A,B$ and $C$ be finite, nonempty subsets of
$K^{\ast}$. Then%
\begin{equation}
(\dim_{\mathrm{k}}(ABC))^{2}\leq\dim_{\mathrm{k}}(AB)\dim_{\mathrm{k}}%
(BC)\max_{b\in B}\{\dim_{\mathrm{k}}(AbC)\}. \label{triple}%
\end{equation}
In particular, when $K$ is a field, we have%
\[
\dim_{\mathrm{k}}(ABC)^{2}\leq\dim_{\mathrm{k}}(AB)\dim_{\mathrm{k}}%
(BC)\dim_{\mathrm{k}}(AC).
\]

\end{theorem}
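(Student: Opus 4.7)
The plan is to adapt Ruzsa's classical proof of the group-theoretic counterpart $|ABC|^{2}\leq |AB|\,|BC|\,\max_{b}|AbC|$ (Theorem~1.9.2 in \cite{Ruz}) to the linear setting, replacing cardinalities of sets by dimensions of $\mathrm{k}$-spans and combinatorial injections by linear maps between tensor products. I would write $d=\dim_{\mathrm{k}}(ABC)$ and $M=\max_{b\in B}\dim_{\mathrm{k}}(AbC)$, and for each $b\in B$ set $V_{b}=\mathrm{k}\langle AbC\rangle$, so that $\dim_{\mathrm{k}}V_{b}\leq M$ and $\mathrm{k}\langle ABC\rangle=\sum_{b\in B}V_{b}$.

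The first step is to introduce, for each $b\in B$, the surjective $\mathrm{k}$-bilinear multiplication map
\[
\mu_{b}:\mathrm{k}\langle Ab\rangle\otimes_{\mathrm{k}}\mathrm{k}\langle bC\rangle\twoheadrightarrow V_{b},\qquad u\otimes v\mapsto ub^{-1}v,
\]
which is well defined precisely because $\mathrm{k}$ lies in the centre of $K$. Since $\mathrm{k}\langle Ab\rangle=\mathrm{k}\langle A\rangle b\subseteq\mathrm{k}\langle AB\rangle$ and $\mathrm{k}\langle bC\rangle=b\mathrm{k}\langle C\rangle\subseteq\mathrm{k}\langle BC\rangle$, the sources of the $\mu_{b}$'s all sit naturally inside the ambient tensor space $\mathrm{k}\langle AB\rangle\otimes_{\mathrm{k}}\mathrm{k}\langle BC\rangle$. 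Assembling the $\mu_{b}$'s yields a surjection $\mu:W\twoheadrightarrow\mathrm{k}\langle ABC\rangle$ where $W=\sum_{b\in B}\mathrm{k}\langle Ab\rangle\otimes_{\mathrm{k}}\mathrm{k}\langle bC\rangle$, and in particular $\dim_{\mathrm{k}}W\leq\dim_{\mathrm{k}}(AB)\dim_{\mathrm{k}}(BC)$.

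The core of the argument will be a Cauchy--Schwarz-style dimension estimate establishing $d^{2}\leq M\cdot\dim_{\mathrm{k}}W$; combined with the previous bound on $\dim_{\mathrm{k}}W$, this gives $d^{2}\leq\dim_{\mathrm{k}}(AB)\dim_{\mathrm{k}}(BC)M$, as required. To set it up, I would extract a basis $\{w_{i}=a_{i}b_{i}c_{i}\}_{i=1}^{d}$ of $\mathrm{k}\langle ABC\rangle$ from $ABC$ and partition its indices by middle factor, $I_{b}=\{i:b_{i}=b\}$; linear independence inside $V_{b}$ forces $|I_{b}|\leq\dim_{\mathrm{k}}V_{b}\leq M$ and $\sum_{b}|I_{b}|=d$. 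The natural candidates for a large linearly independent system in $W$ are the tensors $(a_{i}b_{i})\otimes(b_{i}c_{i})\in\mathrm{k}\langle Ab_{i}\rangle\otimes_{\mathrm{k}}\mathrm{k}\langle b_{i}C\rangle$, since applying $\mu$ to any relation $\sum\lambda_{i}(a_{i}b_{i})\otimes(b_{i}c_{i})=0$ returns $\sum\lambda_{i}w_{i}=0$ and hence $\lambda_{i}=0$, giving $\dim_{\mathrm{k}}W\geq d$. The main obstacle will be to upgrade this into the stronger lower bound $\dim_{\mathrm{k}}W\geq d^{2}/M$ by producing additional independent tensors from pairs of basis vectors sharing the same middle factor, thereby replacing the combinatorial double-counting argument of the group proof with a genuine linear-algebraic count inside $\mathrm{k}\langle AB\rangle\otimes_{\mathrm{k}}\mathrm{k}\langle BC\rangle$.

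The commutative case follows at once: when $K$ is a field, $\mathrm{k}\langle AbC\rangle=b\cdot\mathrm{k}\langle AC\rangle$ for every $b\in B$, so $\dim_{\mathrm{k}}(AbC)=\dim_{\mathrm{k}}(AC)$ independently of $b$ and $M=\dim_{\mathrm{k}}(AC)$, giving the cleaner inequality $\dim_{\mathrm{k}}(ABC)^{2}\leq\dim_{\mathrm{k}}(AB)\dim_{\mathrm{k}}(BC)\dim_{\mathrm{k}}(AC)$.
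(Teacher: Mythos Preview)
Your approach has a genuine gap at the core step: the proposed inequality $d^{2}\leq M\cdot\dim_{\mathrm{k}}W$ is false in general. Take $K=\mathrm{k}(s,t)$, $A=C=\{1,t\}$ and $B=\{1,s\}$. Then $\dim_{\mathrm{k}}(ABC)=6$ (basis $1,t,t^{2},s,st,st^{2}$), $\dim_{\mathrm{k}}(AB)=\dim_{\mathrm{k}}(BC)=4$, and $M=\dim_{\mathrm{k}}(AC)=3$. The two summands $\mathrm{k}\langle A\rangle\otimes\mathrm{k}\langle C\rangle$ and $\mathrm{k}\langle As\rangle\otimes\mathrm{k}\langle sC\rangle$ are each $4$-dimensional and meet only in $0$ inside $\mathrm{k}\langle AB\rangle\otimes\mathrm{k}\langle BC\rangle$, since in the first tensor factor they use the complementary basis halves $\{1,t\}$ and $\{s,st\}$. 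Hence $\dim_{\mathrm{k}}W=8$, whereas $d^{2}/M=36/3=12>8$. No amount of ``producing additional independent tensors'' can rescue the bound: the space $W$ is simply too small to carry it. There is also a secondary problem with assembling the $\mu_{b}$ into a single map: with $A=B=C=\{1,t\}$ in $\mathrm{k}(t)$ the tensor $t\otimes t$ lies in both $\mathrm{k}\langle A\cdot 1\rangle\otimes\mathrm{k}\langle 1\cdot C\rangle$ and $\mathrm{k}\langle At\rangle\otimes\mathrm{k}\langle tC\rangle$, but $\mu_{1}(t\otimes t)=t^{2}\neq t=\mu_{t}(t\otimes t)$, so $\mu$ is not well defined on $W$.

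The paper's argument is quite different and does not pass through tensor products. It proceeds by induction on $|B|$: one removes from $B$ an element $b$ realising the maximum $m=\dim_{\mathrm{k}}(AbC)$, extracts subsets $A^{\flat}\subseteq A$ and $C^{\flat}\subseteq C$ giving direct-sum complements $\mathrm{k}\langle AB\rangle=\mathrm{k}\langle AB'\rangle\oplus\bigoplus_{a\in A^{\flat}}\mathrm{k}\,ab$ and $\mathrm{k}\langle BC\rangle=\mathrm{k}\langle B'C\rangle\oplus\bigoplus_{c\in C^{\flat}}\mathrm{k}\,bc$, and shows that $\mathrm{k}\langle ABC\rangle=\mathrm{k}\langle AB'C\rangle+\sum_{(a,c)\in A^{\flat}\times C^{\flat}}\mathrm{k}\,abc$. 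Writing $\beta=|A^{\flat}|$, $\gamma=|C^{\flat}|$ and $\alpha$ for the dimension of the complement of $\mathrm{k}\langle AB'C\rangle$ in $\mathrm{k}\langle ABC\rangle$, one has $\alpha\leq\beta\gamma$ and $\alpha\leq m$; combining the inductive hypothesis with the arithmetic--geometric mean inequality then yields \eqref{triple}. Your final paragraph, deducing the commutative case from the general one via $\dim_{\mathrm{k}}(AbC)=\dim_{\mathrm{k}}(AC)$, is correct and matches the paper.
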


\begin{proof}
We proceed by induction on $\left\vert B\right\vert $. When $B=\{b\}$, we
obtain
\[
\dim_{\mathrm{k}}(AbC)^{2}\leq\dim_{\mathrm{k}}(Ab)\dim_{\mathrm{k}}%
(bC)\dim_{\mathrm{k}}(AbC)
\]
by observing that $\dim_{\mathrm{k}}(AbC)\leq\dim_{\mathrm{k}}(Ab)\dim
_{\mathrm{k}}(C)$ and $\dim_{\mathrm{k}}(C)=\dim_{\mathrm{k}}(bC)$.\ Now
assume $\left\vert B\right\vert >1$ and fix $b\in B$ such that $\max_{u\in
B}\{\dim_{\mathrm{k}}(AuC)\}=\dim_{\mathrm{k}}(AbC)$.\ Set $m=\dim
_{\mathrm{k}}(AbC)$.\ Write $B^{\prime}=B\setminus\{b\}$.\ Set $A=\{a_{1}%
,\ldots,a_{r}\}$ and $C=\{c_{1},\ldots,c_{s}\}$. We have $\mathrm{k}\langle
AB\rangle=\mathrm{k}\langle AB^{\prime}\rangle+\sum_{a\in A}\mathrm{k}\langle
ab\rangle$. Let $S_{AB^{\prime}}$ be a basis of $\mathrm{k}\langle AB^{\prime
}\rangle$. Since $S_{A^{\prime}B}\cup Ab$ generates $\mathrm{k}\langle
AB\rangle$, there exists a subset $A^{\flat}$ of $A$ such that
\[
\mathrm{k}\langle AB\rangle=\mathrm{k}\langle AB^{\prime}\rangle
\oplus\bigoplus_{a\in A^{\flat}}\mathrm{k}\langle ab\rangle.
\]
Similarly, there exists a subset $C^{\flat}$ of $C$ such that
\[
\mathrm{k}\langle BC)=\mathrm{k}\langle B^{\prime}C\rangle\oplus
\bigoplus_{c\in C^{\flat}}\mathrm{k}\langle bc\rangle.
\]
We get%
\begin{multline*}
\mathrm{k}\langle ABC\rangle=\mathrm{k}\langle AB^{\prime}C\rangle+\sum_{a\in
A^{\flat}}\mathrm{k}\langle abC\rangle=\mathrm{k}\langle AB^{\prime}%
C\rangle+\sum_{a\in A^{\flat}}\mathrm{k}\langle aBC\rangle=\\
\mathrm{k}\langle AB^{\prime}C\rangle+\sum_{a\in A^{\flat}}\mathrm{k}\langle
aB^{\prime}C\rangle+\sum_{a\in A^{\flat}}\sum_{c\in C^{\flat}}\mathrm{k}%
\langle abc\rangle.
\end{multline*}
But $\sum_{a\in A^{\flat}}\mathrm{k}\langle aB^{\prime}C\rangle\subseteq
\mathrm{k}\langle AB^{\prime}C\rangle$, and thus%
\[
\mathrm{k}\langle ABC\rangle=\mathrm{k}\langle AB^{\prime}C\rangle+\sum_{a\in
A^{\flat}}\sum_{c\in C^{\flat}}\mathrm{k}\langle abc\rangle.
\]
Let $S_{AB^{\prime}C}$ be a basis of $\mathrm{k}\langle AB^{\prime}C\rangle.$
By the previous decomposition, there exists $X\subseteq A^{\flat}\times
B^{\flat}$ such that%
\[
\mathrm{k}\langle ABC\rangle=\mathrm{k}\langle AB^{\prime}C\rangle
\oplus\bigoplus_{(a,c)\in X}\mathrm{k}\langle abc\rangle.
\]
Set $\alpha=\left\vert X\right\vert $, $\beta=\left\vert A^{\flat}\right\vert
$ and $\gamma=\left\vert C^{\flat}\right\vert $.\ We have to prove
(\ref{triple}), that is,
\begin{equation}
(\dim_{\mathrm{k}}(AB^{\prime}C)+\alpha)^{2}\leq(\dim_{\mathrm{k}}(AB^{\prime
})+\beta)(\dim_{\mathrm{k}}(B^{\prime}C)+\gamma)m. \label{tripleref}%
\end{equation}
By the induction hypothesis, we have
\begin{equation}
\dim_{\mathrm{k}}(AB^{\prime}C)^{2}\leq\dim_{\mathrm{k}}(AB^{\prime}%
)\dim_{\mathrm{k}}(B^{\prime}C)m \label{tripleinduc}%
\end{equation}
because $\max_{u\in B^{\prime}}\{\dim_{\mathrm{k}}(AuC)\}\leq\max_{u\in
B}\{\dim_{\mathrm{k}}(AuC)\}=m$. We have $\bigoplus_{(a,c)\in X}%
\mathrm{k}\langle abc\rangle\subseteq\mathrm{k}\langle AbC\rangle$. So
$\alpha\leq m$. Since $X\subseteq A^{\flat}\times B^{\flat}$, we have also
$\alpha\leq\beta\gamma$. We get $\alpha^{2}\leq m\beta\gamma.$ By multiplying
in (\ref{tripleinduc}), this gives
\[
\alpha^{2}\dim_{\mathrm{k}}(AB^{\prime}C)^{2}\leq\beta\gamma\dim_{\mathrm{k}%
}(AB^{\prime})\dim_{\mathrm{k}}(B^{\prime}C)m^{2}.
\]
Therefore%
\[
\alpha\dim_{\mathrm{k}}(AB^{\prime}C)\leq m\sqrt{\beta\gamma\dim_{\mathrm{k}%
}(AB^{\prime})\dim_{\mathrm{k}}(B^{\prime}C)}\leq m\frac{\gamma\dim
_{\mathrm{k}}(AB^{\prime})+\beta\dim_{\mathrm{k}}(B^{\prime}C)}{2}%
\]
by applying the geometric-arithmetic means inequality.\ So
\[
2\alpha\dim_{\mathrm{k}}(AB^{\prime}C)\leq m(\gamma\dim_{\mathrm{k}%
}(AB^{\prime})+\beta\dim_{\mathrm{k}}(B^{\prime}C)).
\]
Combining this last equality with $\alpha^{2}\leq m\beta\gamma$ and
(\ref{tripleinduc}), we finally get%
\[
\dim_{\mathrm{k}}(AB^{\prime}C)^{2}+2\alpha\dim_{\mathrm{k}}(AB^{\prime
}C)+\alpha^{2}\leq m(\dim_{\mathrm{k}}(AB^{\prime})\dim_{\mathrm{k}}%
(B^{\prime}C)+\gamma\dim_{\mathrm{k}}(AB^{\prime})+\beta\dim_{\mathrm{k}%
}(B^{\prime}C)+\beta\gamma)
\]
as desired.
\end{proof}

\bigskip

By using Theorems \ref{Th_plunn} and \ref{Th_triple}, we can obtain a bound
for $\dim_{\mathrm{k}}(A^{3})$ knowing $\dim_{\mathrm{k}}(A^{2})$ and
$\dim_{\mathrm{k}}(A)$.

\begin{corollary}
Consider a field extension $K$ of $\mathrm{k}$ and $A$ a nonempty, finite
subset of $K^{\ast}$.$\;$Assume $\dim_{\mathrm{k}}(A)=m$ and $\dim
_{\mathrm{k}}(A^{2})=n$. Then,
\[
\dim_{\mathrm{k}}(A^{3})\leq\min(n^{3/2},\frac{n^{3}}{m^{2}}).
\]

\end{corollary}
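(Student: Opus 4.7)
The plan is to derive the two bounds separately and then take the minimum. Each follows directly from one of the two theorems cited.

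First, for the bound $\dim_{\mathrm{k}}(A^{3}) \leq n^{3/2}$, I would apply Theorem \ref{Th_triple} to the triple $(A,A,A)$ in the field $K$. The commutative version of that theorem (the second displayed inequality in its statement) gives
\[
\dim_{\mathrm{k}}(A^{3})^{2} \leq \dim_{\mathrm{k}}(A^{2}) \dim_{\mathrm{k}}(A^{2}) \dim_{\mathrm{k}}(A^{2}) = n^{3},
\]
hence $\dim_{\mathrm{k}}(A^{3}) \leq n^{3/2}$ upon taking square roots.

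Second, for the bound $\dim_{\mathrm{k}}(A^{3}) \leq n^{3}/m^{2}$, I would invoke the Pl\"unnecke type estimate (Theorem \ref{Th_plunn}). Setting $\alpha = n/m$, the hypothesis $\dim_{\mathrm{k}}(A^{2}) \leq \alpha \dim_{\mathrm{k}}(A)$ is satisfied by construction, so the ``In particular'' clause yields $\dim_{\mathrm{k}}(A^{n}) \leq \alpha^{n} \dim_{\mathrm{k}}(A)$ for every positive integer $n$. Specializing to $n=3$ gives
\[
\dim_{\mathrm{k}}(A^{3}) \leq \left(\frac{n}{m}\right)^{3} m = \frac{n^{3}}{m^{2}}.
\]

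Combining the two inequalities gives $\dim_{\mathrm{k}}(A^{3}) \leq \min(n^{3/2}, n^{3}/m^{2})$, as desired. There is really no obstacle here: both auxiliary results have already been proved, and the corollary is just a matter of specializing them with $A=B=C$ (for Theorem \ref{Th_triple}) and $B=A$ (for Theorem \ref{Th_plunn}). The only minor point worth noting is that the first bound uses commutativity of $K$ through the second assertion of Theorem \ref{Th_triple}, which is why the hypothesis ``$K$ is a field extension of $\mathrm{k}$'' appears in the statement.
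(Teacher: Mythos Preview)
Your proof is correct and follows precisely the approach the paper indicates: the corollary is stated without proof, preceded only by the sentence ``By using Theorems \ref{Th_plunn} and \ref{Th_triple}, we can obtain a bound for $\dim_{\mathrm{k}}(A^{3})$ knowing $\dim_{\mathrm{k}}(A^{2})$ and $\dim_{\mathrm{k}}(A)$,'' and your specializations $A=B=C$ in Theorem \ref{Th_triple} and $\alpha=n/m$ in Theorem \ref{Th_plunn} are exactly what is intended. One cosmetic point: you reuse the letter $n$ for the exponent in the Pl\"unnecke estimate while $n$ already denotes $\dim_{\mathrm{k}}(A^{2})$ in the corollary; it would be cleaner to write the exponent as $k$ (or simply apply the theorem directly with exponent $3$).
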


\section{Pl\"{u}nnecke-type estimates in associative algebras}

\label{Sec_alge}

The arguments we have used in the proofs of Section \ref{Sec_Plum_K} to obtain
Pl\"{u}nnecke-type estimates in division rings in fact remain valid in the
more general context of associative unital algebras with suitable hypotheses
on the subspaces considered. More precisely, let $\mathcal{A}$ be a unital
associative algebra over the field $\mathrm{k}$.\ Write $U(\mathcal{A)}$ for
the group of invertible elements in $\mathcal{A}$. As classical examples, we
can consider any matrix algebra containing the identity matrix or the group algebras.

Given a subset $A$ of $\mathcal{A}$ and $x\in\mathcal{A}$, $\dim_{\mathrm{k}%
}(xA)$ and $\dim_{\mathrm{k}}(Ax)$ do not necessarily coincide with
$\dim_{\mathrm{k}}(A)$. This is nevertheless true when $x\in U(\mathcal{A})$.
Let $A,B$ and $C$ be nonempty, finite subsets of $\mathcal{A}$ such that

\begin{itemize}
\item $B\cap U(\mathcal{A})\neq\emptyset,$

\item $C\subseteq U(\mathcal{A)}$.
\end{itemize}

Then for any $\mathrm{k}$-subspace $V\neq\{0\}$ of $\mathrm{k}\langle
A\rangle$,%
\[
\dim_{\mathrm{k}}(VB)\geq\dim_{\mathrm{k}}(V),\quad r(V)=\frac{\dim
_{\mathrm{k}}(VB)}{\dim_{\mathrm{k}}(V)}>0\text{ and }\rho:=\min
_{V\subseteq\mathrm{k}\langle A\rangle,V\neq\{0\}}r(V)>0.
\]
There thus also exists a nonempty set $\mathrm{k}\langle X\rangle\subseteq A$
such that $r(\mathrm{k}\langle X\rangle)=\rho$. One then easily verifies that
the arguments used in the proof of Proposition \ref{prop-CXB} remain valid for
the algebra $\mathcal{A}$ with the previous assumptions on $A,B$ and $C$. We
then obtain the following statements which generalize Corollary
\ref{Cor_petri}, Theorem \ref{Th_plunn} and Theorem \ref{Th_triple}.

\begin{corollary}
\label{CorAlg}Consider $A$ and $B$ two finite subsets of $\mathcal{A}$ with
$B\cap U(\mathcal{A})\neq\emptyset$.\ Assume $\alpha$ is a positive real such
that $\dim_{\mathrm{k}}(AB)\leq\alpha\dim_{\mathrm{k}}(A)$. Then there exists
a subset $X\subseteq\mathrm{k}\langle A\rangle$ such that, for any finite
subset $C$ of $U(\mathcal{A})$, $\dim_{\mathrm{k}}(CXB)\leq\alpha
\dim_{\mathrm{k}}(CX)$.
\end{corollary}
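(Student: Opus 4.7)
The plan is to mirror the proof of Corollary~\ref{Cor_petri}, but in the algebra setting, relying on the generalisation of Proposition~\ref{prop-CXB} that the surrounding text asserts (and which I would first verify). Concretely, I would define
\[
r(V)=\frac{\dim_{\mathrm{k}}(VB)}{\dim_{\mathrm{k}}(V)}
\]
for each nonzero finite-dimensional $\mathrm{k}$-subspace $V\subseteq \mathrm{k}\langle A\rangle$. The first thing to check is that $r(V)>0$: picking any $b_{0}\in B\cap U(\mathcal{A})$, we have $Vb_{0}\subseteq VB$ and $\dim_{\mathrm{k}}(Vb_{0})=\dim_{\mathrm{k}}(V)$ because $b_{0}$ is invertible, so $\dim_{\mathrm{k}}(VB)\geq\dim_{\mathrm{k}}(V)>0$. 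Since $r$ takes values in the rationals with bounded denominators, its infimum $\rho$ over the nonzero subspaces of $\mathrm{k}\langle A\rangle$ is attained on some nonempty $X\subseteq \mathrm{k}\langle A\rangle$, and in particular
\[
\rho \;\leq\; r(\mathrm{k}\langle A\rangle) \;=\; \frac{\dim_{\mathrm{k}}(AB)}{\dim_{\mathrm{k}}(A)} \;\leq\; \alpha.
\]

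Next I would invoke the algebra version of Proposition~\ref{prop-CXB}: for every finite $C\subseteq U(\mathcal{A})$,
\[
\dim_{\mathrm{k}}(CXB)\;\leq\;\rho\,\dim_{\mathrm{k}}(CX).
\]
Combining with $\rho\leq\alpha$ immediately yields $\dim_{\mathrm{k}}(CXB)\leq\alpha\,\dim_{\mathrm{k}}(CX)$, which is the desired inequality.

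The only real work is verifying that Proposition~\ref{prop-CXB}'s argument transfers to the algebra $\mathcal{A}$ under the two hypotheses $B\cap U(\mathcal{A})\neq\emptyset$ and $C\subseteq U(\mathcal{A})$. The inductive construction of the sets $X_{j}\subseteq X$ with $\sum_{i=1}^{j}c_{i}\mathrm{k}\langle X\rangle = \bigoplus_{i=1}^{j}c_{i}\mathrm{k}\langle X_{i}\rangle$ is purely linear algebra and needs no multiplicative structure beyond the ability to write a spanning set; the identity $\dim_{\mathrm{k}}(c_{i}X_{i})=\dim_{\mathrm{k}}(X_{i})$ used throughout requires $c_{i}\in U(\mathcal{A})$ and so is exactly where the hypothesis $C\subseteq U(\mathcal{A})$ is needed; and the intersection $V_{r}=c_{r}^{-1}\!\bigl(\sum_{a<r}c_{a}\mathrm{k}\langle X\rangle\bigr)\cap \mathrm{k}\langle X\rangle$ makes sense because $c_{r}^{-1}$ exists. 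The hypothesis $B\cap U(\mathcal{A})\neq\emptyset$ is only used at the outset to ensure $\rho>0$, so that minimisation is meaningful.

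The main potential obstacle is making sure that no step of the original division-ring proof tacitly used right-multiplication by elements of $B$ being dimension-preserving (it doesn't: only right-multiplication by the $c_{i}\in C$ does, at the level of the identity $\dim_{\mathrm{k}}(c_{i}\mathrm{k}\langle X_{i}\rangle)=\dim_{\mathrm{k}}(X_{i})$). Once this is confirmed, the corollary follows directly from the proposition applied to the pair $(X,B)$ and the upper bound $\rho\leq\alpha$.
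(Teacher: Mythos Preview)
Your proposal is correct and follows essentially the same approach as the paper: the paper does not give a separate proof of Corollary~\ref{CorAlg} but simply asserts that the arguments of Proposition~\ref{prop-CXB} and Corollary~\ref{Cor_petri} carry over to the associative algebra setting under the stated hypotheses, which is exactly what you spell out. One small terminological slip: the identity $\dim_{\mathrm{k}}(c_{i}\mathrm{k}\langle X_{i}\rangle)=\dim_{\mathrm{k}}(X_{i})$ uses \emph{left}-multiplication by $c_{i}$, not right-multiplication, but this does not affect the argument.
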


\begin{theorem}
\label{ThAlg1}Assume $\mathcal{A}$ is commutative.\ Let $A$ be a nonempty
finite subset of $\mathcal{A}$ and $B$ be a nonempty, finite subset of
$U(\mathcal{A})$.\ Assume that $\dim_{\mathrm{k}}(AB)\leq\alpha\dim
_{\mathrm{k}}(A)$ where $\alpha$ is a positive real. Then there exists a
subset $X\subseteq\mathrm{k}\langle A\rangle$ such that, for any positive
integer $n$,
\[
\dim_{\mathrm{k}}(XB^{n})\leq\alpha^{n}\dim_{\mathrm{k}}(X)\text{.}%
\]
In particular, if $A\subseteq U(\mathcal{A})$, $\dim_{\mathrm{k}}(A^{2}%
)\leq\alpha\dim_{\mathrm{k}}(A)$ implies that $\dim(A^{n})\leq\alpha^{n}%
\dim(A)$.
\end{theorem}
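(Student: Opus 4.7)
The strategy is to reproduce the argument of Theorem \ref{Th_plunn} almost verbatim, using the unital-algebra analog of Proposition \ref{prop-CXB} whose validity is asserted in the paragraph preceding Corollary \ref{CorAlg}. The key observation making this transfer work in the algebra setting is that the hypothesis $B\subseteq U(\mathcal{A})$ ensures $B^{m}\subseteq U(\mathcal{A})$ for every $m\geq 1$, so the invertibility conditions required to invoke the algebra analog (namely $B\cap U(\mathcal{A})\neq\emptyset$ and the auxiliary multiplier set being contained in $U(\mathcal{A})$) are preserved throughout the induction.

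Concretely, I first choose a nonempty $X\subseteq\mathrm{k}\langle A\rangle$ realizing
$$\rho=\min_{V\subseteq\mathrm{k}\langle A\rangle,\,V\neq\{0\}}\frac{\dim_{\mathrm{k}}(VB)}{\dim_{\mathrm{k}}(V)},$$
which exists as a positive real because left multiplication by any element of $B\subseteq U(\mathcal{A})$ preserves $\mathrm{k}$-dimension. From $\dim_{\mathrm{k}}(AB)\leq\alpha\dim_{\mathrm{k}}(A)$ one reads off $\rho\leq\alpha$, which is precisely the case $n=1$ of the theorem.

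The inductive step $n-1\Rightarrow n$ for $n\geq 2$ is handled by applying the algebra version of Proposition \ref{prop-CXB} with $C=B^{n-1}\subseteq U(\mathcal{A})$, giving
$$\dim_{\mathrm{k}}(B^{n-1}XB)\leq\frac{\dim_{\mathrm{k}}(B^{n-1}X)\,\dim_{\mathrm{k}}(XB)}{\dim_{\mathrm{k}}(X)}.$$
Because $\mathcal{A}$ is commutative and $X\subseteq\mathrm{k}\langle A\rangle$, we have $\mathrm{k}\langle XB^n\rangle=\mathrm{k}\langle B^{n-1}XB\rangle$ and $\mathrm{k}\langle B^{n-1}X\rangle=\mathrm{k}\langle XB^{n-1}\rangle$. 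Combining this rewriting with the inductive bound $\dim_{\mathrm{k}}(XB^{n-1})\leq\alpha^{n-1}\dim_{\mathrm{k}}(X)$ and the case $n=1$ yields $\dim_{\mathrm{k}}(XB^n)\leq\alpha^n\dim_{\mathrm{k}}(X)$, closing the induction.

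The main obstacle is not computational but structural: one must check that the invertibility hypothesis is maintained at every inductive step, and this is exactly where it matters that we assume $B\subseteq U(\mathcal{A})$ (not merely $B\cap U(\mathcal{A})\neq\emptyset$), so that the iterated powers $B^{n-1}$ remain in $U(\mathcal{A})$ and the algebra analog of Proposition \ref{prop-CXB} can be invoked unrestrictedly. The ``in particular'' statement, when $A\subseteq U(\mathcal{A})$ and $B=A$, is then obtained exactly as the corresponding remark in Theorem \ref{Th_plunn}.
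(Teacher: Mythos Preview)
Your proposal is correct and follows exactly the approach the paper intends: the paper does not give a separate proof of Theorem~\ref{ThAlg1} but simply asserts that the proof of Theorem~\ref{Th_plunn} goes through verbatim once one checks that the hypotheses on $B$ and $C$ in the algebra version of Proposition~\ref{prop-CXB} are met, and you have spelled out precisely this verification (notably that $B\subseteq U(\mathcal{A})$ forces $B^{n-1}\subseteq U(\mathcal{A})$, so the auxiliary set $C=B^{n-1}$ is admissible at every step). The only cosmetic slip is that the dimension-preserving maps you invoke are \emph{right} multiplications $v\mapsto vb$ rather than left ones, but in the commutative algebra $\mathcal{A}$ this distinction is immaterial.
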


\begin{theorem}
\label{ThAlg}Let $A,B$ and $C$ be finite nonempty subsets of $\mathcal{A}$
such that $B\subseteq U(\mathcal{A})$. Then%
\[
\dim_{\mathrm{k}}(ABC)^{2}\leq\dim_{\mathrm{k}}(AB)\dim_{\mathrm{k}}%
(BC)\max_{b\in B}\{\dim_{\mathrm{k}}(AbC)\}.
\]
In particular, when $\mathcal{A}$ is commutative, we have%
\[
\dim_{\mathrm{k}}(ABC)^{2}\leq\dim_{\mathrm{k}}(AB)\dim_{\mathrm{k}}%
(BC)\dim_{\mathrm{k}}(AC).
\]

\end{theorem}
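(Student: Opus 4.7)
The plan is to adapt the proof of Theorem \ref{Th_triple} to the associative unital algebra $\mathcal{A}$, using the hypothesis $B\subseteq U(\mathcal{A})$ in place of the invertibility of all nonzero elements of a division ring. As in the original, the proof is by induction on $|B|$.

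For the base case $B=\{b\}$ with $b\in U(\mathcal{A})$, I combine the general bound $\dim_{\mathrm{k}}(AbC)\leq \dim_{\mathrm{k}}(Ab)\dim_{\mathrm{k}}(C)$ with the identity $\dim_{\mathrm{k}}(C)=\dim_{\mathrm{k}}(bC)$, the latter holding because left multiplication by an invertible element is a $\mathrm{k}$-linear automorphism of $\mathcal{A}$. Multiplying by $m=\dim_{\mathrm{k}}(AbC)$ yields the claim. For the inductive step, I fix $b\in B$ with $\dim_{\mathrm{k}}(AbC)=m:=\max_{u\in B}\dim_{\mathrm{k}}(AuC)$, set $B'=B\setminus\{b\}$, and extract subsets $A^{\flat}\subseteq A$ and $C^{\flat}\subseteq C$ with
\[
\mathrm{k}\langle AB\rangle=\mathrm{k}\langle AB'\rangle\oplus\bigoplus_{a\in A^{\flat}}\mathrm{k}\langle ab\rangle,\qquad \mathrm{k}\langle BC\rangle=\mathrm{k}\langle B'C\rangle\oplus\bigoplus_{c\in C^{\flat}}\mathrm{k}\langle bc\rangle.
\]
These decompositions are routine basis-extension statements and require no invertibility outside $B$. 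Combining them as in the proof of Theorem \ref{Th_triple} produces $X\subseteq A^{\flat}\times C^{\flat}$ with
\[
\mathrm{k}\langle ABC\rangle=\mathrm{k}\langle AB'C\rangle\oplus\bigoplus_{(a,c)\in X}\mathrm{k}\langle abc\rangle.
\]

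Setting $\alpha=|X|$, $\beta=|A^{\flat}|$, $\gamma=|C^{\flat}|$, the target inequality becomes
\[
(\dim_{\mathrm{k}}(AB'C)+\alpha)^{2}\leq (\dim_{\mathrm{k}}(AB')+\beta)(\dim_{\mathrm{k}}(B'C)+\gamma)\,m.
\]
The induction hypothesis yields $\dim_{\mathrm{k}}(AB'C)^{2}\leq \dim_{\mathrm{k}}(AB')\dim_{\mathrm{k}}(B'C)\,m$; the inclusion $\bigoplus_{(a,c)\in X}\mathrm{k}\langle abc\rangle\subseteq\mathrm{k}\langle AbC\rangle$ forces $\alpha\leq m$; and $X\subseteq A^{\flat}\times C^{\flat}$ forces $\alpha\leq\beta\gamma$. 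A single arithmetic-geometric mean step on the cross term then closes the inequality exactly as in Theorem \ref{Th_triple}. The ``In particular'' commutative statement is then immediate, since commutativity combined with $b\in U(\mathcal{A})$ gives $\dim_{\mathrm{k}}(AbC)=\dim_{\mathrm{k}}(bAC)=\dim_{\mathrm{k}}(AC)$, so the maximum over $B$ collapses to $\dim_{\mathrm{k}}(AC)$.

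I do not expect a genuine obstacle here: every ingredient of the original proof is either a linear-algebra fact valid verbatim in any associative algebra, or else concerns multiplication by an element of $B\subseteq U(\mathcal{A})$, which is precisely where the hypothesis is deployed. The one point requiring care is verifying that the two direct-sum decompositions above can be constructed without appealing to any division-ring property; this holds since they are obtained purely by extending a given basis to a larger one.
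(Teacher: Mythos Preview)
Your proposal is correct and is exactly the approach the paper takes: the paper does not give a separate proof of Theorem \ref{ThAlg}, but simply asserts that the arguments of Theorem \ref{Th_triple} carry over verbatim once $B\subseteq U(\mathcal{A})$, and you have carefully checked that every step (the base case, the two basis-extension decompositions, the bound $\alpha\le m$, and the AM--GM closure) indeed only uses invertibility of elements of $B$. Your observation that the direct-sum decompositions are pure basis-extension facts needing nothing beyond linear algebra is the right diagnosis.
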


\noindent\textbf{Remarks:}

\begin{enumerate}
\item When $\mathcal{A}$ is not commutative, the conclusion of Theorem
\ref{ThAlg1} remains valid if we assume that $ab=ba$ for any $a\in A$ and
$b\in B$.

\item The proof of the Kneser type theorems we have obtained in Section
\ref{Sec_Kneser} (and also that of the linear Olson theorem) cannot be so
easily adapted to the case of associative algebras. Indeed, given subsets $A$
and $B$ of $U(\mathcal{A})$, $\mathrm{k}\langle A\rangle\cap\mathrm{k}\langle
B\rangle$ may have an empty intersection with $U(\mathcal{A})$.\ In
particular, arguments based on the use of linear versions of the Dyson or
Kemperman transform fail.

\item With the notation of \S \ \ref{subsec-linknonabelina}, consider $G$ a
group and $X,Y,Z$ finite subsets in $G$. Observe that $A_{X},A_{Y}$ and
$A_{Z}$ are subsets of $U(\mathbb{C}[G]).$ By Theorem \ref{ThAlg}, we recover
Ruzsa's inequality%
\[
\left\vert ABC\right\vert ^{2}\leq\left\vert XY\right\vert \left\vert
YZ\right\vert \max_{y\in Y}\{\left\vert XyZ\right\vert \}.
\]

\item Theorem \ref{ThAlg1} also yields Theorem \ref{Th_plum} also by
considering subsets of $\mathbb{C}[G]$.
\end{enumerate}

\section{Kneser type theorems for division rings}

\label{Sec_Kneser}In this section, $K$ is a division ring and $\mathrm{k}$ a
field contained in the center of $K$.

\subsection{Assuming $A$ is commutative}

Consider a finite nonempty subset $A$ of $K^{\ast}$. We say that $A$ is
commutative when $aa^{\prime}=a^{\prime}a$ for any $a,a^{\prime}\in A$. This
then implies that the elements of $\mathrm{k}\langle A\rangle$ are pairwise
commutative.\ Moreover the division ring $\mathbb{D}(A)$ generated by $A$ is a
field. Typical examples of commutative sets are geometric progressions
$A=\{a^{r},a^{r+1},\ldots,a^{r+s}\}$ with $r$ and $s$ integers. The following
theorem is the linearization of a theorem by Diderrich \cite{Die} extending
Kneser's theorem for arbitrary groups when only the subset $A$ is assumed
commutative. It was shown by Hamidoune in \cite{Hami4} that Diderrich's result
can also be derived from the original Kneser theorem in abelian group.\ Here
we will prove our theorem without using Theorem \ref{Th_HL}. Also we will
assume that $\mathrm{k}$ is infinite.\ When $K$ is finite-dimensional over
$\mathrm{k}$ and $\mathrm{k}$ is finite, $K$ is a field, so we can apply
Theorem \ref{Th_HL}.

\begin{theorem}
\label{Th_KL_Acom}Assume $\mathrm{k}$ is infinite and every algebraic element
of $K$ is separable over $\mathrm{k}$.

\begin{enumerate}
\item Let $A$ and $B$ be two finite nonempty subsets of $K^{\ast}$ such that
$A$ is commutative. Then either $\dim_{\mathrm{k}}(AB)\geq\dim_{\mathrm{k}%
}(A)+\dim_{\mathrm{k}}(B)-1$ or $\mathrm{k}\langle AB\rangle$ is left periodic.

\item Let $A_{1},\ldots,A_{n}$ be a collection of finite nonempty subsets of
$K^{\ast}$ such that $A_{1},\ldots,A_{n-1}$ are commutative. Then either
$\dim_{\mathrm{k}}(A_{1}\cdots A_{n})\geq\sum_{i=1}^{n}\dim_{\mathrm{k}}%
(A_{i})-(n-1)$ or $\mathrm{k}\langle A_{1}\cdots A_{n}\rangle$ is periodic.
\end{enumerate}
\end{theorem}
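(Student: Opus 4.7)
The plan is to reduce both parts to the linear Kneser theorem (Theorem \ref{Th_HL}) applied inside the division subring $L:=\mathbb{D}(A)$; since $A$ is commutative, $L$ is actually a field containing $\mathrm{k}$, and the separability hypothesis transfers to $L/\mathrm{k}$, so Theorem \ref{Th_HL} is available inside $L$.

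For Part 1 I would view $K$ as a left $L$-vector space. The $L$-cosets $Ls$ partition $K^{\ast}$, so $B$ decomposes as $B=\bigsqcup_{i=1}^{t}B^{(i)}s_{i}$ with $s_{1},\dots,s_{t}\in K^{\ast}$ representing the distinct cosets met by $B$ and $B^{(i)}\subseteq L^{\ast}$. Left multiplication by $A\subseteq L$ preserves each coset, whence
\[
\mathrm{k}\langle B\rangle=\bigoplus_{i=1}^{t}\mathrm{k}\langle B^{(i)}\rangle\,s_{i},\qquad
\mathrm{k}\langle AB\rangle=\bigoplus_{i=1}^{t}\mathrm{k}\langle AB^{(i)}\rangle\,s_{i},
\]
with every product $AB^{(i)}$ lying inside the field $L$. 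In particular $\dim_{\mathrm{k}}(B)=\sum_{i}\dim_{\mathrm{k}}(B^{(i)})$ and $\dim_{\mathrm{k}}(AB)=\sum_{i}\dim_{\mathrm{k}}(AB^{(i)})$. Applying Theorem \ref{Th_HL} to each pair $(A,B^{(i)})$ inside $L$ yields the Kneser estimate
\[
\dim_{\mathrm{k}}(AB^{(i)})\geq\dim_{\mathrm{k}}(A)+\dim_{\mathrm{k}}(B^{(i)})-\dim_{\mathrm{k}}(H_{i}),
\]
where $H_{i}\subseteq L$ is the intermediate stabilizing field. Summing over $i$ gives $\dim_{\mathrm{k}}(AB)\geq\dim_{\mathrm{k}}(A)+\dim_{\mathrm{k}}(B)-1$ as soon as every $H_{i}=\mathrm{k}$.

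The main obstacle is reconciling several distinct nontrivial $H_{i}$ into a single stabilizer of $\mathrm{k}\langle AB\rangle$ in $K$. This is where the hypothesis that $\mathrm{k}$ is infinite should enter: by replacing each coset representative $s_{i}$ by a generic $\mathrm{k}$-linear combination of the basis elements of $\mathrm{k}\langle B\rangle\cap Ls_{i}$, and using that separability bounds the number of intermediate fields of $L/\mathrm{k}$, one should be able to force all the $H_{i}$ to coincide with a common $H\supsetneq\mathrm{k}$. Such an $H$ then stabilizes each summand $\mathrm{k}\langle AB^{(i)}\rangle s_{i}$ on the left, hence stabilizes $\mathrm{k}\langle AB\rangle$, producing the left-periodicity alternative.

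For Part 2 I would induct on $n$. Applying Part 1 to the pair $(A_{1},A_{2}\cdots A_{n})$ yields either $\dim_{\mathrm{k}}(A_{1}\cdots A_{n})\geq\dim_{\mathrm{k}}(A_{1})+\dim_{\mathrm{k}}(A_{2}\cdots A_{n})-1$ or left-periodicity of $\mathrm{k}\langle A_{1}\cdots A_{n}\rangle$, which concludes the proof in the latter case. In the former, combine with the inductive hypothesis for $(A_{2},\dots,A_{n})$ via Lemma \ref{lemma_util} to obtain the desired telescoping dimension bound. The only subtlety is when the inductive hypothesis returns periodicity of the suffix $\mathrm{k}\langle A_{2}\cdots A_{n}\rangle$: right-periodicity of the suffix propagates through left multiplication by $A_{1}$ (by the observation at the end of Section 2.1) to give right-periodicity of the full product, whereas a left-periodic suffix does not propagate directly and should be handled by running the induction symmetrically with a right-sided analogue of Part 1 (obtained by passing to the opposite division ring).
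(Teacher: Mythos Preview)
Your reduction to Theorem~\ref{Th_HL} inside $L=\mathbb{D}(A)$ is a genuinely different route from the paper's, which deliberately avoids Theorem~\ref{Th_HL} and instead builds a linear Dyson transform (Lemma~\ref{Lem_diff}): for each nonzero $a\in\mathrm{k}\langle A\rangle$ it produces a subfield $H_{a}\subseteq\mathbb{D}(A)$ and a subspace $V_{a}\subseteq\mathrm{k}\langle AB\rangle$ containing $\mathrm{k}\langle aB\rangle$, with $\dim V_{a}+\dim H_{a}\ge\dim A+\dim B$. The key point is that the parameter $a$ ranges over the \emph{infinite} set $\mathrm{k}\langle A\rangle\setminus\{0\}$; since all the $H_{a}$ lie in the algebraic closure of $\mathrm{k}$ in $\mathbb{D}(A)$, which is finite and separable, only finitely many intermediate fields occur, and the Vandermonde trick then finds $n=\dim A$ values of $a$ with a common $H$ that span $\mathrm{k}\langle A\rangle$. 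That common $H$ stabilizes $\mathrm{k}\langle AB\rangle=\sum V_{a}$.

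Your decomposition along right $L$-cosets is correct and the summation argument works when every $H_{i}=\mathrm{k}$. The gap is precisely where you flag it: you cannot force the $H_{i}$ to coincide by varying the coset representatives. If $s_{i}'=\ell_{i}s_{i}$ with $\ell_{i}\in L^{\ast}$, then $B^{(i)}$ becomes $B^{(i)}\ell_{i}^{-1}$ and $\mathrm{k}\langle AB^{(i)}\rangle$ becomes $\mathrm{k}\langle AB^{(i)}\rangle\ell_{i}^{-1}$, whose stabilizer in $L$ is unchanged. So the fields $H_{i}$ are intrinsic to the cosets meeting $B$; there are only finitely many of them and no free parameter over $\mathrm{k}$ to pigeonhole against. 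And one nontrivial $H_{i}$ alone does \emph{not} give left periodicity of $\mathrm{k}\langle AB\rangle$, since $H_{i}$ need not stabilize the other summands $\mathrm{k}\langle AB^{(j)}\rangle s_{j}$. This is exactly why the paper shifts the free parameter from the $B$-side to the $A$-side: varying $a\in\mathrm{k}\langle A\rangle$ gives infinitely many subfields $H_{a}$ to collide, and the resulting common $H$ stabilizes a family of $V_{a}$'s that together exhaust $\mathrm{k}\langle AB\rangle$.

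For Part~2, the paper simply observes that the equivalence argument in part~(II) of the proof of Theorem~\ref{Th_Lin_Kne_n} (via Lemma~\ref{lemma_util} and propagation of left periodicity along right multiplication) uses no commutativity of $K$, so Part~1 (the $n=2$ case) already yields Part~2. Your inductive scheme is close in spirit, but note that the pairing you need to make this work is $(A,B)=\bigl(\prod_{i<j}A_{i},\,A_{j}\bigr)$ with the \emph{left} factor commutative; your alternative of invoking a right-sided analogue via the opposite ring is unnecessary once you organise the induction as the paper does.
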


\noindent\textbf{Remark: }In the linear Olson theorem established in
\cite{EL2} (where $A$ is not assumed commutative), we only obtain that
$\dim_{\mathrm{k}}(AB)\geq\dim_{\mathrm{k}}(A)+\dim_{\mathrm{k}}(B)-1$, or
$\mathrm{k}\langle AB\rangle$ contains a (left or right) periodic subspace.

\bigskip

To prove the theorem, we need to adapt the arguments of \cite{Xian} to our
noncommutative situation. We begin with the following lemma based on the
linear Dyson transform.

\begin{lemma}
\label{Lem_diff}Let $A$ and $B$ be two finite, nonempty subsets of $K^{\ast}$
such that $A$ is commutative. Then, for each nonzero $a\in\mathrm{k}\langle
A\rangle$, there exists a (commutative) subfield $H_{a}$ of $K$ such that
$\mathrm{k}\subseteq H_{a}\subseteq\mathbb{D}(A)$ and a vector space
$V_{a}\neq\{0\}$ contained in $\mathrm{k}\langle AB\rangle$ such that
$H_{a}V_{a}=V_{a},$ $\mathrm{k}\langle aB\rangle\subseteq V_{a}$ and
\[
\dim_{\mathrm{k}}(V_{a})+\dim_{\mathrm{k}}(H_{a})\geq\dim_{\mathrm{k}}%
(A)+\dim_{\mathrm{k}}(B).
\]

\end{lemma}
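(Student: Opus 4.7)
The plan is to prove Lemma~\ref{Lem_diff} by induction on $\dim_{\mathrm{k}}(A)$, leveraging the key fact that since $A$ is commutative, $L := \mathbb{D}(A)$ is a commutative subfield of $K$, and carrying out a Dyson-type transform within $L$.

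For the base case $\dim_{\mathrm{k}}(A) = 1$, the identity $\mathrm{k}\langle A\rangle = \mathrm{k}a$ gives $\mathrm{k}\langle aB\rangle = \mathrm{k}\langle AB\rangle$ of dimension $\dim_{\mathrm{k}}(B)$, so choosing $V_a := \mathrm{k}\langle AB\rangle$ and $H_a := \mathrm{k}$ already yields $\dim V_a + \dim H_a = \dim A + \dim B$. For the inductive step, a left-shift $(A,B)\mapsto (a_0^{-1}A, a_0B)$ for a fixed $a_0\in A^\ast$ preserves commutativity of $A$ and all relevant dimensions (including that of $\mathrm{k}\langle aB\rangle$ up to multiplication by an invertible scalar), so one may assume $1\in A$. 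Set $V := \mathrm{k}\langle A\rangle\subseteq L$ and $W := \mathrm{k}\langle B\rangle$, with $\dim V \ge 2$.

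I would then split into two cases. In the first, $H^\ast := \{h\in L : h\,\mathrm{k}\langle AB\rangle\subseteq \mathrm{k}\langle AB\rangle\}$ strictly contains $\mathrm{k}$; this $H^\ast$ is automatically a subfield of $L$ (apply the reasoning of Lemma~\ref{lem_stab}). One takes $H_a := H^\ast$ and $V_a := \mathrm{k}\langle AB\rangle$; the containment conditions are immediate, and the dimension inequality $\dim_{\mathrm{k}}\langle AB\rangle + \dim_{\mathrm{k}}H^\ast \ge \dim A + \dim B$ follows from a base-change argument to $H^\ast$ via Lemma~\ref{lemma_stab}, together with an auxiliary application of the inductive hypothesis where $H^\ast$ plays the role of $\mathrm{k}$. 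In the second case $H^\ast = \mathrm{k}$, one selects $a' \in V$ linearly independent of $a$, forms $V^\sharp := V \cap a'V$ (strictly smaller than $V$ precisely because $H^\ast = \mathrm{k}$ prevents $V$ from being stable under multiplication by $a'$), takes any basis $A^\sharp$ of $V^\sharp$ (which is still commutative since $V^\sharp \subseteq L$), and applies the inductive hypothesis to the reduced data $(A^\sharp, B)$. The resulting pair $(H_a, V_a^\sharp)$ is then inflated back to a pair satisfying the required dimensional inequality for the original $(A,B,a)$ by adjoining suitable complementary vectors inside $\mathrm{k}\langle AB\rangle$, using a flag argument comparing $\mathrm{k}\langle A^\sharp B\rangle$ and $\mathrm{k}\langle AB\rangle$.

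The main obstacle is that the classical linear Dyson transform $(V,W)\mapsto (V + a'W,\; a'^{-1}V \cap W)$ which drives the proof of the fully commutative linear Kneser theorem (Theorem~\ref{Th_HL} in~\cite{Xian}) relies on the full commutativity of the ambient ring to check the inclusion $(V+a'W)(a'^{-1}V\cap W)\subseteq VW$: the identity $a'w\cdot w' = w\cdot(a'w')$ for $a'w'\in V$ used there fails in our setting because $a' \in L$ does not commute with $w\in W$. The workaround is to confine the transform to the commutative field $L$ itself (operating only on the $V$-side while keeping $W$ fixed), which in turn forces us to track carefully the lattice of intermediate subfields $\mathrm{k}\subseteq H\subseteq L$. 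This is exactly where the two hypotheses that $\mathrm{k}$ is infinite and that every algebraic element of $K$ is separable over $\mathrm{k}$ enter essentially: via the primitive-element theorem and the consequent finiteness of the subfield lattice inside any finite-dimensional intermediate extension, they ensure the induction terminates and yields the sharp inequality $\dim_{\mathrm{k}}(V_a) + \dim_{\mathrm{k}}(H_a) \ge \dim_{\mathrm{k}}(A) + \dim_{\mathrm{k}}(B)$.
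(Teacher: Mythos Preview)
Your proposal has a genuine gap in the inductive step. The transform you propose, $V^\sharp = V\cap a'V$ with $a'\in V$, only shrinks the $A$-side while leaving $B$ untouched, so after applying the inductive hypothesis you obtain $\dim V_a^\sharp + \dim H_a \ge \dim A^\sharp + \dim B$ with $\dim A^\sharp < \dim A$; you have lost $\dim A - \dim A^\sharp$ and the vague ``inflation'' and ``flag argument'' give no mechanism to recover it while keeping $V_a$ both $H_a$-stable and inside $\mathrm{k}\langle AB\rangle$. Your Case~1 is equally incomplete: the assertion that $\dim_{\mathrm{k}}\langle AB\rangle + \dim_{\mathrm{k}} H^\ast \ge \dim A + \dim B$ whenever $H^\ast\supsetneq\mathrm{k}$ is essentially the content of the linear Kneser theorem itself and cannot be obtained by ``base change plus induction'' without a circular argument. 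You have also misread the hypotheses: Lemma~\ref{Lem_diff} requires neither that $\mathrm{k}$ be infinite nor any separability assumption --- those enter only later, in the proof of Theorem~\ref{Th_KL_Acom}.

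The paper's proof avoids all of this by using a \emph{two-sided} Dyson transform indexed by $e\in\mathrm{k}\langle B\rangle$: after normalizing so that $1\in A\cap B$, one sets $\mathrm{k}\langle A(e)\rangle = \mathrm{k}\langle A\rangle \cap \mathrm{k}\langle B\rangle e^{-1}$ and $\mathrm{k}\langle B(e)\rangle = \mathrm{k}\langle B\rangle + \mathrm{k}\langle A\rangle e$, which preserves the sum $\dim A(e)+\dim B(e)=\dim A+\dim B$ exactly. Your diagnosis of the obstruction is the source of the error: the inclusion $\mathrm{k}\langle A(e)\rangle\,\mathrm{k}\langle B(e)\rangle\subseteq\mathrm{k}\langle AB\rangle$ does \emph{not} require full commutativity of $K$. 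For $v=ze^{-1}\in\mathrm{k}\langle A\rangle$ with $z\in\mathrm{k}\langle B\rangle$ and $w\in\mathrm{k}\langle A\rangle e$, one uses only that $ze^{-1}\in\mathrm{k}\langle A\rangle$ commutes with elements of $\mathrm{k}\langle A\rangle$ to get $vw\in\mathrm{k}\langle A\rangle z\subseteq\mathrm{k}\langle AB\rangle$. The induction then terminates either because some $A(e)$ is strictly smaller, or because $\mathrm{k}\langle A\rangle e\subseteq\mathrm{k}\langle B\rangle$ for all $e$, in which case $H=\mathbb{D}(A)$ and $V=\mathrm{k}\langle B\rangle$ work directly.
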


\begin{proof}
By replacing $A$ by $A^{\prime}=a^{-1}A$, we can assume $a=1$. Indeed, if
there exist a subfield $H\subseteq\mathbb{D}(A^{\prime})$ and a vector space
$V\neq\{0\}$ contained in $\mathrm{k}\langle A^{\prime}B\rangle$ such that
$HV=V$ and $\mathrm{k}\langle B\rangle\subseteq V$ with
\[
\dim_{\mathrm{k}}(V)+\dim_{\mathrm{k}}(H)\geq\dim_{\mathrm{k}}(A^{\prime
})+\dim_{\mathrm{k}}(B),
\]
it suffices to take $V_{a}=aV$ and $H_{a}=H\subseteq\mathbb{D}(A^{\prime
})\subseteq\mathbb{D}(A)$.\ Since $H\subseteq\mathbb{D}(A),$ we must have
$Ha=aH$ for any $a\in A$ and $H(V_{a})=H(aV)=aHV=aV=V_{a}$. Moreover
$\mathrm{k}\langle aB\rangle=a\mathrm{k}\langle B\rangle\subseteq aV=V_{a}$
and $\dim_{\mathrm{k}}(V_{a})+\dim_{\mathrm{k}}(H_{a})\geq\dim_{\mathrm{k}%
}(A)+\dim_{\mathrm{k}}(B)$ because $\dim_{\mathrm{k}}(V_{a})=\dim_{\mathrm{k}%
}(V)$ and $H_{a}=H$.

We can also assume that $1\in B$ by replacing $B$ by $B^{\prime}=Bb^{-1}$.
Indeed, if there exist a subfield $H^{\prime}\subseteq\mathbb{D}(A)$ and a
vector space $V^{\prime}\neq\{0\}$ contained in $\mathrm{k}\langle AB^{\prime
}\rangle$ such that $H^{\prime}V^{\prime}=V^{\prime}$ and $\mathrm{k}\langle
B^{\prime}\rangle\subseteq V^{\prime}$ with
\[
\dim_{\mathrm{k}}(V^{\prime})+\dim_{\mathrm{k}}(H^{\prime})\geq\dim
_{\mathrm{k}}(A)+\dim_{\mathrm{k}}(B^{\prime}),
\]
it suffices to take $V=V^{\prime}b$ and $H=H^{\prime}$.\ We will have then
$V=V^{\prime}b\subseteq\mathrm{k}\langle AB^{\prime}\rangle b=\mathrm{k}%
\langle AB\rangle,$ $HV=H(V^{\prime}b)=(HV^{\prime})b=V^{\prime}b=V,$
$\mathrm{k}\langle B\rangle=\mathrm{k}\langle B^{\prime}\rangle b\subseteq
V^{\prime}b=V$ and
\[
\dim_{\mathrm{k}}(V)+\dim_{\mathrm{k}}(H)\geq\dim_{\mathrm{k}}(A)+\dim
_{\mathrm{k}}(B)
\]
since $\dim_{\mathrm{k}}(B)=\dim_{\mathrm{k}}(B^{\prime})$ and $\dim
_{\mathrm{k}}(V)=\dim_{\mathrm{k}}(V^{\prime})$.

We thus assume in the remainder of the proof that $1\in A\cap B$ and proceed
by induction on $\dim_{\mathrm{k}}(A).$ When dim$_{\mathrm{k}}(A)=1$, we have
$\mathrm{k}\langle A\rangle=\mathrm{k}=\mathbb{D}(A)$. It suffices to take
$V_{1}=V=\mathrm{k}\langle B\rangle\neq\{0\}$ and $H_{a}=H=\mathrm{k}%
=\mathbb{D}(A)$. Assume $\dim_{\mathrm{k}}(A)>1$.\ Given $e\in\mathrm{k}%
\langle B\rangle$ such that $e\neq0$, define $A(e)$ and $B(e)$ to be finite
subsets of $K^{\ast}$ such that
\[
\mathrm{k}\langle A(e)\rangle=\mathrm{k}\langle A\rangle\cap\mathrm{k}\langle
B\rangle e^{-1}\text{ and }\mathrm{k}\langle B(e)\rangle=\mathrm{k}\langle
B\rangle+\mathrm{k}\langle A\rangle e.
\]
Observe that $\mathrm{k}\langle A(e)\rangle$ and $\mathrm{k}\langle
B(e)\rangle$ contain $\mathrm{k}$ since $1\in A\cap B$. Thus we may and do
assume that $1\in A(e)\cap B(e)$.\ Moreover, $\mathrm{k}\langle A(e)\rangle
\mathrm{k}\langle B(e)\rangle$ is contained in $\mathrm{k}\langle AB\rangle$.
Indeed, for $v\in\mathrm{k}\langle A\rangle\cap\mathrm{k}\langle B\rangle
e^{-1}$ and $w\in\mathrm{k}\langle B\rangle,$ we have $vw\in\mathrm{k}\langle
A\rangle\mathrm{k}\langle B\rangle\subseteq\mathrm{k}\langle AB\rangle$
because $v\in\mathrm{k}\langle A\rangle$. Set $v=ze^{-1}$ with $z\in
\mathrm{k}\langle B\rangle$. If $w\in\mathrm{k}\langle A\rangle e,$ we have
$vw\in ze^{-1}\mathrm{k}\langle A\rangle e$. But $ze^{-1}\in\mathrm{k}\langle
A\rangle$ and $A$ is commutative. Therefore, $vw\in\mathrm{k}\langle A\rangle
ze^{-1}e=\mathrm{k}\langle A\rangle z\subseteq\mathrm{k}\langle A\rangle
\mathrm{k}\langle B\rangle\subseteq\mathrm{k}\langle AB\rangle$.\ In
particular, $\dim_{\mathrm{k}}(A(e)B(e))\leq\dim_{\mathrm{k}}(AB)$.\ We get%
\begin{multline*}
\dim_{\mathrm{k}}(A(e))+\dim_{\mathrm{k}}(B(e))=\dim_{\mathrm{k}}%
(\mathrm{k}\langle A\rangle\cap\mathrm{k}\langle B\rangle e^{-1}%
)+\dim_{\mathrm{k}}(\mathrm{k}\langle B\rangle+\mathrm{k}\langle A\rangle
e)=\\
\dim_{\mathrm{k}}(\mathrm{k}\langle A\rangle e\cap\mathrm{k}\langle
B\rangle)+\dim_{\mathrm{k}}(\mathrm{k}\langle B\rangle+\mathrm{k}\langle
A\rangle e)=\dim_{\mathrm{k}}(Ae)+\dim_{\mathrm{k}}(B)=\dim_{\mathrm{k}%
}(A)+\dim_{\mathrm{k}}(B).
\end{multline*}
Also $A(e)\subseteq\mathrm{k}\langle A\rangle$.\ 

Assume $\mathrm{k}\langle A(e)\rangle=\mathrm{k}\langle A\rangle$ for any
nonzero $e\in\mathrm{k}\langle B\rangle$.\ Then $\mathrm{k}\langle A\rangle
e\subseteq\mathrm{k}\langle B\rangle$ for any nonzero $e\in\mathrm{k}\langle
B\rangle$. Thus $\mathrm{k}\langle AB\rangle\subseteq\mathrm{k}\langle
B\rangle$.\ Since $1\in A$, we have in fact $\mathrm{k}\langle AB\rangle
=\mathrm{k}\langle B\rangle$.\ The sub division ring $H=\mathbb{D}(A)$ is a
field since $A$ is commutative and it contains $\mathrm{k}$ since $1\in A$.
Take $V=\mathrm{k}\langle B\rangle\neq\{0\}$. Then $HV=V$ since $AV=V$.\ We
clearly have $V=\mathrm{k}\langle AB\rangle$ and $B\subseteq V$ as desired.

Now assume $\mathrm{k}\langle A(e)\rangle\neq\mathrm{k}\langle A\rangle$ for
at least one nonzero $e\in\mathrm{k}\langle A\rangle$.\ Then $0<\dim
_{\mathrm{k}}(A(e))<\dim_{\mathrm{k}}(A)$ and $1\in A(e)\cap B(e)$.\ By our
induction hypothesis, there exist a subfield $H$ of $\mathbb{D}(A(e))\subseteq
\mathbb{D}(A)$ containing $\mathrm{k}$ and a nonzero $\mathrm{k}$-vector space
$V\subseteq\mathrm{k}\langle A(e)B(e)\rangle\subseteq\mathrm{k}\langle
AB\rangle$, such that $HV=V$ and $\mathrm{k}\langle B\rangle\subseteq
\mathrm{k}\langle B(e)\rangle\subseteq V$ with
\[
\dim_{\mathrm{k}}(V)+\dim_{\mathrm{k}}(H)\geq\dim_{\mathrm{k}}(A(e))+\dim
_{\mathrm{k}}(B(e))=\dim_{\mathrm{k}}(A)+\dim_{\mathrm{k}}(B).
\]
The subfield $H\subseteq\mathbb{D}(A)$ and the nonzero space $V\supset
\mathrm{k}\langle B\rangle$ satisfy the statement of the lemma for the pair of
subsets $A$ and $B$ which concludes the proof.
\end{proof}

\bigskip

As in the proof of Theorem \ref{Th_HL}, we also need the following lemma,
which is an application of the Vandermonde determinant.

\begin{lemma}
\label{Lemma_VDM}Let $V$ be a $n$-dimensional vector space over the infinite
field $\mathrm{k}$.\ Assume $x_{1},\ldots,x_{n}$ form a basis of $V$ over
$\mathrm{k}$. Then any $n$ vectors in the set
\[
\{x_{1}+\alpha x_{2}+\cdots+\alpha^{n-1}x_{n}\mid\alpha\in\mathrm{k}\}
\]
form a basis of $V$ over $\mathrm{k}$.
\end{lemma}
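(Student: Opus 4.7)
The plan is a direct application of the Vandermonde determinant. Let $v_1,\ldots,v_n$ be any $n$ vectors in the indicated set; by definition there exist scalars $\alpha_1,\ldots,\alpha_n\in\mathrm{k}$ such that
\[
v_i = x_1 + \alpha_i x_2 + \cdots + \alpha_i^{n-1} x_n, \qquad i=1,\ldots,n.
\]
I first observe that the map $\alpha\mapsto x_1+\alpha x_2+\cdots+\alpha^{n-1}x_n$ is injective (since $x_1,\ldots,x_n$ are linearly independent, two distinct values of $\alpha$ give distinct vectors). Hence $n$ distinct vectors of the set correspond to $n$ distinct scalars $\alpha_1,\ldots,\alpha_n$; that this is possible at all uses the hypothesis that $\mathrm{k}$ is infinite (so in particular $|\mathrm{k}|\geq n$).

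Next, I write the coordinate matrix $M=(\alpha_i^{j-1})_{1\leq i,j\leq n}$ of $(v_1,\ldots,v_n)$ in the basis $(x_1,\ldots,x_n)$. This is precisely the Vandermonde matrix associated with $\alpha_1,\ldots,\alpha_n$, whose determinant is
\[
\det M \;=\; \prod_{1\leq i<j\leq n}(\alpha_j-\alpha_i).
\]
Since the $\alpha_i$ are pairwise distinct, $\det M\neq 0$, so $v_1,\ldots,v_n$ are $\mathrm{k}$-linearly independent. As $\dim_{\mathrm{k}}V=n$, they form a basis of $V$.

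There is no substantive obstacle: the only subtlety is the use of the infiniteness hypothesis to guarantee the existence of $n$ distinct parameters $\alpha_i$. Everything else is the classical nonvanishing of the Vandermonde determinant.
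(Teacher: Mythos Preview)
Your proof is correct and is exactly the argument the paper has in mind: the paper does not write out a proof but simply states that the lemma ``is an application of the Vandermonde determinant,'' which is precisely what you carry out. Your added remark about injectivity (so that $n$ distinct vectors correspond to $n$ distinct parameters $\alpha_i$) is the only point needing justification, and you handle it correctly.
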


The proof of Theorem \ref{Th_KL_Acom} requires a last lemma. Consider a field
extension $F$ over the infinite field $\mathrm{k}$. Let $y_{1},\ldots,y_{r}$
be algebraically independent indeterminates. Set $\mathrm{k}^{\prime
}=\mathrm{k}(y_{1},\ldots,y_{r})$ and $F^{\prime}=F(y_{1},\ldots,y_{r})$.

\begin{lemma}
\label{Lem_tranfert}Assume $\dim_{\mathrm{k}^{\prime}}(F^{\prime})$ is finite.
Then $\dim_{\mathrm{k}}(F)=\dim_{\mathrm{k}^{\prime}}(F^{\prime})$.
\end{lemma}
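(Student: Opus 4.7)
The plan is to establish the equality by proving both inequalities separately. For $\dim_{\mathrm{k}}(F)\leq\dim_{\mathrm{k}^{\prime}}(F^{\prime})$, I would show that any finite, $\mathrm{k}$-linearly independent family $e_{1},\ldots,e_{n}$ in $F$ remains $\mathrm{k}^{\prime}$-linearly independent when viewed inside $F^{\prime}$. Given a putative relation $\sum_{i}c_{i}e_{i}=0$ with $c_{i}\in\mathrm{k}^{\prime}$, one clears denominators to reduce to $c_{i}\in\mathrm{k}[y_{1},\ldots,y_{r}]$. Expanding each $c_{i}$ in the monomial basis $y^{\alpha}$ and collecting coefficients rewrites the equation as $\sum_{\alpha}\bigl(\sum_{i}c_{i,\alpha}e_{i}\bigr)y^{\alpha}=0$ in the polynomial ring $F[y_{1},\ldots,y_{r}]$. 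Each coefficient must then vanish, and the $\mathrm{k}$-linear independence of the $e_{i}$'s forces $c_{i,\alpha}=0$ for all $i,\alpha$, hence $c_{i}=0$. In particular, since $\dim_{\mathrm{k}^{\prime}}(F^{\prime})$ is finite by hypothesis, $n:=\dim_{\mathrm{k}}(F)$ is finite as well, and bounded above by $\dim_{\mathrm{k}^{\prime}}(F^{\prime})$.

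For the reverse inequality, I would fix a $\mathrm{k}$-basis $e_{1},\ldots,e_{n}$ of $F$ and consider $L:=\sum_{i=1}^{n}\mathrm{k}^{\prime}e_{i}\subseteq F^{\prime}$. By the previous paragraph this sum is direct, so $\dim_{\mathrm{k}^{\prime}}(L)=n$. The $\mathrm{k}$-structure constants $e_{i}e_{j}=\sum_{k}\gamma_{ij}^{k}e_{k}\in F$ have coefficients in $\mathrm{k}\subseteq\mathrm{k}^{\prime}$, so $L$ is closed under multiplication and contains $1$; hence $L$ is a $\mathrm{k}^{\prime}$-subalgebra of $F^{\prime}$. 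As a subring of the field $F^{\prime}$ it is an integral domain, and being finite-dimensional over the field $\mathrm{k}^{\prime}$ it must itself be a field. Since $L$ contains both $F$ and $\mathrm{k}^{\prime}$, it contains the compositum $F(y_{1},\ldots,y_{r})=F^{\prime}$, so $L=F^{\prime}$ and $\dim_{\mathrm{k}^{\prime}}(F^{\prime})=n=\dim_{\mathrm{k}}(F)$, as desired.

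There is no serious obstacle in this argument; the key conceptual step is to bypass a direct construction of inverses for elements of $F[y_{1},\ldots,y_{r}]$ by invoking the classical principle that a finite-dimensional integral domain over a field is itself a field. Note that neither the infiniteness of $\mathrm{k}$ nor any separability assumption plays any role.
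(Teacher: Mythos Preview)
Your proof is correct and the second half (building the $\mathrm{k}^{\prime}$-subalgebra $L$, observing it is a field, and showing it contains both $F$ and $\mathrm{k}^{\prime}$) coincides with the paper's argument essentially verbatim. The first half, however, differs: for the inequality $\dim_{\mathrm{k}}(F)\leq\dim_{\mathrm{k}^{\prime}}(F^{\prime})$ the paper clears denominators and then, using that $\mathrm{k}$ is infinite, chooses a point $(u_{1},\ldots,u_{r})\in\mathrm{k}^{r}$ at which none of the nonzero polynomials $P_{i}$ vanish, thereby specializing the $\mathrm{k}^{\prime}$-relation to a nontrivial $\mathrm{k}$-relation. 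You instead expand in the monomial basis of $F[y_{1},\ldots,y_{r}]$ and compare coefficients, which is cleaner and, as you note, dispenses with the hypothesis that $\mathrm{k}$ be infinite. Both approaches are standard; yours has the modest advantage of showing that the infiniteness assumption in the ambient setup is not actually used in this lemma.
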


\begin{proof}
Let $b_{1},\ldots,b_{n}$ be elements in $F$ linearly independent over
$\mathrm{k}$. Assume there exists nonzero elements $P_{1},\ldots,P_{n}$ in
$\mathrm{k}^{\prime}$ such that%
\[
\sum_{i=1}^{n}P_{i}b_{i}=0\text{.}%
\]
By multiplying by an overall polynomial of $\mathrm{k}^{\prime}$, we can
assume that the $P_{i}$'s belong to $k[y_{1},\ldots,y_{r}]$. Set
$I=\{i\in\{1,\ldots n\}\mid P_{i}\neq0\}$. Assume $I$ is nonempty. Set
$P=\prod_{i\in I}P_{i}$ and consider $V=\{(u_{1},\ldots,u_{r})\in
\mathrm{k}^{r}\mid P(u_{1},\ldots,u_{r})=0\}$. Since $P\neq0$ and $\mathrm{k}$
is infinite, $V\neq k^{n}$. Therefore, there exists $(u_{1},\ldots,u_{r})$ in
$\mathrm{k}^{r}$ such that $P(u_{1},\ldots,u_{r})\neq0$. Then for any $i\in
I$, we have $P_{i}(u_{1},\ldots,u_{r})\neq0$ and
\[
\sum_{i=1}^{n}P_{i}(u_{1},\ldots,u_{r})b_{i}=0
\]
which contradicts our assumption that $b_{1},\ldots,b_{n}$ are linearly
independent over $\mathrm{k}$. We have proved that $\dim_{\mathrm{k}}%
(F)\leq\dim_{\mathrm{k}^{\prime}}(F^{\prime})$. In particular, $\dim
_{\mathrm{k}}(F)$ is finite. We assume from now on that $\dim_{\mathrm{k}%
}(F)=n$ and $\{b_{1},\ldots,b_{n}\}$ is a basis of $F$ over $\mathrm{k}$. Let%
\[
E=%
{\textstyle\bigoplus\limits_{i=1}^{n}}
\mathrm{k}^{\prime}b_{i}\text{.}%
\]
Clearly $E$ is a $n$-dimensional $\mathrm{k}^{\prime}$-space closed under
multiplication. So by Lemma \ref{lemma_stab}, $E$ is a subfield of $F^{\prime
}$. Moreover $E$ contains the polynomial ring $F[y_{1},\ldots,y_{r}]$ since
each element of $F$ decomposes as a $\mathrm{k}$-linear combination of the
$b_{i}$'s. This implies that $E=F^{\prime}$ and $\dim_{\mathrm{k}}(F)\geq
\dim_{\mathrm{k}^{\prime}}(F^{\prime})$. So $\dim_{\mathrm{k}}(F)=\dim
_{\mathrm{k}^{\prime}}(F^{\prime})$ as desired.
\end{proof}

\bigskip

We are now ready to prove \ref{Th_KL_Acom}.

\begin{proof}
(of Theorem \ref{Th_KL_Acom})

1: Let $\mathcal{B}=\{x_{1},\ldots,x_{n}\}$ be a basis of $\mathrm{k}\langle
A\rangle$.\ For any $\alpha\in\mathrm{k}$, set $x_{\alpha}=x_{1}+\alpha
x_{2}+\cdots+\alpha^{n-1}x_{n}$.\ Observe that $x_{\alpha}\neq0$. Since
$\mathrm{k}$ is infinite and by Lemma \ref{Lem_diff}, there exist a subfield
$H_{\alpha}$ such that $\mathrm{k}\subseteq H_{\alpha}\subseteq\mathbb{D}%
(A)\subseteq K$ and a $\mathrm{k}$-vector space $V_{\alpha}\subseteq
\mathrm{k}\langle AB\rangle$ with $x_{\alpha}B\subseteq V_{\alpha},$
$H_{\alpha}V_{\alpha}=V_{\alpha}$ and $\dim_{\mathrm{k}}(V_{\alpha}%
)+\dim_{\mathrm{k}}(H_{\alpha})\geq\dim_{\mathrm{k}}(A)+\dim_{\mathrm{k}}%
(B)$.\ Since $V_{\alpha}\neq\{0\}$ and $H_{\alpha}$ stabilizes $V_{\alpha
}\subseteq\mathrm{k}\langle AB\rangle$, there exists a nonzero vector $v\in
V_{\alpha}$ such that $H_{\alpha}v\subseteq\mathrm{k}\langle AB\rangle
$.\ Hence $H_{\alpha}\subseteq v^{-1}\mathrm{k}\langle AB\rangle$ and
$\dim_{\mathrm{k}}(H_{\alpha})$ is finite.\ Therefore $H_{\alpha}%
\subseteq\mathbb{D}(A)$ is a finite field extension of $\mathrm{k}$. Let $F$
be the algebraic closure of $\mathrm{k}$ in $\mathbb{D}(A)$.\ The elements of
$H_{\alpha}$ belong to $\mathbb{D}(A)$ and are algebraic over $\mathrm{k}$
since $\dim_{\mathrm{k}}(H_{\alpha})$ is finite. Therefore $H_{\alpha
}\subseteq F$ for any $\alpha\in\mathrm{k}$.

The field $\mathbb{D}(A)$ is finitely generated by $x_{1},\ldots,x_{n}$.
Therefore, if $F=\mathbb{D}(A)$, each $x_{i}$ is algebraic over $\mathrm{k}$
and $\dim_{\mathrm{k}}(F)$ is finite. If $F\subsetneqq\mathbb{D}(A),$ we can
choose a family $y_{1},\ldots,y_{r}$ in $\mathbb{D}(A)$ such that
$\mathrm{k}^{\prime}=\mathrm{k}(y_{1},\ldots,y_{r})$ is purely transcendental
over $\mathrm{k}$ and $\mathbb{D}(A)$ is algebraic finitely generated over
$\mathrm{k}^{\prime}$. Then $\dim_{\mathrm{k}^{\prime}}(\mathbb{D}(A))$ is
finite. Thus by Lemma \ref{Lem_tranfert} $\dim_{\mathrm{k}}(F)=\dim
_{\mathrm{k}^{\prime}}(F(y_{1},\ldots,y_{r}))\leq\dim_{\mathrm{k}^{\prime}%
}(\mathbb{D}(A))$ is finite. So in both cases, we obtain that $\dim
_{\mathrm{k}}(F)$ is finite.

By the separability hypothesis, we obtain that the extension $F$ is separable
over $\mathrm{k}$. Thus, it only admits a finite number of intermediate
extensions. There should exist $n$ distinct elements $\alpha_{1},\ldots
,\alpha_{n}$ in $\mathrm{k}$ such that
\[
H_{\alpha_{1}}=H_{\alpha_{2}}=\cdots=H_{\alpha_{n}}=H.
\]
By Lemma \ref{Lemma_VDM}, $x_{\alpha_{1}},\ldots,x_{\alpha_{n}}$ form a basis
of $\mathrm{k}\langle A\rangle$ over $\mathrm{k}$.\ We thus have
$\mathrm{k}\langle AB\rangle=\sum_{i=1}^{n}x_{\alpha_{i}}\mathrm{k}\langle
B\rangle\subseteq\sum_{i=1}^{n}V_{\alpha_{i}}$ since $x_{\alpha_{i}}%
\mathrm{k}\langle B\rangle\subseteq V_{\alpha_{i}}$ for any $i=1,\ldots
,n$.\ On the other hand, $V_{\alpha_{i}}\subseteq\mathrm{k}\langle AB\rangle$
for any $i=1,\ldots,n$. Hence $\mathrm{k}\langle AB\rangle=\sum_{i=1}%
^{n}V_{\alpha_{i}}$ is stabilized by $H$. If $\mathrm{k}\subsetneqq H$, then
$\mathrm{k}\langle AB\rangle$ is periodic and we are done. Otherwise,
$\mathrm{k}=H$ and we have%
\[
\dim_{\mathrm{k}}(A)+\dim_{\mathrm{k}}(B)\leq\dim_{\mathrm{k}}(V_{\alpha_{i}%
})+1
\]
for any $i=1,\ldots,n$. Since $V_{\alpha_{i}}\subseteq\mathrm{k}\langle
AB\rangle,$ we obtain%
\[
\dim_{\mathrm{k}}(A)+\dim_{\mathrm{k}}(B)\leq\dim_{\mathrm{k}}(AB)+1
\]
as desired.

2: In part (II) of the proof of Theorem \ref{Th_Lin_Kne_n}, we do not use any
commutativity hypothesis on $K$. So both assertions of Theorem
\ref{Th_KL_Acom} are equivalent by exactly the same arguments.
\end{proof}

\bigskip

\noindent\textbf{Remarks:}

\begin{enumerate}
\item When $B$ is assumed commutative, we have a similar statement by
replacing left periodicity by right periodicity.

\item Also observe that the separability hypothesis is always satisfied when
$\mathrm{k}$ has characteristic $0$.

\item Theorem \ref{Th_KL_Acom} means that when $A$ is commutative and
$\dim_{\mathrm{k}}(AB)\leq\dim_{\mathrm{k}}(A)+\dim_{\mathrm{k}}(B)-2$,
$\mathrm{k}\langle AB\rangle$ is an left $H$-module.\ When $A=B$ or $A^{-1}%
=B$, this suggests that spaces $\mathrm{k}\langle A\rangle$ with
$\dim_{\mathrm{k}}(A^{2})=O(\dim A)$ should have interesting properties
related to some $H$-modules of $K$ where $H$ is a subdivision ring of $K$. We
will make this observation precise in the following paragraphs.

\bigskip
\end{enumerate}

Theorem \ref{Th_KL_Acom} also permits one to construct $\mathrm{k}$-subspaces
in $K$ containing subdivision rings. Assume $\dim_{\mathrm{k}}(K)$ is finite
and let $a_{1},\ldots,a_{n}$ be a sequence of elements in $K^{\ast}$
\emph{distinct from }$1$ (with repetition allowed).\ For any nonempty subset
$S\subseteq\{1,\ldots,n\}$, write $a_{S}:=\prod_{i\in S}a_{i}$. Denote by $V$
the $\mathrm{k}$-subspace in $K$ generated by the elements $a_{S}$ when $S$
runs over the nonempty subsets of $\{1,\ldots,n\}$.

\begin{corollary}
Assume $\mathrm{k}$ is infinite, $\dim_{\mathrm{k}}(K)=n>1$ and every element
of $K$ is separable over $\mathrm{k}$.\ Then, with the above notation, the
space $V$ contains a sub division ring $H\varsupsetneqq\mathrm{k}$.
\end{corollary}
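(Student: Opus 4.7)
The natural strategy is to apply Theorem~\ref{Th_KL_Acom}(2) to the two-element subsets $A_i := \{1, a_i\} \subseteq K^{\ast}$ for $i = 1, \ldots, n$. Each $A_i$ is commutative (trivially, as $1$ commutes with $a_i$), so the hypotheses are met. The Minkowski product $A_1 A_2 \cdots A_n$ enumerates exactly the elements $a_S$ for $S \subseteq \{1, \ldots, n\}$, with $a_{\emptyset} = 1$, so its $\mathrm{k}$-span equals $W := V + \mathrm{k}$.

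Observe that $\dim_{\mathrm{k}}(A_i) = 2$ iff $a_i \notin \mathrm{k}$; if some $a_i$ lies in $\mathrm{k}^{\ast}$, then $V$ already contains the nonzero scalar $a_i$, whence $\mathrm{k} \subseteq V$. I would therefore first strip from the sequence the indices with $a_i \in \mathrm{k}^{\ast}$ and assume henceforth that every $a_i \notin \mathrm{k}$. In this case, Theorem~\ref{Th_KL_Acom} gives the dichotomy
\[
\dim_{\mathrm{k}}(W) \geq 2n - (n-1) = n+1 \quad \text{or} \quad W \text{ is left periodic},
\]
and the first alternative is excluded by $\dim_{\mathrm{k}}(W) \leq \dim_{\mathrm{k}}(K) = n$. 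Hence $W$ is left periodic, its left stabilizer $H := H_{\mathrm{k},l}(W)$ is a sub division ring of $K$ with $\mathrm{k} \subsetneq H$, and from $1 \in W$ and $HW \subseteq W$ we deduce $H = H \cdot 1 \subseteq W = V + \mathrm{k}$.

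The main obstacle is to sharpen the inclusion $H \subseteq W$ to $H \subseteq V$; equivalently, to show $\mathrm{k} \subseteq V$. My plan is to exploit the $H$-module decomposition $W = \bigoplus_{s \in S} Hs$ provided by Lemma~\ref{lemma_stab}, chosen with $1$ among the representatives $s$ so that $H$ itself is one of the summands, together with the algebraic relations
\[
a_i^{d_i} + c_{i,d_i-1}a_i^{d_i-1} + \cdots + c_{i,1} a_i + c_{i,0} = 0, \quad c_{i,0} \in \mathrm{k}^{\ast},
\]
coming from the minimal polynomials of the $a_i$ (algebraic of degree $d_i \leq n$, with nonzero constant term because $a_i \neq 0$). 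A combinatorial dimension argument, using that the sequence has length exactly $n = \dim_{\mathrm{k}}(K)$ and that $V$ contains the many products $a_S$, should force a nontrivial scalar into $V$, giving $W = V$ and thus $H \subseteq V$ as required. A backup route, should this direct argument be delicate, is to reapply Theorem~\ref{Th_KL_Acom} to a variant such as $A_1 = \{a_1\}$, $A_j = \{1, a_j\}$ for $j \geq 2$, whose product span already lies in $V$, and then transport the resulting stabilizer inside $V$ via multiplication by $a_1^{-1}$.
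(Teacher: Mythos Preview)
Your overall strategy is the same as the paper's: set $A_i=\{1,a_i\}$, apply Theorem~\ref{Th_KL_Acom}(2), rule out the dimension alternative, and then use $1\in\mathrm{k}\langle A_1\cdots A_n\rangle$ to place the left stabilizer $H$ inside that span. You also correctly isolate the crux of the matter: one must show $1\in V$, i.e.\ $W=V$. This is exactly where your proposal has a genuine gap.

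Neither of your two suggested routes closes it. The minimal-polynomial idea would express $1$ as a $\mathrm{k}$-combination of $a_i,a_i^{2},\ldots,a_i^{d_i}$, but $V$ is spanned by products $a_S$ indexed by \emph{subsets} $S\subseteq\{1,\ldots,n\}$, each index used at most once, so $a_i^{2}$ need not lie in $V$ at all. The backup with $A_1=\{a_1\}$ does produce a span $U\subseteq V$; if $U$ is left periodic you get $Ha_1\subseteq U\subseteq V$ (since $a_1\in U$), but $Ha_1$ is only a coset, not a sub division ring, and ``transporting by $a_1^{-1}$'' lands $a_1^{-1}Ha_1$ inside $\mathrm{k}\langle\{1,a_2\}\cdots\{1,a_n\}\rangle$, which again contains $1$ and is not contained in $V$ --- you are back to the same obstacle with one fewer factor and no induction hypothesis to invoke.

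The paper's missing device is short and direct. Look at the $n$ nested products $p_i:=a_1a_2\cdots a_i=a_{\{1,\ldots,i\}}\in V$. If $p_1,\ldots,p_n$ are linearly independent over $\mathrm{k}$ they span $K$ (since $\dim_{\mathrm{k}}K=n$), so $V=K\ni1$. Otherwise a nontrivial relation $\sum_{i\ge i_0}\alpha_ip_i=0$ with $\alpha_{i_0}\neq0$, after left multiplication by $p_{i_0}^{-1}$, reads
\[
1=-\sum_{i>i_0}\frac{\alpha_i}{\alpha_{i_0}}\,a_{i_0+1}\cdots a_i\in V,
\]
since each $p_{i_0}^{-1}p_i=a_{i_0+1}\cdots a_i$ is itself one of the $a_S$. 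With $1\in V$ established, your argument (which is also the paper's) goes through verbatim.
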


\begin{proof}
For any $i=1,\ldots,n,$ set $A_{i}=\{1,a_{i}\}$ and $V^{\prime}=\mathrm{k}%
\langle A_{1}\cdots A_{n}\rangle$. Write $p_{i}=a_{1}\cdots a_{i}$ for any
$i=1,\ldots,n$. If $p_{1},\ldots,p_{n}$ are linearly independent, $V=K$ since
$\dim_{\mathrm{k}}(K)=n$. In particular, $1\in V$.

Now assume $p_{1},\ldots,p_{n}$ are not linearly independent. There exist
$i_{0}\in\{1,\ldots,n\}$ and elements $\alpha_{i},i=i_{0},\ldots,n$, in
$\mathrm{k}$ such that
\[
\sum_{i=i_{0}}^{n}\alpha_{i}p_{i}=0\text{ and }\alpha_{i_{0}}\neq0.
\]
Dividing by $\alpha_{i_{0}}p_{i_{0}},$ we obtain%
\[
1=-\sum_{i=i_{0}+1}^{n}\frac{\alpha_{i}}{\alpha_{i_{0}}}a_{i_{0}+1}\cdots
a_{i}.
\]
We thus also obtain that $1\in V$.

We have proved that $1\in V$. This implies that $V^{\prime}=V$.\ If $V$ is
left periodic, there exists a sub-division ring $H\subseteq K$ such that
$H\varsupsetneqq\mathrm{k}$ and $HV\subseteq V$. Since $1\in V$, we have
$H\subseteq V$ as desired. We can thus assume that $V=\mathrm{k}\langle
A_{1}\cdots A_{n}\rangle$ is not periodic. As the sets $A_{i},i=1,\ldots n$
are commutative, we can apply 2 of Theorem \ref{Th_Lin_Kne_n} which gives
\[
\dim_{\mathrm{k}}(A_{1}\cdots A_{n})\geq\sum_{i=1}^{n}\dim_{\mathrm{k}}%
(A_{i})-(n-1)\geq2n-(n-1)\geq n+1.
\]
We thus obtain a contradiction with the hypothesis $\dim_{\mathrm{k}}(K)=n$.
\end{proof}

\subsection{Linear hamidoune's connectivity}

The notion of connectivity for a subset $S$ of a group $G$ was developed by
Hamidoune in \cite{Hami2}.\ As suggested by Tao in \cite{Tao}, it is
interesting to generalize Hamidoune's definition by introducing an additional
parameter $\lambda$. The purpose of this paragraph is to define a natural
linear version of this connectivity used in \cite{Tao} suited for the
$\mathrm{k}$-subspaces $V$ in $K$, where $K$ is a division ring containing
$\mathrm{k}$ in its center.\ Assume $V$ is a finite-dimensional fixed
$\mathrm{k}$-subspace of $K$ and $\lambda$ is a real parameter. For any
finite-dimensional $\mathrm{k}$-subspace $W$ of $K$, we define%
\begin{equation}
c(W):=\dim_{\mathrm{k}}(WV)-\lambda\dim_{\mathrm{k}}(W). \label{connectivity}%
\end{equation}
For any $x\in K^{\ast}$, we have immediately that $c(xW)=c(W).\;$

\begin{lemma}
For any finite-dimensional subspaces $W_{1},W_{2}$ and $V$ of $K$, we have%
\[
c(W_{1}+W_{2})+c(W_{1}\cap W_{2})\leq c(W_{1})+c(W_{2}).
\]

\end{lemma}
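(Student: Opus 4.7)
The plan is to reduce the claimed inequality to the ordinary submodularity of the functional $W \mapsto \dim_{\mathrm{k}}(WV)$, and then verify that submodularity using only the dimension formula for sums and intersections together with the distributivity of multiplication in $K$.

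First I would expand both sides of the desired inequality using the definition $c(W) = \dim_{\mathrm{k}}(WV) - \lambda \dim_{\mathrm{k}}(W)$. The $\lambda$ terms contribute
\[
-\lambda\bigl(\dim_{\mathrm{k}}(W_1+W_2) + \dim_{\mathrm{k}}(W_1\cap W_2)\bigr)
\]
on the left and $-\lambda\bigl(\dim_{\mathrm{k}}(W_1)+\dim_{\mathrm{k}}(W_2)\bigr)$ on the right; these are equal by the classical identity $\dim_{\mathrm{k}}(W_1+W_2) + \dim_{\mathrm{k}}(W_1 \cap W_2) = \dim_{\mathrm{k}}(W_1) + \dim_{\mathrm{k}}(W_2)$. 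So the $\lambda$-dependence cancels, and the lemma reduces to showing
\[
\dim_{\mathrm{k}}\bigl((W_1+W_2)V\bigr) + \dim_{\mathrm{k}}\bigl((W_1\cap W_2)V\bigr) \leq \dim_{\mathrm{k}}(W_1 V) + \dim_{\mathrm{k}}(W_2 V),
\]
where as elsewhere in the paper $\dim_{\mathrm{k}}(XV)$ denotes the dimension of $\mathrm{k}\langle XV\rangle$.

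Next I would use two elementary observations about the product of subspaces in $K$. By distributivity, every element of $(W_1+W_2)V$ is a sum of an element of $W_1 V$ and an element of $W_2 V$, and conversely, so $\mathrm{k}\langle (W_1+W_2)V\rangle = \mathrm{k}\langle W_1 V\rangle + \mathrm{k}\langle W_2 V\rangle$. Likewise $(W_1\cap W_2)V \subseteq W_1 V \cap W_2 V$, hence $\mathrm{k}\langle (W_1\cap W_2)V\rangle \subseteq \mathrm{k}\langle W_1 V\rangle \cap \mathrm{k}\langle W_2 V\rangle$.

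Finally, applying the dimension formula to the two subspaces $\mathrm{k}\langle W_1 V\rangle$ and $\mathrm{k}\langle W_2 V\rangle$ of $K$, I obtain
\[
\dim_{\mathrm{k}}\bigl((W_1+W_2)V\bigr) + \dim_{\mathrm{k}}\bigl((W_1\cap W_2)V\bigr) \leq \dim_{\mathrm{k}}\bigl(\mathrm{k}\langle W_1V\rangle + \mathrm{k}\langle W_2V\rangle\bigr) + \dim_{\mathrm{k}}\bigl(\mathrm{k}\langle W_1V\rangle \cap \mathrm{k}\langle W_2V\rangle\bigr) = \dim_{\mathrm{k}}(W_1V) + \dim_{\mathrm{k}}(W_2V),
\]
which together with the cancellation of the $\lambda$ terms yields the lemma. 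There is no real obstacle in this argument; the only thing to keep in mind is that submodularity of $W \mapsto \dim_{\mathrm{k}}(WV)$ uses the inclusion $(W_1\cap W_2)V \subseteq W_1 V \cap W_2V$ but not equality, which is consistent with the general remark made earlier in the paper that $\mathrm{k}\langle A\cap B\rangle$ may be strictly contained in $\mathrm{k}\langle A\rangle \cap \mathrm{k}\langle B\rangle$.
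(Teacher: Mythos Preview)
Your proof is correct and follows essentially the same approach as the paper: both use the dimension formula for $W_1,W_2$ to handle the $\lambda$ terms, and both establish submodularity of $W\mapsto\dim_{\mathrm{k}}(WV)$ via the identity $\mathrm{k}\langle(W_1+W_2)V\rangle=\mathrm{k}\langle W_1V\rangle+\mathrm{k}\langle W_2V\rangle$ together with the inclusion $\mathrm{k}\langle(W_1\cap W_2)V\rangle\subseteq\mathrm{k}\langle W_1V\rangle\cap\mathrm{k}\langle W_2V\rangle$. The only cosmetic difference is that you cancel the $\lambda$ contribution first and then prove the dimension inequality, whereas the paper derives the dimension inequality first and then subtracts $\lambda$ copies of the identity at the end.
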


\begin{proof}
We have
\begin{equation}
\dim_{\mathrm{k}}(W_{1}+W_{2})+\dim_{\mathrm{k}}(W_{1}\cap W_{2}%
)=\dim_{\mathrm{k}}(W_{1})+\dim_{\mathrm{k}}(W_{2}) \label{dimform}%
\end{equation}
and%
\[
\dim_{\mathrm{k}}(\mathrm{k}\langle W_{1}V\rangle+\mathrm{k}\langle
W_{2}V\rangle)+\dim_{\mathrm{k}}(\mathrm{k}\langle W_{1}V\rangle\cap
\mathrm{k}\langle W_{2}V\rangle)=\dim_{\mathrm{k}}(W_{1}V)+\dim_{\mathrm{k}%
}(W_{2}V).
\]
Observe that $\mathrm{k}\langle(W_{1}+W_{2})\cdot V\rangle=\mathrm{k}\langle
W_{1}V\rangle+\mathrm{k}\langle W_{2}V\rangle$ and $\mathrm{k}\langle
(W_{1}\cap W_{2})\cdot V\rangle\subseteq\mathrm{k}\langle W_{1}V\rangle
\cap\mathrm{k}\langle W_{2}V\rangle$.\ This gives%
\begin{equation}
\dim_{\mathrm{k}}((W_{1}+W_{2})\cdot V)+\dim_{\mathrm{k}}(W_{1}\cap
W_{2})\cdot V)\leq\dim_{\mathrm{k}}(W_{1}V)+\dim_{\mathrm{k}}(W_{2}V).
\label{ineq}%
\end{equation}
We then obtain the desired equality by subtracting from (\ref{ineq}),
$\lambda$ copies of (\ref{dimform}).
\end{proof}

\bigskip

Similarly to \cite{Hami2}, we define the \emph{connectivity} $\kappa
=\kappa(V)$ as the infimum of $c(W)$ over all finite-dimensional, nonzero
$\mathrm{k}$-subspaces of $K$.\ A \emph{fragment} of $V$ is a
finite-dimensional $\mathrm{k}$-subspace of $K$ which attains the infimum
$\kappa$. An \emph{atom} of $V$ is a fragment of minimal dimension. Since
$c(xW)=c(W)$, any left translate of a fragment is a fragment and any left
translate of an atom is an atom. Since $\dim_{\mathrm{k}}(WV)\geq
\dim_{\mathrm{k}}(W),$ we have%
\begin{equation}
c(W)\geq(1-\lambda)\dim_{\mathrm{k}}(W). \label{maj_connecti}%
\end{equation}
We observe that when $\lambda<1$, $c(W)$ is always positive and takes a
discrete set of values.\ Therefore, when $\lambda<1$, there exists at least
one fragment and at least one atom. \emph{In the remainder of this paragraph
we will assume that }$\lambda<1$. Let $W_{1}$ and $W_{2}$ be two fragments
with nonzero intersection. By the previous lemma, we derive%
\[
c(W_{1}+W_{2})+c(W_{1}\cap W_{2})\leq c(W_{1})+c(W_{2})\leq2\kappa.
\]
Since $W_{1}+W_{2}$ and $W_{1}\cap W_{2}$ are finite-dimensional and not
reduced to $\{0\}$, we must have $c(W_{1}+W_{2})\geq\kappa$ and $c(W_{1}\cap
W_{2})\geq\kappa$.\ Hence $c(W_{1}+W_{2})=c(W_{1}\cap W_{2})=\kappa$. This
means that $W_{1}+W_{2}$ and $W_{1}\cap W_{2}$ are also fragments. If we
assume now that $W_{1}$ and $W_{2}$ are atoms, we obtain that $W_{1}=W_{2}$ or
$W_{1}\cap W_{2}=\{0\}$.

\begin{proposition}
\label{Prop_connect}Let $K$ be a division ring containing $\mathrm{k}$ in its
center and let $V$ be a finite-dimensional $\mathrm{k}$-subspace .

\begin{enumerate}
\item There exists a unique atom $H$ containing $1$ for $V$.

\item This atom is a division ring containing $\mathrm{k}$ in its center.

\item Moreover the atoms of $V$ are the right modules $xH$ where $x$ runs over
$K^{\ast}$.
\end{enumerate}
\end{proposition}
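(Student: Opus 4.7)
My approach will be to exploit two facts established just before the statement: left translation by any $x \in K^{\ast}$ preserves both $c$ and dimension, so it sends atoms to atoms; and two atoms with nonzero intersection must coincide. The proposition is then essentially a formal consequence of these together with the invertibility of nonzero elements in $K$. For assertion (1), I would fix any atom $W$ (which exists because $\lambda < 1$ ensures $c$ attains its infimum on a discrete set of positive values), pick any nonzero $w \in W$, and observe that $w^{-1}W$ is an atom containing $w^{-1}w = 1$; this yields existence. For uniqueness, if $H_{1}$ and $H_{2}$ are both atoms containing $1$, then $1 \in H_{1} \cap H_{2}$ is nonzero, so the dichotomy forces $H_{1} = H_{2}$. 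Write $H$ for this unique atom.

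For assertion (2), let $h \in H$ be any nonzero element. Then $h^{-1} \in K^{\ast}$, and $h^{-1}H$ is an atom containing $h^{-1} \cdot h = 1$; by the uniqueness from (1), $h^{-1}H = H$, equivalently $hH = H$. Two conclusions follow immediately: first, $h^{-1} = h^{-1} \cdot 1 \in h^{-1}H = H$, so $H$ is closed under inversion; second, for $h_{1},h_{2} \in H$ with $h_{1} \neq 0$ we have $h_{1}h_{2} \in h_{1}H = H$, so $H$ is closed under multiplication. Combined with $1 \in H$, this makes $H$ a sub-division ring of $K$. Since $\mathrm{k}$ lies in the center of $K$ and $H$ is a $\mathrm{k}$-subspace containing $1$, we get $\mathrm{k} = \mathrm{k} \cdot 1 \subseteq H$, and $\mathrm{k}$ remains central in $H$ automatically.

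For assertion (3), any atom $W$ contains a nonzero $w$, and then $w^{-1}W$ is an atom through $1$, hence equals $H$ by uniqueness; so $W = wH$. Conversely, every $xH$ with $x \in K^{\ast}$ is a left translate of the atom $H$, hence itself an atom, and $xH$ carries a natural right $H$-module structure since $H$ is a division ring. The main obstacle, such as it is, lies in extracting the multiplicative closure of $H$ from the purely additive minimality condition defining atoms; the lever is the identity $h^{-1}H = H$ provided by uniqueness, which crucially relies on $h^{-1}$ existing in $K$. This is precisely the point that would fail in the general associative algebra setting of Section \ref{Sec_alge}, consistent with the remarks there that Kneser-type arguments do not transfer easily.
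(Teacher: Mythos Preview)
Your proof is correct and follows essentially the same route as the paper: both obtain existence of an atom through $1$ by left-translating an arbitrary atom, deduce uniqueness from the dichotomy on intersections, and then use $h^{-1}H = H$ (via that uniqueness) to get multiplicative closure. The only minor difference is that you extract closure under inversion directly from $h^{-1} \in h^{-1}H = H$, whereas the paper appeals to Lemma~\ref{lem_stab} (finite-dimensionality plus surjectivity of left multiplication); your version is slightly more self-contained but not a genuinely different argument.
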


\begin{proof}
Since there exists at least one atom and the left translate of any atom is an
atom, there exists one atom $H$ containing $1$. Now, this atom must be unique
because atoms are equal or with an intersection reduced to $\{0\}$. In
particular, for any $x\in K$, $H=xH$ or $H\cap xH=\{0\}$. We claim that this
implies that $H$ is a division ring. Indeed, for any $h\in H$, $H\cap h^{-1}H$
contains $1$ for $1\in H$.\ Therefore $h^{-1}H=H$ and $H=hH$. So $H$ is stable
under multiplication. We then deduce that $H$ is a division ring as in the
proof of Lemma \ref{lem_stab}. Finally, given any atom $W$ of $V$, we must
have $w^{-1}W=H$ for any nonzero $w\in W$ since $H$ is the unique atom
containing $1$ and $w^{-1}H$ is an atom.
\end{proof}

\subsection{Tao's theorem for division rings}

\label{subsec-linTao}We say that $V=\mathrm{k}\langle A\rangle$, where $A$ is
a finite subset of $K^{\ast}$, is a \emph{space of small doubling}, when
$\dim_{\mathrm{k}}(A^{2})=O(\dim_{\mathrm{k}}(A)).\;$Simplest examples of
spaces of small doubling are the spaces $V=\mathrm{k}\langle A\rangle$
containing $1$ and such that $\dim_{\mathrm{k}}(A^{2})=\dim_{\mathrm{k}}(A)$.
Then by Lemma \ref{lem_stab}, $V$ is a division ring containing $\mathrm{k}$.
In general, a space of small doubling $\mathrm{k}\langle A\rangle$ is not a
division ring and neither a left nor right $H$-module for a division ring
$\mathrm{k}\subseteq H\subseteq K$. The following theorem, which is the linear
version of Theorem 1.2 in \cite{Tao}, permits to study the spaces of small
doubling in $K$.

\begin{theorem}
\label{Th_linTao}Let $K$ be a division ring containing the field $\mathrm{k}$
in its center. Consider finite-dimensional $\mathrm{k}$-subspaces $V$ and $W$
of $K$ such that $\dim_{\mathrm{k}}(W)\geq\dim_{\mathrm{k}}(V)$ and
$\dim_{\mathrm{k}}(WV)\leq(2-\varepsilon)\dim_{\mathrm{k}}(V)$ for some real
$\varepsilon$ such that $0<\varepsilon<2$. Then one of the following
statements holds:

\begin{itemize}
\item There exists a division ring $H$ containing $\mathrm{k}$ such that
$\dim_{\mathrm{k}}(H)\leq(\frac{2}{\varepsilon}-1)\dim_{\mathrm{k}}(V)$, and
$V$ is contained in a left module $Hx$ with $x\in K^{\ast}$.

\item There exists a division ring $H$ containing $\mathrm{k}$ such that
$\dim_{\mathrm{k}}(H)\leq(\frac{2}{\varepsilon}-1)/(\frac{2}{\varepsilon
}+1)\dim_{\mathrm{k}}(V)$, and a finite subset $X$ of $K^{\ast}$ with
$\left\vert X\right\vert \leq\frac{2}{\varepsilon}-1$ such that $V\subseteq
\bigoplus_{x\in X}$ $Hx$.
\end{itemize}
\end{theorem}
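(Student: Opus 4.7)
I apply the Hamidoune-style connectivity defined just above to $V$ with the parameter $\lambda:=1-\varepsilon/2$, which is strictly less than $1$ since $0<\varepsilon<2$. Because $\lambda<1$, atoms exist; by Proposition~\ref{Prop_connect} the unique atom $H$ of $V$ containing $1$ is a division ring containing $\mathrm{k}$ in its center. This $H$ will serve as the division ring in both alternative conclusions, with the decomposition of $HV$ via Lemma~\ref{lemma_stab} providing the modules $Hx$.

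First I bound the connectivity $\kappa:=c(H)$ using $W$ as a test subspace. The hypotheses $\dim_{\mathrm{k}}(W)\geq\dim_{\mathrm{k}}(V)$ and $\dim_{\mathrm{k}}(WV)\leq(2-\varepsilon)\dim_{\mathrm{k}}(V)$ give
\[
\kappa\leq c(W)=\dim_{\mathrm{k}}(WV)-\lambda\dim_{\mathrm{k}}(W)\leq(2-\varepsilon-\lambda)\dim_{\mathrm{k}}(V)=(1-\varepsilon/2)\dim_{\mathrm{k}}(V).
\]
Combined with the trivial inequality $\kappa\geq(1-\lambda)\dim_{\mathrm{k}}(H)=(\varepsilon/2)\dim_{\mathrm{k}}(H)$ from (\ref{maj_connecti}), this produces the upper bound $\dim_{\mathrm{k}}(H)\leq(2/\varepsilon-1)\dim_{\mathrm{k}}(V)$ required for the first alternative. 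The crucial second estimate is a \emph{lower} bound on $\dim_{\mathrm{k}}(H)$, obtained from the inclusion $V\subseteq HV$ (valid since $1\in H$): writing $\dim_{\mathrm{k}}(HV)=\kappa+\lambda\dim_{\mathrm{k}}(H)$ and inserting the bound on $\kappa$,
\[
\dim_{\mathrm{k}}(V)\leq\dim_{\mathrm{k}}(HV)\leq(1-\varepsilon/2)\dim_{\mathrm{k}}(V)+(1-\varepsilon/2)\dim_{\mathrm{k}}(H),
\]
which rearranges to $\dim_{\mathrm{k}}(H)\geq\varepsilon/(2-\varepsilon)\cdot\dim_{\mathrm{k}}(V)=\dim_{\mathrm{k}}(V)/(2/\varepsilon-1)$.

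Next I decompose $HV$. Since $H$ is a division ring with $H(HV)=HV$, Lemma~\ref{lemma_stab} produces a finite subset $X\subseteq K^{\ast}$ with $HV=\bigoplus_{x\in X}Hx$, and $V\subseteq HV=\bigoplus_{x\in X}Hx$. The number of summands $|X|=\dim_{\mathrm{k}}(HV)/\dim_{\mathrm{k}}(H)$ is then directly bounded using both bounds on $\dim_{\mathrm{k}}(H)$:
\[
|X|\leq(1-\varepsilon/2)\left(\frac{\dim_{\mathrm{k}}(V)}{\dim_{\mathrm{k}}(H)}+1\right)\leq(1-\varepsilon/2)\cdot\frac{2}{\varepsilon}=\frac{2}{\varepsilon}-1,
\]
where the first inequality comes from $\dim_{\mathrm{k}}(HV)\leq(1-\varepsilon/2)(\dim_{\mathrm{k}}(V)+\dim_{\mathrm{k}}(H))$, and the second uses $\dim_{\mathrm{k}}(V)/\dim_{\mathrm{k}}(H)\leq 2/\varepsilon-1$.

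Finally I split on $|X|$. If $|X|=1$, then $V\subseteq Hx$ for some $x\in K^{\ast}$, giving the first alternative with the bound $\dim_{\mathrm{k}}(H)\leq(2/\varepsilon-1)\dim_{\mathrm{k}}(V)$. If $|X|\geq2$, then $\dim_{\mathrm{k}}(HV)\geq2\dim_{\mathrm{k}}(H)$ yields $\kappa+\lambda\dim_{\mathrm{k}}(H)\geq2\dim_{\mathrm{k}}(H)$, hence $\kappa\geq(1+\varepsilon/2)\dim_{\mathrm{k}}(H)$; combined with $\kappa\leq(1-\varepsilon/2)\dim_{\mathrm{k}}(V)$ this sharpens the bound to $\dim_{\mathrm{k}}(H)\leq(2-\varepsilon)/(2+\varepsilon)\dim_{\mathrm{k}}(V)=(2/\varepsilon-1)/(2/\varepsilon+1)\dim_{\mathrm{k}}(V)$, giving the second alternative (the uniform bound $|X|\leq2/\varepsilon-1$ having been established above). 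The delicate point is the production of the dual upper and lower bounds on $\dim_{\mathrm{k}}(H)$: the connectivity inequality gives one, and the inclusion $V\subseteq HV$ gives the other, and it is their combination that forces the coset count $|X|$ to be controlled by $2/\varepsilon-1$.
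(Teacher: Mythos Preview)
Your proof is correct and follows essentially the same route as the paper's: choose $\lambda=1-\varepsilon/2$, use $W$ as a test space to bound $\kappa\leq(1-\varepsilon/2)\dim_{\mathrm{k}}(V)$, take $H$ to be the atom containing $1$ from Proposition~\ref{Prop_connect}, and then combine the connectivity inequalities with the decomposition of $\mathrm{k}\langle HV\rangle$ from Lemma~\ref{lemma_stab}. The only cosmetic difference is ordering: you first decompose $\mathrm{k}\langle HV\rangle$ and bound $|X|$ unconditionally (via the lower bound $\dim_{\mathrm{k}}(H)\geq\dim_{\mathrm{k}}(V)/(2/\varepsilon-1)$), then split on $|X|=1$ versus $|X|\geq 2$; the paper first splits on whether $V$ lies in a single $Hx$, and only in the second branch derives the sharper bound on $\dim_{\mathrm{k}}(H)$ and the bound $|X|\leq 2/\varepsilon-1$. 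The underlying inequalities are identical.
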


\begin{proof}
We apply linear Hamidoune connectivity with $\lambda=1-\frac{\varepsilon}{2}$.
We have by (\ref{maj_connecti}) $c(U)\geq\frac{\varepsilon}{2}\dim
_{\mathrm{k}}(U)$ for any $\mathrm{k}$-subspace $U$.\ This can be rewritten as%
\begin{equation}
\dim_{\mathrm{k}}(U)\leq\frac{2}{\varepsilon}c(U). \label{maj3}%
\end{equation}
We also get%
\[
c(W):=\dim_{\mathrm{k}}(WV)-(1-\frac{\varepsilon}{2})\dim_{\mathrm{k}}%
(W)\leq(2-\varepsilon)\dim_{\mathrm{k}}(V)-(1-\frac{\varepsilon}{2}%
)\dim_{\mathrm{k}}(V)=(1-\frac{\varepsilon}{2})\dim_{\mathrm{k}}(V).
\]
since $\dim_{\mathrm{k}}(WV)\leq(2-\varepsilon)\dim_{\mathrm{k}}(V)$ and
$\dim_{\mathrm{k}}(W)\geq\dim_{\mathrm{k}}(V)$. By Proposition
\ref{Prop_connect}, the unique atom containing $1$ is a division ring $H$.\ By
definition of an atom, we should have%
\[
\kappa=c(H)\leq c(W)\leq(1-\frac{\varepsilon}{2})\dim_{\mathrm{k}}(V).
\]
We therefore obtain, by using (\ref{maj3}) with $U=H$, that%
\[
\dim_{\mathrm{k}}(H)\leq\frac{2}{\varepsilon}c(H)\leq\frac{2}{\varepsilon
}c(W)\leq(\frac{2}{\varepsilon}-1)\dim_{\mathrm{k}}(V).
\]
If $V$ is contained in a left module $Hx$, we are done. Assume $V$ intersects
at least two such left $H$-modules. By using that $c(H)=\dim_{\mathrm{k}%
}(HV)-(1-\frac{\varepsilon}{2})\dim_{\mathrm{k}}(H)$ and the previous
inequality $c(H)\leq(1-\frac{\varepsilon}{2})\dim_{\mathrm{k}}(V)$, we get%
\begin{equation}
\dim_{\mathrm{k}}(HV)\leq(1-\frac{\varepsilon}{2})\dim_{\mathrm{k}%
}(V)+(1-\frac{\varepsilon}{2})\dim_{\mathrm{k}}(H). \label{ineq3}%
\end{equation}
Since $V$ intersects at least two left $H$-modules, we must have
$\dim_{\mathrm{k}}(HV)\geq2\dim_{\mathrm{k}}(H)$. By using (\ref{ineq3}), this
gives
\[
\dim_{\mathrm{k}}(H)\leq\frac{1-\frac{\varepsilon}{2}}{1+\frac{\varepsilon}%
{2}}\dim_{\mathrm{k}}(V)<\dim_{\mathrm{k}}(V).
\]
We can also bound $\dim_{\mathrm{k}}(V)$ by $\dim_{\mathrm{k}}(HV)$ in
(\ref{ineq3}). This yields
\begin{equation}
\dim_{\mathrm{k}}(HV)\leq(\frac{2}{\varepsilon}-1)\dim_{\mathrm{k}}(H)\text{.}
\label{ineq4}%
\end{equation}
Now $\mathrm{k}\langle HV\rangle$ is left $H$-invariant and finite-dimensional
because $\dim_{\mathrm{k}}(HV)\leq\dim_{\mathrm{k}}(H)\dim_{\mathrm{k}}%
(V)\leq\dim_{\mathrm{k}}(V)^{2}$. By Lemma \ref{lemma_stab}, there thus exists
a finite subset $X$ of $K^{\ast}$ such that
\[
\mathrm{k}\langle HV\rangle=\bigoplus_{x\in X}Hx\text{.}%
\]
By (\ref{ineq4}), we should have $\left\vert X\right\vert \leq(\frac
{2}{\varepsilon}-1)$. Moreover $V\subseteq\mathrm{k}\langle HV\rangle$, which
concludes the proof.
\end{proof}

\bigskip

\noindent\textbf{Remark:} When $\dim_{\mathrm{k}}(A^{2})\leq(2-\varepsilon
)\dim_{\mathrm{k}}(A)$, we can apply Theorem \ref{Th_linTao} with
$V=W=\mathrm{k}\langle A\rangle$ and obtain information on the covering of $V$
by left $H$-modules. This can be interpreted as a description of subspaces
with small doubling similar to the description of sets with small doubling
obtained in \cite{Tao}.

\bigskip

\noindent\textbf{Acknowledgments:} The author thanks both anonymous referees
for their very careful reading of the paper and their many valuable
corrections and comments.

\bigskip

\vfill
\noindent Laboratoire de Math\'ematiques et Physique Th\'eorique (UMR CNRS
6083)\newline Universit\'e Fran\c{c}ois-Rabelais, Tours \newline
F\'ed\'eration de Recherche Denis Poisson - CNRS\newline Parc de Grandmont,
37200 Tours, France. \newline\noindent{cedric.lecouvey@lmpt.univ-tours.fr}%
\newline


\begin{thebibliography}{99}                                                                                               %


\bibitem {Am}\textsc{S. A. Amitsur}, \textit{Finite subgroups of division
rings, }Trans. Ams. \textbf{80} (1955), 361-386 .

\bibitem {Die}\textsc{G. T. Diderrich}, \textit{On Kneser's addition theorem
in groups,} Proc. Ams.\ \textbf{38} (1973), 443-451.

\bibitem {EL2}\textsc{S. Eliahou.\ }and \textsc{C. Lecouvey,} \textit{On
linear versions of some addition theorems, }Linear Algebra and multilinear
algebra, \textbf{57} (2009), 759-775.

\bibitem {Xian}\textsc{X. D.\ Hou, K.\ H. Leung and Xiang. Q,} \textit{A
generalization of an addition theorem of Kneser}, Journal of Number Theory
\textbf{97} (2002), 1-9.

\bibitem {Hami3}\textsc{Y. O.\ Hamidoune, }\textit{Some additive applications
of the isoperimetric approach,} Ann. Inst. Fourier, \textbf{58} (2008), 2007-2036.

\bibitem {Hami}\textsc{Y. O.\ Hamidoune, }\textit{Kneser theorem and some
related questions, }preprint (2010).

\bibitem {Hami2}\textsc{Y. O.\ Hamidoune, }\textit{On the connectivity of
Cayley digraphs, }Europ. J. Comb., \textbf{5} (1984), 309-312.

\bibitem {Hami4}\textsc{Y. O.\ Hamidoune, }\textit{On a subgroup contained in
words with a bounded length, }Discrete Math.\ \textbf{103} (1992), 171-176.

\bibitem {Hou}\textsc{X. D.\ Hou, }\textit{On a vector space analogue of
Kneser's theorem}, Linear Algebra and its Applications \textbf{426} (2007) 214-227.

\bibitem {Kain}\textsc{F.\ Kainrath,} \textit{On local half-factorial orders},
in Arithmetical Properties of Commutative Rings and Monoids, Chapman \&
Hall/CRC, Lect. Notes. Pure Appl. Math. 241-316 (2005).

\bibitem {Kem}\textsc{J. H.\ B.\ Kemperman,} \textit{On complexes in a
semigroup}, Indag.\ Math.\ \textbf{18} (1956), 247-254.

\bibitem {L}\textsc{S.\ Lang,} \textit{Algebra, }Graduate Texts in
Mathematics, Springer-Verlag New York Inc (2005).

\bibitem {Madi}\textsc{M. Madiman. A. Marcus, P. Tetali,} \textit{Entropy and
set cardinality inequalities for partition determined functions, with
applications to sumsets}, Random Structure and Algorithms, DOI:
10.1002/rsa.20385 (2011).

\bibitem {Nath}\textsc{M. B. Nathanson,} \textit{Additive Number Theory:
Inverse Problems and the Geometry of Sumsets}, Graduate Text in Mathematics
\textbf{165}, Springer-Verlag New York (1996).

\bibitem {Ols}\textsc{J. E.\ Olson,} \textit{On the sum of two sets in a
group}, J. Number Theory \textbf{18} (1984), 110-120.

\bibitem {Petr}\textsc{G. Petridis,} \textit{New proofs of Pl\"{u}nnecke-type
estimates for product sets in groups, }preprint 2011 arXiv: 1101.3507, to
appear in Combinatorica, DOI 10.1007/s00493-012-2818-5 (2013).

\bibitem {Ruz}\textsc{I. Z.\ Ruzsa,} \textit{Sumsets and structure},
Combinatorial Number Theory and additive group theory, Springer New York (2009).

\bibitem {Tao}\textsc{T. Tao,} \textit{Non commutative sets of small
doublings,} preprint 2011 arXiv: 11062267 (2011).

\bibitem {Tao2}\textsc{T. Tao,} \textit{Sumset and inverse sumset theorems for
Shannon entropy}, Combinatorics Probability and Computing \textbf{19} (2010), 603-639.

\bibitem {Tao3}\textsc{T. Tao,} \textit{Product set estimates for
non-commutative groups, }Combinatorica 28 (2009), 547-594.
\end{thebibliography}
\end{document}